\newtheorem{theorem}{Theorem}[section]
\newtheorem{lemma}[theorem]{Lemma}
\newtheorem{corollary}[theorem]{Corollary}
\newtheorem{definition}[theorem]{Definition}
\newtheorem{remark}[theorem]{Remark}
\newcommand{\rmd}{\mathrm{d}}
\newcommand{\mA}{\mathcal{A}}
\newcommand{\mB}{\mathcal{B}}
\newcommand{\mF}{\mathcal{F}}
\newcommand{\mL}{\mathcal{L}}
\newcommand{\mFR}{\mathcal{F}_{\mathbb{R}^3}}
\newcommand{\mFS}{\mathcal{F}_{S^2}}
\newcommand{\mG}{\mathcal{G}}
\newcommand{\mM}{\mathcal{M}}
\newcommand{\mS}{\mathcal{S}}
\newcommand{\tU}{\tilde{U}}
\newcommand{\tW}{\tilde{W}}
\newcommand{\tK}{\tilde{K}}
\newcommand{\tQ}{\tilde{Q}}
\newcommand{\tlambda}{\tilde{\lambda}}
\newcommand{\tUps}{\tilde{\Upsilon}}
\newcommand{\bc}{\mathbf{c}}
\newcommand{\bx}{\mathbf{x}}
\newcommand{\bR}{\mathbf{R}}
\newcommand{\be}{\mathbf{e}}
\newcommand{\by}{\mathbf{y}}
\newcommand{\bn}{\mathbf{n}}
\newcommand{\ba}{\mathbf{a}}
\newcommand{\bd}{\mathbf{d}}
\newcommand{\bD}{\mathbf{D}}
\newcommand{\bN}{\mathbf{N}}
\newcommand{\bY}{\mathbf{Y}}
\newcommand{\bM}{\mathbf{M}}
\newcommand{\bu}{\mathbf{u}}
\newcommand{\bhw}{\mathbf{\hat{w}}}
\newcommand{\hW}{\hat{W}}
\newcommand{\hU}{\hat{U}}
\newcommand{\hK}{\hat{K}}
\newcommand{\hG}{\hat{G}}
\newcommand{\ul}{\mathbf}
\newcommand{\OmegaBF}{\mbox{\boldmath$\Omega$}}
\newcommand{\sumlm}{\sum_{l=0}^{\infty} \sum_{m=-l}^{l}}
\newcommand{\summl}{\sum_{m=-\infty}^{\infty} \sum_{l=|m|}^{\infty}}
\newcommand{\Philmom}{\Phi_{l,m}^{\bomega}}
\newcommand{\PhilmomE}{\Phi_{l,m}^{E,\bomega}}
\newcommand{\Psilmom}{\Psi_{l,m}^{\bomega}}
\newcommand{\Psilpmpom}{\Psi_{l',m'}^{\bomega}}
\newcommand{\lambdalmom}{\lambda^{l,m}_{\bomega}}
\newcommand{\tlambdalmrho}{\tilde{\lambda}^{l,m}_{\rho}}
\newcommand{\lambdalmr}{\lambda^{l,m}_{r}}
\newcommand{\lambdalpmpr}{\lambda^{l',m'}_{r}}
\newcommand{\ylmom}{Y_{\bomega}^{l,m}}
\newcommand{\quot}{\mathbb{R}^3\rtimes S^2}
\newcommand{\QDa}{Q^{\bD,\ba}}
\newcommand{\Ylm}{Y^{l,m}}
\newcommand{\Ylpm}{Y^{l',m}}
\newcommand{\Slm}{S^{l,m}}
\newcommand{\Slmrho}{S^{l,m}_\rho}
\newcommand{\bomega}{{\boldsymbol{\omega}}}
\newcommand{\R}{\mathbb{R}}
\begin{document}

\title{New Exact and Numerical Solutions of the (Convection-)Diffusion Kernels on SE(3)
}


\author{J.M. Portegies}
\address{}
\address{Department of Mathematics and Computer Science, CASA,
              Eindhoven University of Technology, \\
              Tel.: +31-40-2478634,
              \url{J.M.Portegies@tue.nl}}
              
\author{R. Duits}
\address{Department of Mathematics and Computer Science, CASA,
              Eindhoven University of Technology, \\
              Tel.: +31-40-2472859,
              \url{R.Duits@tue.nl}}


\date{}


\begin{abstract}
We consider hypo-elliptic diffusion and convection-diffusion on $\quot$, the quotient of the Lie group of rigid body motions SE(3) in which group elements are equivalent if they are equal up to a rotation around the reference axis. We show that we can derive expressions for the convolution kernels in terms of eigenfunctions of the PDE, by extending the approach for the SE(2) case. This goes via application of the Fourier transform of the PDE in the spatial variables, yielding a second order differential operator. We show that the eigenfunctions of this operator can be expressed as (generalized) spheroidal wave functions. The same exact formulas are derived via the Fourier transform on SE(3). We solve both the evolution itself, as well as the time-integrated process that corresponds to the resolvent operator. Furthermore, we have extended  a standard numerical procedure from SE(2) to SE(3) for the computation of the solution kernels that is directly related to the exact solutions. Finally, we provide a novel analytic approximation of the kernels that we briefly compare to the exact kernels.
\end{abstract}

\maketitle

\section{Introduction}
\subsection{Background and Motivation}
The properties of the kernels of the hypo-elliptic (convection-)diffusions on Lie groups, in particular the Euclidean motion group, have been of interest in fields such as image analysis \cite{august_curve_2001,august_sketches_2003,citti_cortical_2006,van_almsick_context_2007,momayyez_3d_2009,duits_left-invariant_2010-2}, robotics \cite{chirikjian_stochastic_2011} and harmonic analysis \cite{ter_elst_weighted_1998,chirikjian_engineering_2000,agrachev_intrinsic_2009,sharma_left-invariant_2015}. In \cite{mumford_elastica_1994} Mumford posed the problem of finding solutions of the kernels for the convection-diffusion process (direction process) on the roto-translation group SE(2). Subsequently, in \cite{thornber_characterizing_1997} analytic approximations were provided. Several numerical approaches were provided in \cite{august_curve_2001,zweck_euclidean_2004,sanguinetti_invariant_2011,zhang_numerical_2016,chirikjian_engineering_2000}. Exact solutions were derived in \cite{duits_explicit_2008}.

In \cite{agrachev_intrinsic_2009}, a Fourier-based method is presented to compute the kernel of the hypo-elliptic heat equation on unimodular Lie groups and examples are provided for various Lie groups. Three approaches to derive the exact solutions for the heat kernel for SE(2) have been proposed in SE(2) in \cite{duits_line_2009,duits_left-invariant_2010-2}, of which the first is equivalent to the approach in \cite{agrachev_intrinsic_2009} and provides a solution in terms of a Fourier series. This approach will be extended in this paper to SE(3) and the connection to the Fourier transform on SE(3) will be made. The second approach of \cite{duits_line_2009,duits_left-invariant_2010-2} provides a solution in terms of rapidly decaying functions and in the third approach this series is explicitly computed in terms of Mathieu functions.

In image analysis of 2D images, both the convection-diffusion and the diffusion PDEs can be used for crossing-preserving enhancement of elongated structures, after the image is lifted from $\mathbb{R}^2$ to the group SE(2) via an invertible orientation score \cite{franken_crossing-preserving_2009}, other types of lifting operators \cite{citti_cortical_2006,citti_sub-riemannian_2015}, or using a semi-discrete variant \cite{prandi_image_2015}. Enhancement of such structures is important in for example retinal imaging to improve the detection of blood vessels, which is challenging due to the low contrast and noise in the images.

Recently, both the diffusion and the convection-diffusion PDE have become of interest for enhancement of diffusion-weighted Magnetic Resonance Imaging (dMRI). The diffusion orientation distribution function (ODF) or the fiber orientation distribution (FOD) function can be considered to be a function on a domain in $\mathbb{R}^3 \times S^2$, indicating at each position the estimated diffusion profile or distribution of fibers on the sphere $S^2$. Such functions are usually position-wise estimated from the dMRI data. The advantage of processing with these PDEs is that they induce a better alignment between local orientations and their surrounding. In \cite{momayyez_3d_2009} the 3D extension of Mumford's direction process was used enhance dMRI data with the aid of stochastic completion fields. In \cite{reisert_about_2012} it is shown that the convection-diffusion kernel can be used to obtain asymmetric, regularized FODs. The practical advantages of the diffusion process for regularization of dMRI data, in particular better fiber tractography results and improved connectivity measures, are given in \cite{prckovska_extrapolation_2010,prckovska_contextual_2015,portegies_improving_2015,meesters_fast_2016}.

Although finite-difference implementations \cite{creusen_numerical_2011} and kernel approximations \cite{duits_left-invariant_2010} of the PDEs on $\quot$ (for which the formal definition follows later) exist, so far no exact expressions are known. The derivation of these exact solutions will be one of the main results of this paper. Other contributions of this work are summarized at the end of this introduction. We first provide more details on the mathematical setting, established in previous work \cite{duits_left-invariant_2010}.

\subsection{Left-Invariant Convection-Diffusion Operators on SE(3)}\label{se:leftinvariantoperators}
Let $\tU: SE(3) \rightarrow \mathbb{R}^+$ be a square integrable function defined on the Lie group of rigid body motions $SE(3) = \mathbb{R}^3 \rtimes SO(3)$, which is the semi-direct product of the translation group $\mathbb{R}^{3}$ and the rotation group $SO(3)$. We consider $\tU$ to be a given input, that requires regularization or enhancement of certain features. We use a tilde throughout the paper to indicate functions and operators on the group $SE(3)$ (in contrast to functions/operators on a group quotient later in the paper, that we denote without a tilde). A possible approach is to use particular evolution equations that are special cases of the following general evolution process:

\begin{equation}
\begin{cases}
\dfrac{\partial \tW(g,t)}{\partial t} = \tQ \tW(g,t), & \qquad \text{ for all } g \in \text{SE(3)}, t \geq 0,\\
\tW(g,0) = \tU(g), & \qquad \text{ for all } g \in \text{SE(3)}.
\end{cases}
\end{equation}

Here $\tQ$ is the generator of the evolution on the group, where we restrict ourselves to generators such that the evolution becomes a linear, second order convection-diffusion process. Moreover, $\tQ$ should be a left-invariant operator and is composed of the left-invariant differential operators $\underline{\mA} = (\mA_1, \dots, \mA_6)$. These vector fields are obtained via a pushforward $\mA_i|_g = (L_g)_* A_i$, where $\left\{ A_i \right\}_{i=1}^6$ is a choice of basis in the tangent space $T_e(SE(3))$ at the unity element $e = (0, I)$, where $A_1, A_2, A_3$ are spatial tangent vectors and $A_4, A_5, A_6$ are rotational tangent vectors. Here $L_g$ is the left-multiplication $L_g q = gq$.
If $\tQ$ is a generator of a linear convection-diffusion PDE, then its general form can be written as:

\begin{equation}\label{eq:generator}
\tQ^{\bD,\ba} = \sum_{i=1}^6 -a_i \mA_i + \sum_{i,j = 1}^6 \mA_i D_{ij} \mA_j.
\end{equation}
Here $\bD = [D_{ij}] \in \mathbb{R}^{6 \times 6}$ is a symmetric positive semi-definite $6 \times 6$-matrix, and $\ba = (a_i)_{i=1}^6 \in \mathbb{R}^6$. The explicit formulas for the $\mA_i$ require two charts \cite{duits_left-invariant_2010}, but these are not of crucial importance in this work, because of the particular configuration in which the $\mA_i$'s occur in the two generators (two special cases of (\ref{eq:generator})) that we will consider. We use $\tUps_t(\tU) = \tW(\cdot,t)$ to denote the solution of the above evolution equation, so $\tUps_t : \mathbb{L}_2(SE(3)) \rightarrow \mathbb{L}_2 (SE(3))$.
The evolution can then formally be solved by convolution with the integrable solution kernel or impulse response $\tK_t:SE(3) \rightarrow \mathbb{R}^+$:

\begin{equation}
\begin{aligned}
\tUps_t(\tU)(g) = (e^{t \tQ^{\bD,\ba}} \tU)(g) = (\tK_t \ast_{SE(3)} \tU)(g) = \int \limits_{SE(3)} \tK_t(h^{-1}g)\tU(h) \; \rmd h,
\end{aligned}
\end{equation}
where we use the Haar measure on SE(3), which is the product of the Lebesgue measure on $\mathbb{R}^3$ and the Haar measure on SO(3).

\subsection{The Space of Coupled Positions and Orientations \texorpdfstring{$\quot$}{R3xS2} Embedded in SE(3)}\label{sec:introse3}
From the dMRI application, we often encounter functions defined on $\mathbb{R}^3\times S^2$ instead of the functions on SE(3) as above. These functions are called Fiber Orientation Distributions (FODs) and they estimate at each position in the space $\mathbb{R}^3$ the probability of having a fiber in a certain orientation. As shown in \cite{portegies_improving_2015}, to appropriately regularize these functions, a coupling between positions and orientations is required. This can be obtained by embedding the space $\mathbb{R}^3 \times S^2$ in SE(3). To this end, we define the space of coupled positions and orientations as a Lie group quotient in SE(3):

\begin{equation}
\quot := SE(3)/(\{\mathbf{0}\} \times SO(2)).
\end{equation}
For two elements $g = (\bx, \bR), g'=(\bx',\bR') \in SE(3)$, the group product is defined as $g g' = (\bx + \bR \bx', \bR \bR')$. Because this group product influences the group action in $\quot$, we use the semi-product notation $\rtimes$ (even though this is usually reserved for the semi-direct product of groups). The group action $\odot$ of $g \in SE(3)$ onto $(\by,\bn)  \in \mathbb{R}^{3} \times S^{2}$ is defined by
\begin{equation} \label{odot}
\begin{array}{l}
g \odot (\by,\bn)=(\bx,\bR) \odot(\by,\bn):=(\bx+ \bR \by, \bR \bn).
\end{array}
\end{equation}
Within the quotient structure $\quot$ two elements $g, g'$ (as above) are  equivalent if

\begin{equation}\label{eq:equivalencerelation}
\begin{array}{l}
g' \sim  g \Leftrightarrow h:=(g')^{-1}g \in \{\mathbf{0}\} \times SO(2)  \Leftrightarrow \bx = \bx' \textrm{ and } \exists \; \alpha \in (0,2\pi]  :\; (\bR')^{-1}\bR=\bR_{\be_{z},\alpha} \in SO(2),
\end{array}
\end{equation}
with $\be_z = (0,0,1)^T$ the reference axis\footnote{Here and throughout the paper, we use the notation $\bR_{\bn,\phi}$ for 3D rotations that perform a counter-clockwise rotation about axis $\bn \in S^2$ with angle $\phi \in  [0,2\pi]$.}. 
Using the group action, equivalence relation (\ref{eq:equivalencerelation}) amounts to:
\begin{equation}
g' \sim g \Leftrightarrow g' \odot (\mathbf{0},\be_z)= g \odot (\mathbf{0},\be_z).
\end{equation}



Thereby, an arbitrary element in $\mathbb{R}^{3} \rtimes S^{2}$ can be considered as the equivalence class of all rigid body motions that map reference position and orientation $(\mathbf{0},\be_z)$ onto $(\bx,\bn)$  (with arbitrary $\bx \in \mathbb{R}^3$ and $\bn \in S^2$). Similar to the common identification of $S^{2}\equiv SO(3)/SO(2)$, we denote elements of the Lie group quotient $\mathbb{R}^{3}\rtimes S^{2}$ by $(\bx,\bn)$.


Functions $U: \quot \rightarrow \mathbb{R}^+$ are identified with functions $\tU: SE(3) \rightarrow \mathbb{R}^+$ by the relation

\begin{equation}\label{eq:quotientgroupconstraint}
U(\by,\bn) = \tU(\by, \bR_\bn),
\end{equation}
where $\bR_{\bn}$ is \emph{any} rotation matrix such that $\bR_{\bn} \be_z = \bn$ (not to be confused with the notation $\bR_{\bn,\phi}$, that specifies the rotation by its axis $\bn$ and angle of rotation $\phi$). This means that the functions $\tU$ that we consider have the invariance $\tU(\by,\bR) = \tU(\by,\bR \bR_{\be_z,\alpha})$, for all $\alpha \in (0, 2\pi]$. 

\subsection{Legal Diffusion and Convection-Diffusion Operators}\label{se:legaldiffusion}
For functions defined on the quotient $\quot$ the same machinery as in Section \ref{se:leftinvariantoperators} can be used, although some restrictions apply. In \cite{duits_morphological_2012} it is shown that operators on $\tU$ are legal (i.e., they correspond to well-defined operators on $U$ that commute with rotations/translations) if and only if the following holds:

\begin{alignat}{2}\label{def:legaloperator}
\tUps \circ \mL_g &= \mL_g \circ \tUps, & \qquad g \in SE(3)&, \\ \nonumber
\tUps &= \mathcal{R}_h \circ \tUps, & \qquad h = (\mathbf{0},\bR_{\be_z,\alpha}) \in \{\mathbf{0}\}\times SO(2)&,
\end{alignat}
with left-regular action $(\mL_g \tU) (g') = \tU(g^{-1} g')$ and right-regular action $(\mathcal{R}_g \tU)(g') = \tU(g' g)$. For the diffusion case, an equivalent statement is that the diffusion matrix $\ul{D}$ is invariant under conjugation with $\mathbf{Z}_{\alpha}= \ul{R}_{\ul{e}_z,\alpha} \oplus \ul{R}_{\ul{e}_z,\alpha} \in SO(6)$, i.e. $\mathbf{Z}_\alpha^{-1} \bD \mathbf{Z}_\alpha = \bD$. Among the few legal, left-invariant diffusion and convection-diffusion generators on $\quot$, recall (\ref{eq:generator}), are the pure hypo-elliptic diffusion case and the hypo-elliptic convection-diffusion case, for details see \cite{duits_left-invariant_2010,duits_morphological_2012}. The first case corresponds to the forward Kolmogorov equation for hypo-elliptic Brownian motion, the second case corresponds to the forward Kolmogorov equation for the direction process (in 3D), as illustrated in Fig.~\ref{fig:kernelvis}. We denote the evolution generators of these two cases on the group and on the quotient with $\tQ_i$ and $Q_i$, $i = 1,2$, respectively. They are defined as:

\begin{equation}\label{eq:CEoperatortilde}
\tQ_1 := D_{33} \mA_3^2 + (D_{44} \mA_4^2 + D_{55} \mA_5^2),
\end{equation}
with $D_{33}>0, D_{44} = D_{55} > 0$. The generator $Q_1$ acts on sufficiently smooth functions $\tU$ and can be identified with a generator $Q_1$ acting on functions $U$:

\begin{equation}\label{eq:CEoperator}
\boxed{Q_1 := D_{33}(\bn \cdot \nabla_{\mathbb{R}^3})^2 + D_{44} \Delta_{S^2}.}
\end{equation}
Here $\nabla_{\mathbb{R}^3}$ denotes the gradient on $\mathbb{R}^3$, and $\Delta_{S^2}$ is the Laplace-Beltrami operator on the sphere $S^2$. Also the following (hypo-elliptic) convection-diffusion generator can be identified with a legal generator on $\quot$:

\begin{equation}\label{eq:CCoperatortilde}
\tQ_2 := - \mA_3 + (D_{44} \mA_4^2 + D_{55} \mA_5^2),
\end{equation}
again with $D_{44} = D_{55}>0$. The corresponding generator $Q_2$ acting on sufficiently smooth functions $U$ is defined as:

\begin{equation}\label{eq:CCoperator}
\boxed{Q_2 := -(\bn \cdot \nabla_{\mathbb{R}^3}) + D_{44} \Delta_{S^2}.}
\end{equation}

\begin{remark}\label{rem:relationgroupquotientaction}
The way $\tQ_i$ and $Q_i$ act on functions $\tW$ and $W$, respectively, is related by the following identities:

\begin{equation}
\begin{aligned}
(\bn \cdot \nabla_{\mathbb{R}^3} W) (\by,\bn) &= (\mA_3 \tW)(\by,\bR_\bn),\\
(\Delta_{S^2} W)(\by,\bn) &= (\mA_4^2 +\mA_5^2)\tW(\by,\bR_\bn).
\end{aligned}
\end{equation}
As a result, the following relation holds:

\begin{equation}
(\tQ_i \tW)(\by,\bR_{\bn}) = (Q_i W)(\by,\bn), \qquad i = 1,2,
\end{equation}
regardless of the choice of rotation for $\bR_{\bn}$ (mapping $\be_z$ onto $\bn$).
\end{remark}

By the previous remark, we are no longer concerned with the left-invariant vector fields $\mA_i$, and we focus on the following two PDE systems using (\ref{eq:CEoperator}) and (\ref{eq:CCoperator}):

\begin{equation}\label{eq:differentialQ}
\boxed{\left\{\begin{aligned}
\frac{\partial}{\partial t} W(\by,\bn,t) &= (Q_i W)(\by,\bn,t),  \\
W(\by,\bn,0) &= U(\by,\bn),
\end{aligned} \right. }
\end{equation}
with $(\by,\bn) \in \quot$, $t \geq 0$, $i = 1,2$. In particular, we want to find an exact solution and approximations for the impulse response or convolution kernel of these PDEs, i.e., the solution with initial condition $U(\by,\bn) = \delta_{(\mathbf{0},\be_z)}(\by,\bn)$. We denote the linear and bounded evolution operator that maps $U(\by,\bn)$ on $W(\by,\bn,t)$ by $\Upsilon_t : \mathbb{L}_2(\quot) \rightarrow \mathbb{L}_2(\quot)$:
\begin{equation}
W(\by,\bn,t)=(\Upsilon_t U)(\by,\bn), \qquad \text{for all } (\by,\bn) \in \R^{3} \rtimes S^{2}.
\end{equation}

\subsection{Relation with Stochastic Processes, Time-Integrated Processes and Resolvent kernels}\label{se:stopro}
Any evolution equation corresponding to a generator $Q^{\bD,\ba}$ as above, is in fact a Fokker-Planck equation for the evolution of a probability density function. The (convection)-diffusion processes generated by $Q_1$ and $Q_2$ were called contour enhancement and contour completion in \cite{duits_left-invariant_2010}, respectively, because of the stochastic processes they relate to, see Fig. \ref{fig:kernelvis}. The random walks in the figure are simulated according to the formal definition of the stochastic processes in \ref{app:stochastics}. The convolution kernels of the PDEs can also be obtained with a Monte Carlo simulation by accumulating infinitely many random walks starting at $(\mathbf{0},\be_z)$ that obey the underlying stochastic process and have traveling time $t$, as illustrated in Fig. \ref{fig:kernelvis}. The figure also displays the SE(2) equivalents of the processes, because they are easier to visualize and interpret. The SE(2) process for contour completion is better known as Mumford's direction process \cite{mumford_elastica_1994} and finds its application in computer vision. 

\begin{remark}[Glyph field visualization]\label{remark:glyphfieldvis}
In Fig. \ref{fig:kernelvis} and in later figures in the paper, we make use of so-called glyph field visualizations of probability distributions $U$ on $\quot$. A glyph at a grid point $\by \in c \, \mathbb{Z}^3$, $c>0$, is given by the surface $\{\by + \nu U(\by,\bn) \bn \; | \; \bn \in S^2 \}$, for a suitable choice of $\nu \in \mathbb{R}$. Usually we choose $\nu > 0$ depending on the maximum of $U$, such that the glyphs do not intersect.
\end{remark}

When we are interested in the position of a random walker regardless of the traveling time, we can condition on the traveling time and integrate. If we assume to have exponentially distributed traveling time $t \sim Exp(\alpha)$, with mean $1/\alpha$, $\alpha>0$, then the probability density function of a random particle given its initial distribution $U$ can be written as:


\begin{equation}
P_{\alpha}(\by,\bn) = \alpha \int_0^{\infty} e^{-t \alpha} (e^{t(Q^{\bD,\ba})}U)(\by,\bn) \; \rmd t
= - \alpha (Q^{\bD, \ba} - \alpha I)^{-1} U (\by,\bn).
\end{equation}
This shows that the time-integrated process relates to the resolvent operator of $\QDa$, in particular for $Q_1$ and $Q_2$ in Eq. (\ref{eq:CEoperator}) and (\ref{eq:CCoperator}), respectively. Therefore, apart from the time evolutions, we are also concerned with the corresponding resolvent equation:

\begin{equation}\label{eq:resolventQi}
\boxed{\left( (Q_i - \alpha I \right) P^i_\alpha) (\by,\bn) = -\alpha U(\by,\bn)},
\end{equation}
with $(\by,\bn) \in \quot$, $\alpha > 0$, $i = 1,2$.

Spectral decomposition of generator $Q_i$ (with a purely discrete spectrum) therefore also yields the spectral decomposition of both the (compact) operator $\Upsilon_t$ and the (compact) resolvent operator \mbox{$(Q_i - \alpha I)^{-1}$}. It follows that Monte Carlo simulations of random walks with exponentially distributed traveling times can be used to approximate resolvent kernels. Furthermore, it can be shown that the resolvent operator occurs in Tikhonov regularization of the input \cite{duits_left-invariant_2010}. For the convection-diffusion case, the time-integrated process is practically even more useful than the time-dependent case.

\begin{figure}[t!]
   \centering
   \includegraphics[width=0.9\textwidth]{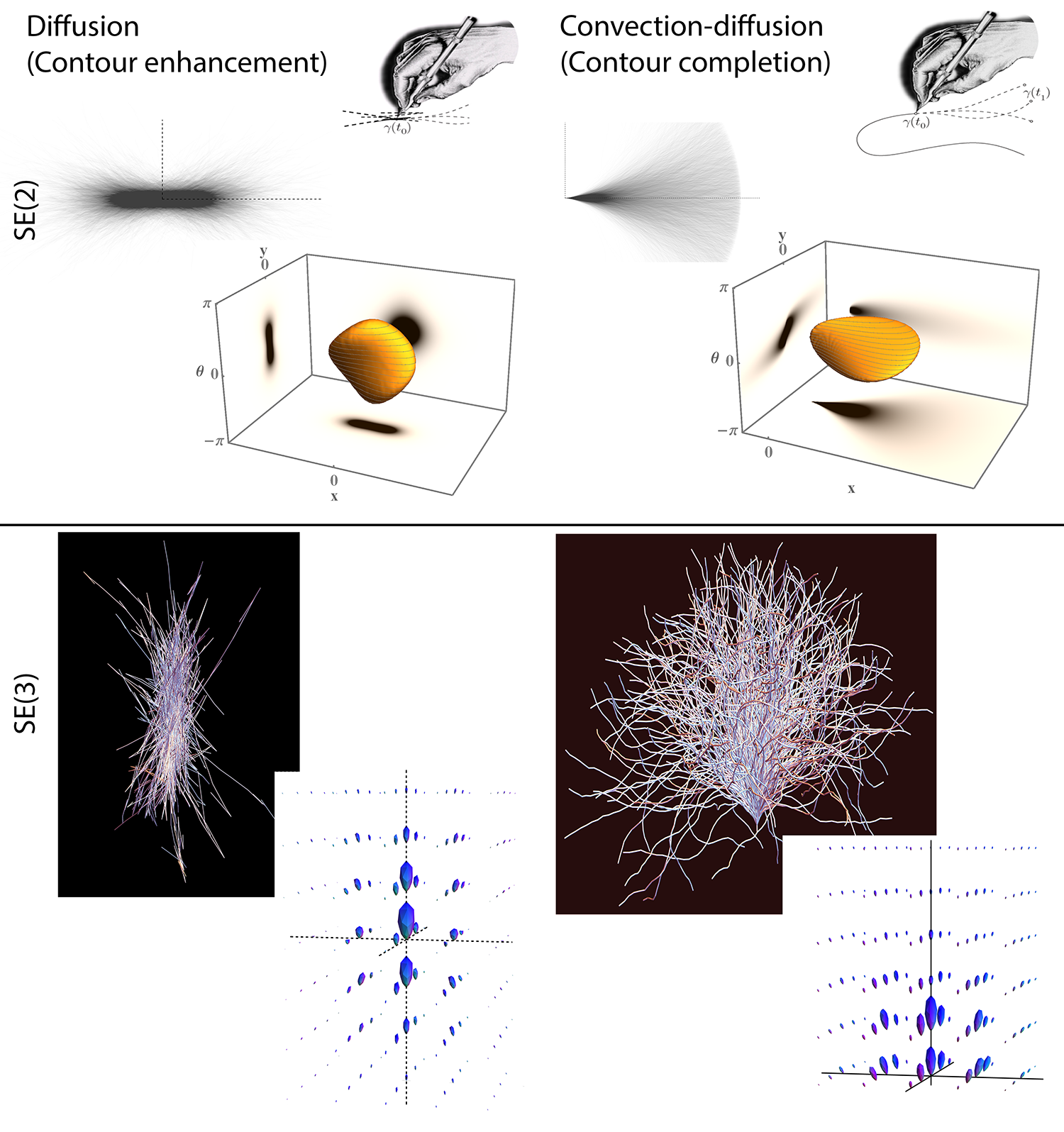}
 \caption{Various visualizations of the diffusion process (for contour enhancement, left, generated by $Q_1$) and the convection-diffusion process (for contour completion, right, generated by $Q_2$). For the SE(2) case, both the projections of random paths on $\mathbb{R}^2$ are shown, as well as an isocontour in SE(2) of the limiting distribution. For the SE(3) case, we show spatial $\R^3$-projections of random paths in SE(3), as given in \ref{app:stochastics}, and visualize the limiting distribution as a glyph field, see Remark \ref{remark:glyphfieldvis}.}\label{fig:kernelvis}
 \end{figure}

\subsection{Contributions of the Paper}
In this paper, we provide exact expressions for the kernels corresponding to the generators $Q_1$ and $Q_2$ for diffusion and convection-diffusion, in terms of eigenfunctions of the operator, after applying a Fourier transform in the spatial coordinates. As such we solve the 3D version of Mumford's direction process \cite{mumford_elastica_1994}, and the 3D version of the (2D) hypo-elliptic Brownian motion kernel, considered for orientation processing in image analysis in \cite{citti_cortical_2006,agrachev_intrinsic_2009,duits_left-invariant_2010-2}. The main results can be found in Theorems \ref{th:maintheoremCE}, \ref{th:resolventseries} and \ref{th:maintheoremCC} and in Corr. \ref{cor:ellipticcase}.

A similar approach was used in \cite{duits_explicit_2008} for the SE(2)-case, but for the SE(3)-case exact solutions are not known, to the best of our knowledge. For SE(2), solutions were expressed in \cite{duits_explicit_2008} in terms of Mathieu functions. In SE(3), we encounter (generalized) spheroidal wave functions, that can be written as a series of associated Legendre functions. We use the eigenfunctions to give expressions for both the time-dependent and the time-integrated process, associated with a resolvent equation. Using the results for the exact solutions for the time-integrated case, we derive in Thm \ref{thm:gammadist} the minimal amount of repeated convolutions of the resolvent kernels required to remove the singularities in the origin. The proof of this result is given in \ref{app:proof}.

We also provide two numerical methods to approximate the kernels. First, we extend the numerical algorithm by August \cite{august_curve_2001} from SE(2) to SE(3). This algorithm has the advantage that it is closely related to the exact solutions and can therefore be shown to yield convergence to the exact solutions. As a second approach we provide a novel approximation of the kernels, that provide correct symmetries in contrast to existing approximations provided in \cite{duits_left-invariant_2010}. This rough and local approximation is convenient as it allows for straightforward and fast implementations, but no convergence results can be obtained. A numerical approximation based on finite differences was presented in \cite{creusen_numerical_2011}, and is therefore not considered here. A full comparison between exact solutions, stochastic process limits and kernel approximations for these processes on SE(2) has been done in \cite{zhang_numerical_2016}. In the present paper, the focus will be on the derivation of the exact solutions and its connection to a numerical approximation method that generalizes the results \cite{august_curve_2001,duits_explicit_2008} from $SE(2)$ to $SE(3)$.

Finally, in order to make the connection to earlier work on exact solutions of heat kernels in the SE(2)-case \cite{agrachev_intrinsic_2009,duits_explicit_2008} via harmonic analysis and the Fourier transform on SE(2), we also derive equivalent representations of the kernels using the SE(3) Fourier transform. The details of this equivalent representation (and alternative roadmap to the solutions) is presented in \ref{ap:Fourierse3}, where we follow an  algebraic Fourier theoretic approach. We stress that the body of this article relies on classical, geometrical and functional analysis. However, the approach in \ref{ap:Fourierse3} provides the reader further insight in specific choices in the geometric analysis in the main body of the article.

\subsection{Outline of the Paper}
The paper is structured as follows. In Sections \ref{se:exactCE} and \ref{se:exactCC} we derive exact expressions for the convolution kernels of the differential equations in (\ref{eq:differentialQ}), with the main results in Theorem \ref{th:maintheoremCE} and Theorem \ref{th:maintheoremCC}. We emphasize that the roadmap and computations in these sections are very similar, only in the convergence proofs of the encountered series of eigenfunctions we need different theory for each case. In Section \ref{se:Fourierr3s2} we present a matrix representation of the evolution in a Fourier basis and provide an algorithm to numerically compute a truncation of the exact series solution. Section \ref{se:approximationkernel} shows the Gaussian kernel approximations. We summarize our findings and conclude in Section \ref{se:conclusion}. The derivation of equivalent solutions for the main PDEs using the SE(3) Fourier transform can be found in \ref{ap:Fourierse3}.

\section{Derivation of the Exact Solutions for Hypo-Elliptic Diffusion on \texorpdfstring{$\quot$}{R3xS2}}\label{se:exactCE}
In this section we derive the exact solutions for the hypo-elliptic diffusion case. We first set out the formal procedure for finding these solutions, before we present the details for this particular case and the specific eigenfunctions that we encounter. The evolution process for hypo-elliptic diffusion on $\quot$, i.e., with generator $Q_1$ as in (\ref{eq:CEoperator}), is written as follows:

\begin{equation}\label{eq:CEdiff}
\begin{cases}
\dfrac{\partial}{\partial t} W(\by,\bn,t) = Q_1 W(\by,\bn,t) = (D_{33}(\bn \cdot \nabla_{\mathbb{R}^3})^2 + D_{44} \Delta_{S^2}) W(\by,\bn,t), & \quad \by \in \mathbb{R}^3, \bn \in S^2, t \geq 0,\\
W(\by,\bn,0) = U(\by,\bn), &\quad \by \in \mathbb{R}^3, \bn \in S^2.
\end{cases}
\end{equation}
As in (\ref{eq:CEoperator}),(\ref{eq:CCoperator}), we use $\nabla_{\mathbb{R}^3}$ to indicate the gradient with respect to the spatial variables, and $\Delta_{S^2}$ to denote the Laplace-Beltrami operator on the sphere. Parameters $D_{33}, D_{44} >0$ influence the amount of spatial and angular regularization, respectively. We use both a subscript and superscript $1$ throughout this section for operators that arise from this evolution, to distinguish these from the operators corresponding to the convection-diffusion that we encounter in Section \ref{se:exactCC}.

The domain of the generator $Q_1$ equals

\begin{equation}
\mathcal{D}(Q_1) = \mathbb{H}_2(\mathbb{R}^3) \otimes \mathbb{H}_2(S^2),
\end{equation}
where we use $\mathbb{H}_2$ to denote a Sobolev space, although in $\mathcal{D}(Q_1)$ both $\mathbb{H}_2(\mathbb{R}^3)$ and $\mathbb{H}_2(S^2)$ are equipped with the usual $\mathbb{L}_2$-norm. For details on Sobolev spaces see e.g. \cite{rudin_functional_2006,wloka_partial_1987}. By linearity and left-invariance of the differential equation, the solution can be found by $\mathbb{R}^3 \rtimes S^2$-convolution with the corresponding integrable kernel $K_t^1: \mathbb{R}^3 \rtimes S^2 \rightarrow \mathbb{R}^+$:

\begin{equation}\label{eq:convolution}
\begin{aligned}
W(\by,\bn,t) &= (K^1_t \ast_{\mathbb{R}^3 \rtimes S^2} U)(\by,\bn)\\
&= \int \limits_{S^2} \int \limits_{\mathbb{R}^3} K^1_t (\bR_{\bn'}^T(\by - \by'), \bR_{\bn'}^T \bn) \; U(\by', \bn') {\rm d} \by' \; {\rm d} \sigma(\bn').
\end{aligned}
\end{equation}
The specific choice for the rotation matrix $\bR_{\bn'}$ does not matter, since the left-invariance of the PDE implies that $K_t^1(\by,\bn) = K_t^1(\bR_{\be_z,\alpha}\by,\bR_{\be_z,\alpha}\bn)$ for all $\alpha \in [0,2 \pi]$, see \cite{duits_left-invariant_2010}. This $\alpha$-invariance is important because of the equivalence relation described in \eqref{eq:equivalencerelation}. Our approach for finding the exact kernel $K_t^1$ is inspired by the approach for the SE(2) case in \cite{duits_explicit_2008}. We first apply a Fourier transform with respect to the spatial variables:

\begin{equation}
\hat{W}(\bomega,\bn,t) := \mFR(W)(\bomega,\bn,t) = \int \limits_{\mathbb{R}^3} W(\by,\bn,t) e^{-i \bomega \cdot \by} \rmd \by.
\end{equation}
The hat is used to indicate that a function has been Fourier transformed. The PDE (\ref{eq:CEdiff}) in terms of $\hW$ then becomes:

\begin{equation}\label{eq:CEdiffFourier}
\left\{
\begin{aligned}
& \frac{\partial}{\partial t} \hW(\bomega,\bn,t)= (D_{44} \Delta_{S^2} - D_{33}(\bomega \cdot \bn)^2) \hW(\bomega,\bn,t), \\
& \hW(\bomega,\bn,0) = \mFR(U)(\bomega,\bn) =: \hat{U}(\bomega,\bn).
\end{aligned}
\right.
\end{equation}
We fix $\bomega \in \mathbb{R}^3$ and we define the operator $\mB^1_{\bomega}: \mathbb{H}_2(S^2) \rightarrow \mathbb{L}_2(S^2)$ as follows:

\begin{equation}\label{eq:operatorBw}
\mB^1_{\bomega} := D_{44} \Delta_{S^2} - D_{33}(\bomega \cdot \bn)^2.
\end{equation}
We use the subscript $\bomega$ to explicitly indicate that the operator depends on the frequency vector $\bomega$, as will the eigenfunctions of this operator. When we write

\begin{equation}
(\mB \hW)(\bomega,\bn) := (\mB^1_{\bomega} \hW(\bomega, \cdot)(\bn),
\end{equation}
then the correspondence between the operator $\mB$ and the generator $Q_1$ can be written as:

\begin{equation}
\mB = (\mF_{\mathbb{R}^3} \otimes \mathbbm{1}_{\mathbb{L}_2(S^2)}) \circ Q_1 \circ (\mF_{\mathbb{R}^3}^{-1} \otimes \mathbbm{1}_{\mathbb{H}_2(S^2)}).
\end{equation}
The heat kernel $\hK_t^1 \in \mathbb{L}_2(\quot) \cap C(\quot)$ of the PDE in the Fourier domain should then satisfy:

\begin{equation}\label{eq:CEPDEkernel}
\left\{\begin{aligned}
& \frac{\partial}{\partial t} \hK_t^1(\bomega,\bn) = \mB^1_{\bomega} \hK_t^1(\bomega,\bn), \\
&\hK_0^1(\bomega,\bn) = \delta_{\be_z}(\bn),
\end{aligned}\right.
\end{equation}
with $\delta_{\be_z}$ the $\delta$-distribution on $S^2$ at $\be_z$. We show later that the $\mathbb{L}_2(S^2)$-normalized eigenfunctions of the operator $\mB^1_{\bomega}$ form an orthonormal basis for $\mathbb{L}_2(S^2)$ and that, similar to the enumeration of spherical harmonics, these functions are indexed with integers $l$ and $m$, $|m| \leq l$. For the eigenfunctions, that we denote with $\Philmom$, we have:

\begin{equation}\label{eq:eigenfunctions}
\mB^1_{\bomega} \Philmom = \lambdalmom \Philmom, \qquad \text{ with } \lambdalmom \leq 0.
\end{equation}
The kernel $\hat{K}_t^1$ can then be written in terms of these eigenfunctions as

\begin{equation}\label{eq:Khatseries}
\hat{K}_t^{1}(\bomega,\bn) = \sumlm \overline{\Philmom(\be_z)} \Philmom(\bn)  \; e^{\lambdalmom t}.
\end{equation}
The solution of the differential equation (\ref{eq:CEdiffFourier}) in the Fourier domain is given by:

\begin{equation}
\hat{W}(\bomega,\bn,t) = \sumlm (\hat{U}(\bomega,\cdot), \Philmom)_{\mathbb{L}_2(S^2)} \Philmom(\bn) \; e^{\lambdalmom t},
\end{equation}
where we rely on the following inner product convention on $\mathbb{L}_2(S^2)$:

\begin{equation}\label{eq:definnerproduct}
(f,g)_{\mathbb{L}_2(S^2)} = \int_{S^2} f(\bn) \overline{g(\bn)} \; \rmd \sigma(\bn).
\end{equation}
Thereby, the solution of Eq. (\ref{eq:CEdiff}) is given by:
\begin{equation}
W(\by,\bn,t) = \left[\mFR^{-1} \hat{W} (\cdot, \bn, t) \right] (\by).
\end{equation}
This expression should coincide with the convolution expression in Eq. (\ref{eq:convolution}). The following lemma gives us a useful identity for the eigenfunctions $\Philmom$, that allows us to connect the series expression for the solution of (\ref{eq:CEdiff}) with the convolution form in (\ref{eq:convolution}).



\begin{lemma}\label{lemma:eigenfunctions}
For all $l \in \mathbb{N}_0,m \in \mathbb{Z}$, $|m| \leq l$, let $\Philmom(\bn)$ be an eigenfunction of $\mB^1_{\bomega}$, with eigenvalue $\lambdalmom$, and let $\bR \in SO(3)$. Then $\Philmom(\bR^T \cdot)$ is an eigenfunction of $\mB^1_{\bR \bomega}$ with eigenvalue $\lambda^{l,m}_{\bR \bomega} = \lambdalmom$.
\end{lemma}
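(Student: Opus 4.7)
The plan is to verify the claim by direct substitution, exploiting two basic invariance properties: the rotation-invariance of the Laplace-Beltrami operator $\Delta_{S^2}$, and the invariance of the Euclidean inner product under $SO(3)$.

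First, I would introduce the auxiliary function $\Psi(\bn) := \Philmom(\bR^T \bn)$ and compute $\mB^1_{\bR\bomega}\Psi$ term by term. For the angular part, I would use the standard fact that for any $\bR \in SO(3)$ and sufficiently smooth $f$ on $S^2$,
\begin{equation*}
\Delta_{S^2}(f \circ \bR^T)(\bn) = (\Delta_{S^2} f)(\bR^T \bn),
\end{equation*}
i.e.\ $\Delta_{S^2}$ commutes with pullback by rotations. For the convection/multiplication part, I would use $(\bR\bomega) \cdot \bn = \bomega \cdot (\bR^T \bn)$, so that
\begin{equation*}
((\bR\bomega)\cdot \bn)^2 \Psi(\bn) = (\bomega \cdot (\bR^T\bn))^2 \, \Philmom(\bR^T\bn).
\end{equation*}

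Combining these two observations with the definition of $\mB^1_{\bomega}$ in \eqref{eq:operatorBw} immediately gives
\begin{equation*}
(\mB^1_{\bR\bomega}\Psi)(\bn) = (\mB^1_{\bomega}\Philmom)(\bR^T\bn) = \lambdalmom \,\Philmom(\bR^T\bn) = \lambdalmom \Psi(\bn),
\end{equation*}
where the middle equality uses the hypothesis \eqref{eq:eigenfunctions}. This shows that $\Psi = \Philmom(\bR^T\cdot)$ is an eigenfunction of $\mB^1_{\bR\bomega}$ with eigenvalue $\lambdalmom$, so by the indexing convention for eigenfunctions established around \eqref{eq:eigenfunctions} we may identify $\lambda^{l,m}_{\bR\bomega} = \lambdalmom$.

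There is essentially no obstacle: the only subtlety is making sure the rotation-covariance of $\Delta_{S^2}$ is invoked correctly (I would justify it in one line by noting that $\Delta_{S^2}$ is defined intrinsically from the round metric on $S^2$, which is $SO(3)$-invariant, so isometric pullback commutes with $\Delta_{S^2}$). The only labelling point worth a brief remark is that the enumeration by $(l,m)$ in $\lambdalmom$ must be consistent with the enumeration in $\lambda^{l,m}_{\bR\bomega}$; this is where one uses that the construction of eigenfunctions depends on $\bomega$ only through its length (as will be shown later in the paper via separation of variables), so the labelling is invariant under rotations of $\bomega$.
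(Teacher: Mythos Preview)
Your proposal is correct and follows essentially the same approach as the paper: a direct verification using rotation-invariance of $\Delta_{S^2}$ and the identity $(\bR\bomega)\cdot\bn = \bomega\cdot(\bR^T\bn)$, exactly as in the paper's four-line computation. Your additional remarks justifying the $SO(3)$-equivariance of $\Delta_{S^2}$ and the consistency of the $(l,m)$-labelling are reasonable elaborations but not substantively different from the paper's argument.
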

\begin{proof}
We can write

\begin{equation}
\begin{aligned}
(\mB^1_{\bR \bomega} \Philmom(\bR^T \cdot))(\bn) &= (\Delta_{S^2} \Philmom(\bR^T \cdot))(\bn) - (\bR \bomega,\bn)^2 (\Philmom)(\bR^T \bn)\\
&= (\Delta_{S^2} \Philmom)(\bR^T \bn) - (\bomega, \bR^T \bn)^2 \Philmom(\bR^T \bn) \\
&= (\mB^1_{\bomega} \Philmom(\cdot))(\bR^T \bn)\\
&= \lambdalmom \Philmom(\bR^T \bn),
\end{aligned}
\end{equation}
from which the result follows.
\end{proof}
From the lemma we conclude that the eigenvalues $\lambdalmom$ only depend on the norm $r = ||\bomega||$ of the frequency $\bomega$, so from now on we write $\lambdalmr$. Moreover, we have the following relation between eigenfunctions:

\begin{equation}\label{eq:eigenfunctionrelation}
\Phi_{l,m}^{\bR \bomega}(\bn) = \Phi_{l,m}^{\bomega}(\bR^T \bn).
\end{equation}

We can combine the previous considerations and Lemma \ref{lemma:eigenfunctions} to give two equivalent series expressions for the kernel $\hat{K}_1$ satisfying Eq. (\ref{eq:CEPDEkernel}):

\begin{equation}\label{eq:twoidentities}
\hK^1_t(\bomega,\bn) = \sumlm \overline{\Philmom(\be_z)} \, \Philmom(\bn) \, e^{\lambdalmr t} = \sumlm \overline{\Phi_{l,m}^{\bR_{\bn'}\bomega}(\bn')}\,\Phi_{l,m}^{\bR_{\bn'}\bomega}(\bR_{\bn'}\bn)\,e^{\lambdalmr t}.
\end{equation}
Finally, the solution of \eqref{eq:CEdiff} can then be written as 
\begin{equation}\label{eq:Whatseriesint}
\begin{aligned}
W(\by,\bn,t) &= (K_t^1 \ast_{\quot} U)(\by,\bn) = ((\mFR^{-1} \hK_t^1(\cdot,\cdot)) \ast_{\quot} U)(\by,\bn)\\
&=  \int \limits_{\mathbb{R}^3} \sumlm (\hU(\bomega,\cdot),\Philmom)_{\mathbb{L}_2(S^2)} \Philmom(\bn) \; e^{\lambdalmr t} e^{i \by \cdot \bomega} \rmd \bomega. 
\end{aligned}
\end{equation}

Now that we have formal expressions for the solution of the hypo-elliptic diffusion PDE, we can focus on finding the eigenfunctions $\Philmom$.

\subsection{Frequency-Dependent Choice of Variables}
So far we have not specified a choice of spherical variables for the functions to which the operator $\mB^1_{\bomega}$ is applied. This is needed in order to derive expressions for the eigenfunctions $\Philmom$. As in the case for spherical harmonics, we want to use separation of variables for each fixed spatial frequency $\bomega$. To be able to do this, we choose to parameterize the orientation of $\bn$ using angles dependent on $\bomega$. This choice should be such that the variables can be separated in both the Laplace-Beltrami operator $\Delta_{S^2}$ and in the multiplication operator $(\bomega \cdot \bn)^2$.

The Laplace-Beltrami operator on a Riemannian manifold with metric tensor $\mG$ is defined as:

\begin{equation}
\Delta_{LB} = \frac{1}{\sqrt{|\mG|}}\sum_{i,j=1}^n \frac{\partial}{\partial x_i} \left( \sqrt{|\mG|} \mG^{ij} \frac{\partial}{\partial x_j} \right).
\end{equation}
The $\mG^{ij}$ are matrix elements of the inverse $\mG^{-1}$ of $\mG$. The Laplace-Beltrami operator on the sphere $\Delta_{S^2}$ is, when we take standard spherical coordinates, given by:

\begin{equation}
\Delta_{S^2} = \frac{1}{\sin{\phi}} \frac{\partial}{\partial \phi} \left(\sin \phi \frac{\partial}{\partial \phi} \right) + \frac{1}{\sin^{2} \phi} \frac{\partial ^2}{\partial \theta^2},
\end{equation}
with $x = \sin \phi \cos \theta$, $y = \sin \phi \sin \theta$ and $z = \cos \phi$. However, in these coordinates the term $(\bomega \cdot \bn)^2$ is not separable.

To this end, we choose spherical coordinates with respect to the (normalized) frequency $r^{-1} \bomega$, with $r = ||\bomega||$, and a second axis perpendicular to $\bomega$. The specific choice for the latter axis is not important, since in our PDE only the angle between $\bomega$ and $\bn$ plays a role. For convenience we take as second axis $\frac{\bomega \times \be_z}{||\bomega \times \be_z||}$, and we let $\beta, \gamma$ denote the angles of rotation about axes $\frac{\bomega \times \be_z}{||\bomega \times \be_z||}$ and $r^{-1} \bomega$, respectively. For $r^{-1} \bomega = \be_z$, $\beta$ and $\gamma$ are just the standard spherical coordinates. Every orientation $\bn \in S^{2}$ can now be written in the form

\begin{equation}\label{def:coordinatechoice}
\bn = \bn^{\bomega}(\beta, \gamma) =  \bR_{r^{-1} \bomega,\gamma} \bR_{\frac{\bomega \times \be_z}{||\bomega \times \be_z||},\beta} (r^{-1} \bomega), \text{ with } r = ||\bomega||,
\end{equation}
see Fig. \ref{fig:figureParametrization}.

\begin{figure}[t!]
   \centering
   \includegraphics[width=0.4\textwidth]{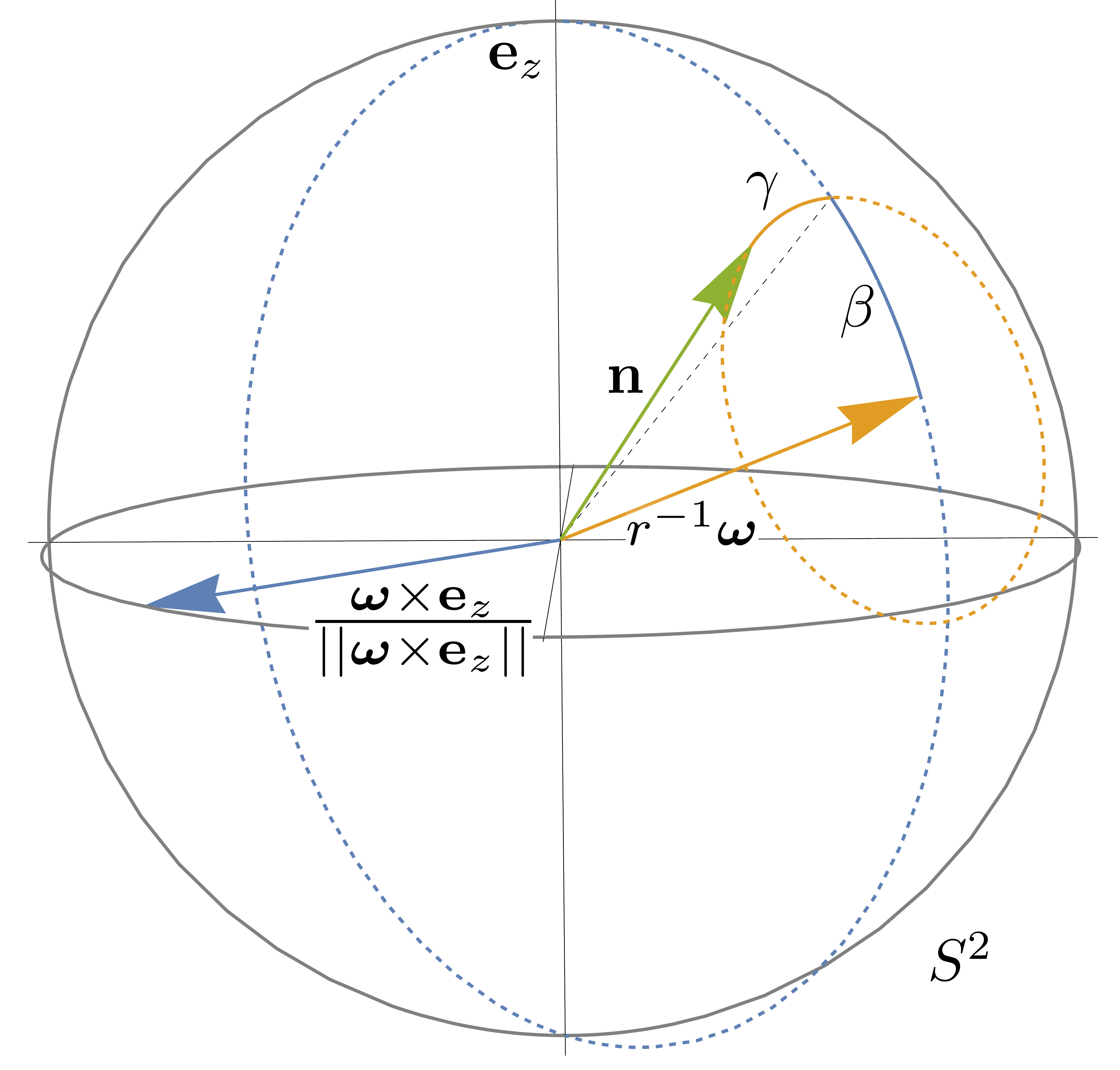}
 \caption{For $\bomega \neq \be_z$, we parameterize every orientation $\bn$ (green) by rotations around $r^{-1} \bomega$ (orange) and $\frac{\bomega \times \be_z}{||\bomega \times \be_z||}$ (blue). In other words, $\bn^{\bomega} (\beta,\gamma) = \bR_{r^{-1} \bomega,\gamma} \bR_{\frac{\bomega \times \be_z}{||\bomega \times \be_z||}, \beta} (r^{-1} \bomega )$.}\label{fig:figureParametrization}
 \end{figure}

\begin{lemma}\label{le:laplacebeltrami}
The Laplace-Beltrami operator on $S^2$ with the choice of variables as in Eq. (\ref{def:coordinatechoice}) is given by:

\begin{equation}
\Delta_{S^2} = \frac{1}{\sin{\beta}} \frac{\partial}{\partial \beta} \left(\sin \beta \frac{\partial}{\partial \beta} \right) + \frac{1}{\sin^{2} \beta} \frac{\partial ^2}{\partial \gamma^2}.
\end{equation}
The full operator $\mB^1_{\bomega}$ as defined in (\ref{eq:operatorBw}) with this choice of coordinates takes the separable form
\begin{equation}\label{eq:operatorBwbetagamma}
\mB^1_{\bomega} = \frac{D_{44}}{\sin{\beta}} \frac{\partial}{\partial \beta} \left(\sin \beta \frac{\partial}{\partial \beta} \right) + \frac{D_{44}}{\sin^{2} \beta} \frac{\partial ^2}{\partial \gamma^2} - D_{33} (r \cos \beta)^2.
\end{equation}
\begin{proof}
The result follows from direct computation of the metric tensor $\mG$ w.r.t. the coordinates in in \eqref{def:coordinatechoice}.
\end{proof}
\end{lemma}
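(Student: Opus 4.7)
The plan is to observe that the parametrization \eqref{def:coordinatechoice} is nothing but the usual spherical coordinate chart on $S^2$, rewritten with respect to a rotated frame in which $r^{-1}\bomega$ plays the role of the north pole and $\frac{\bomega \times \be_z}{\|\bomega \times \be_z\|}$ plays the role of the $x$-axis. Indeed, choosing $r^{-1}\bomega$ as pole, one first tilts the pole by angle $\beta$ about the equatorial axis $\frac{\bomega \times \be_z}{\|\bomega \times \be_z\|}$, and then rotates by $\gamma$ about the new pole $r^{-1}\bomega$; this is precisely how standard spherical coordinates $(\phi,\theta)$ are constructed when the pole is $\be_z$, just with the whole configuration rigidly rotated by some $\bR_0 \in SO(3)$ that maps $\be_z$ to $r^{-1}\bomega$ and $\be_x$ to $\frac{\bomega \times \be_z}{\|\bomega \times \be_z\|}$.

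From this, the formula for $\Delta_{S^2}$ follows immediately from two facts: (i) the round metric on $S^2$ is invariant under $SO(3)$, hence so is the Laplace–Beltrami operator; and (ii) in standard spherical coordinates $(\phi,\theta)$ one already has $\Delta_{S^2} = \frac{1}{\sin\phi}\partial_\phi(\sin\phi\,\partial_\phi) + \frac{1}{\sin^2\phi}\partial_\theta^2$. Since $(\beta,\gamma)$ is exactly this chart precomposed with a rotation $\bR_0$, and since $\Delta_{S^2}$ commutes with the pullback by $\bR_0$, the operator takes the same coordinate expression with $(\phi,\theta)$ replaced by $(\beta,\gamma)$. Alternatively, and equivalently, one can simply compute the metric tensor $\mG$ from \eqref{def:coordinatechoice} by taking $\partial_\beta \bn$ and $\partial_\gamma \bn$, finding $\mG = \mathrm{diag}(1,\sin^2\beta)$, and plugging into the general Laplace–Beltrami formula.

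For the $(\bomega\cdot\bn)^2$ term, I would observe that the rotation $\bR_{r^{-1}\bomega,\gamma}$ fixes the axis $r^{-1}\bomega$, so it leaves the inner product $\bn \cdot (r^{-1}\bomega)$ invariant. Hence
\[
\bomega \cdot \bn = r \, (r^{-1}\bomega) \cdot \bR_{\frac{\bomega\times\be_z}{\|\bomega\times\be_z\|},\beta}(r^{-1}\bomega) = r\cos\beta,
\]
because $r^{-1}\bomega$ is rotated by angle $\beta$ about an axis orthogonal to it, so its inner product with itself becomes $\cos\beta$. This yields the multiplication term $-D_{33}(r\cos\beta)^2$ in $\mB^1_{\bomega}$.

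There is no real obstacle here; the only thing to be mildly careful about is that the parametrization is well-defined precisely when $\bomega \times \be_z \neq \mathbf{0}$, i.e.\ $\bomega$ is not collinear with $\be_z$. In the degenerate case $\bomega \parallel \be_z$ one may replace the perpendicular axis by any unit vector orthogonal to $\bomega$, and by the rotation invariance of $\Delta_{S^2}$ and the axial symmetry of $(\bomega\cdot\bn)^2$ the resulting expression is independent of that choice; hence \eqref{eq:operatorBwbetagamma} persists by continuity. Combining the two pieces gives the claimed separable form of $\mB^1_{\bomega}$.
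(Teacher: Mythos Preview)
Your proposal is correct and essentially coincides with the paper's proof, which simply states that the result follows from direct computation of the metric tensor $\mG$ in the coordinates \eqref{def:coordinatechoice}. You supply more detail than the paper (the rotation-invariance argument, the explicit evaluation $\bomega\cdot\bn = r\cos\beta$, and the degenerate-axis remark), but the underlying idea---that $(\beta,\gamma)$ is a rotated copy of the standard spherical chart, yielding $\mG=\mathrm{diag}(1,\sin^2\beta)$---is the same.
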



\subsection{Separation of Variables}
Thanks to the choice of coordinates \eqref{def:coordinatechoice} and Lemma \ref{le:laplacebeltrami}, we can apply the method of separation of variables to solve the diffusion equation (\ref{eq:CEdiffFourier}). We first look for solutions of the PDE without initial conditions, and we take the solutions of the separable form $T(t)\Phi(\beta,\gamma)$:

\begin{equation}
\begin{aligned}
\frac{\partial}{\partial t} (T(t)\Phi(\beta,\gamma)) &= \mB^1_{\bomega}( T(t) \Phi)(\beta,\gamma), \\
\Phi(\beta,\gamma) \frac{\partial}{\partial t} T(t) &= T(t) (\mB^1_{\bomega} \Phi)(\beta,\gamma).
\end{aligned}
\end{equation}
It follows that we get

\begin{equation}
\frac{1}{T(t)}\frac{\partial T(t)}{\partial t} = \frac{(\mB^1_{\bomega}\Phi)(\beta,\gamma)}{\Phi(\beta,\gamma)} =: \lambda_r,
\end{equation}
where $\lambda_r$ is the separation constant. We get that $T(t) = K e^{\lambda_r t}$, $K$ constant and $\lambda_r$ an eigenvalue of $\mB^1_{\bomega}$.

For finding eigenfunctions $\Phi(\beta,\gamma)$ of $\mB^1_{\bomega}$, we assume that these functions can be written as

\begin{equation}\label{eq:separationofphi}
 \Phi(\beta,\gamma)=B(\beta)\,C(\gamma).
\end{equation}
We then find:

\begin{equation}
\left\{ \frac{\sin{\beta}}{ B(\beta)} \frac{d}{d \beta }\left(\sin \beta \frac{d B(\beta)}{d \beta} \right)  - \left(\frac{D_{33}}{D_{44}} r^2 \cos^2 \beta + \frac{\lambda_r}{D_{44}}\right) \sin ^2 \beta \right\} + \frac{1}{C(\gamma)} \frac{d^2 C(\gamma)}{d \gamma^2} = 0.
\end{equation}
The resulting equation for $C(\gamma)$ can be solved straightforwardly. Taking into account the $2\pi$-periodicity of $C$, we find that $C(\gamma)$ is a multiple of $e^{i m \gamma}$ for some $m \in \mathbb{Z}$. Normalization then gives:

\begin{equation}\label{eq:eigenfunctionC}
C(\gamma) \equiv C_m(\gamma) = \frac{1}{\sqrt{2\pi}}e^{i m \gamma}, \qquad m \in \mathbb{Z}.
\end{equation}

\subsection{Spheroidal Wave Equation}
The equation for $B(\beta)$, with separation constant $m$ as above, now becomes:

\begin{equation}
\sin{\beta} \frac{d}{d \beta }\left(\sin \beta \frac{d B(\beta)}{d \beta} \right)  + \left [ -\left(\frac{D_{33}}{D_{44}} r^2 \cos^2 \beta + \frac{\lambda_r}{D_{44}}\right) \sin ^2 \beta - m^2 \right] B(\beta) = 0.
\end{equation}
With the substitution $x = \cos \beta$, $y(x) = B(\beta)$ (which is commonly done for equations of this type), we get:

\begin{equation}\label{eq:maineq}
(1-x^2) \frac{d^2 y(x)}{dx^2} - 2x \frac{d y(x)}{dx} +  \left[ -\frac{D_{33}}{D_{44}} r^2 x^2 - \frac{\lambda_r}{D_{44}} - \frac{m^2}{1-x^2} \right]y(x) = 0, \qquad -1 \leq x \leq 1.
\end{equation}
We introduce two more parameters to bring this equation in a standard form:

\begin{equation}
\rho := \sqrt{\frac{D_{33}}{D_{44}}} r, \qquad \tlambda_\rho = - \frac{\lambda_r}{D_{44}}.
\end{equation}
We then find:

\begin{equation}\label{eq:spheroidalwave}
\frac{d}{dx} \left[ (1-x^2) \frac{dy(x)}{dx}\right] + \left[ \tlambda_\rho - \rho^2 x^2 - \frac{m^2}{1-x^2} \right] y(x) =0.
\end{equation}
This equation is known as the spheroidal wave equation (SWE) \cite[Eq. 30.2.1]{flammer_spheroidal_1957,olver_nist_2010}, for which the eigenvalues and eigenfunctions, commonly referred to as spheroidal eigenvalues and spheroidal wave functions, are known and can be found up to arbitrary accuracy. In the remainder of this article, we denote them with $\tlambda_\rho^{l,m}$ and $\Slmrho$, respectively, for $l \in \mathbb{N}_0$, $m \in \mathbb{Z}$, $|m| \leq l$. For explicit analytic representations, see \eqref{def:spheroidalwavefunctions} in \ref{ap:SWF}.

 \begin{figure}[t!]
   \centering
   \includegraphics[width=0.9\textwidth]{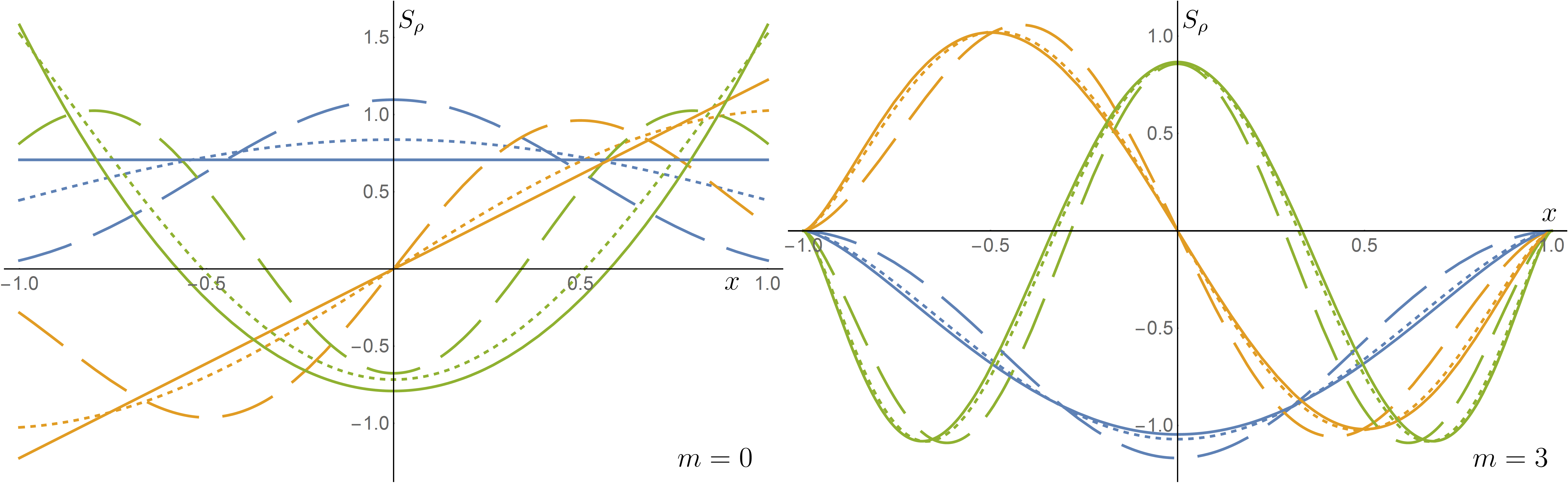}
 \caption{Plot of the spheroidal wave functions $S^{l,m}_{\rho}(x)$ for $m = 0$ (left) and $m = 3$ (right), with $\rho = 0, 2, 5$ (indicated with solid, small dashed and long dashed lines, respectively) and $l = m, m+1, m+2$ (indicated with blue, yellow and green, respectively).}\label{fig:spheroidalwavefunctions}
 \end{figure}

 \begin{figure}[t!]
   \centering
   \includegraphics[width=0.9\textwidth]{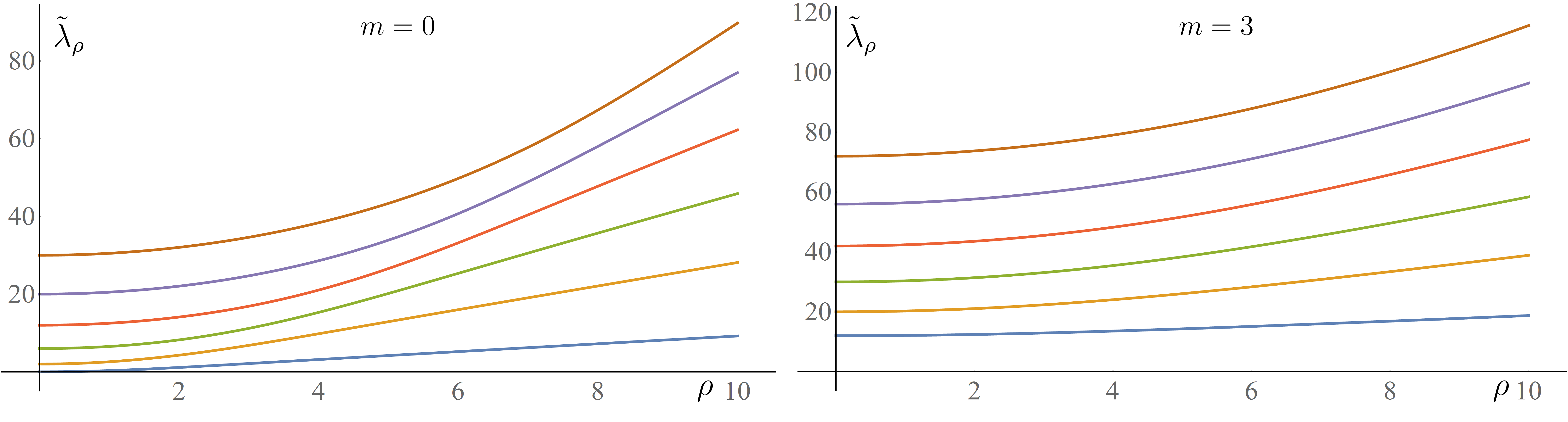}
 \caption{Plot of the spheroidal eigenvalues $\tlambda^{l,m}_\rho$ as a function of $\rho$, for $m = 0$ (left) and $m=3$ (right), and $l = m, \dots, m + 4$ (top to bottom). The eigenvalues are real for all $\rho$.}\label{fig:spheroidaleigenvalues}
 \end{figure}

 The (normalized) spheroidal wave functions and the spheroidal eigenvalues as a result of our computations are displayed in Figs. \ref{fig:spheroidalwavefunctions} and \ref{fig:spheroidaleigenvalues} for a selection of values of $l$ and $m$. It can be seen that all eigenvalues are real and all eigenfunctions are real-valued and vary continuously with parameter $\rho$. In the next section, we show that the spheroidal wave functions form a complete orthonormal basis for $\mathbb{L}_2([-1,1])$, from which it follows that the eigenfunctions $\Philmom$ form a complete orthonormal basis for $\mathbb{L}_2(S^2)$.

\subsection{Sturm-Liouville Form}
The spheroidal wave equation can be written in Sturm-Liouville form:
\begin{equation}
(L y)(x) = \frac{d}{dx}\left[ p(x) \frac{dy(x)}{dx} \right] + q(x) y(x) = -\tlambda_\rho w(x) y(x), \quad x \in [-1,1].
\end{equation}
For this we choose $p(x) = (1-x^2)$, $q(x) = -\rho^2x^2 - \frac{m^2}{1-x^2}$, and we have weight function $w(x) =1$. In this formulation, $p(x)$ vanishes at the boundary of the interval, which makes our problem a singular Sturm-Liouville problem (on a finite interval). It is sufficient to require boundedness of the solution and its derivative at the boundary points to have nonnegative, distinct, simple eigenvalues and existence of a countable, complete orthonormal basis of eigenfunctions $\{y_k\}_{k=1}^{\infty}$ \cite{margenau_mathematics_1956} for the spheroidal wave equation. Since the weight function $w(x) = 1$, orthogonality is understood in the following sense:


	
\begin{equation}\label{def:orthogonality}
\int_{-1}^1 y_k(x) \overline{y_l(x)}w(x) \; \rmd x = \int_{-1}^1 y_k(x) \overline{y_l(x)} \; \rmd x = \delta_{kl}.
\end{equation}
For our choice of $p(x), q(x)$ and $w(x)$, with $m$ fixed, we have $y_k(\cos \beta) = S_\rho^{l,m}(\cos \beta)$ (with $k = l^2 + l + 1 + m$) and corresponding eigenvalues $\tilde{\lambda}_\rho^{l,m}$.

\subsection{Main Theorem}
From the above considerations, we can come to the main result of this section.

\begin{theorem}\label{th:maintheoremCE}
The normalized eigenfunctions $\Philmom$ of the operator $\mathcal{B}^1_{\bomega}$ in (\ref{eq:operatorBwbetagamma}) are given by:

\begin{equation}\label{eq:phitheorem}
\Philmom(\bn^{\bomega}(\beta,\gamma)) = \Slmrho(\cos \beta) \; C_m (\gamma), \qquad l \in \mathbb{N}_0, m \in \mathbb{Z}, |m|\leq l, \quad \rho = \sqrt{\frac{D_{33}}{D_{44}}} ||\bomega||.
\end{equation}
Here $\Slmrho$ is the $\mathbb{L}_2([-1,1])$-normalized eigenfunction of Eq. (\ref{eq:spheroidalwave}), given in \eqref{def:spheroidalwavefunctions}, with corresponding eigenvalues $\lambdalmr = -D_{44}\tlambdalmrho$, with $\tlambdalmrho$ the standard eigenvalues of the SWE as in Eq. (\ref{eq:spheroidalwave}). Function $C_m$ is as in Eq. (\ref{eq:eigenfunctionC}). The solution of the hypo-elliptic diffusion on $\quot$ Eq. (\ref{eq:CEdiff}) is given by:

\begin{equation}\label{eq:solutionCEviainvFourier}
\begin{aligned}
W(\by,\bn,t) &= \left( K_t^1 \ast_{\quot} U \right)(\by,\bn), \qquad \text{with} \\
K_t^1 (\by,\bn) &= \mFR^{-1}\left( \bomega \mapsto \sumlm  \overline{\Philmom(\be_z)} \Philmom(\bn) \; e^{\lambdalmr t} \right)(\by).
\end{aligned}
\end{equation}

\end{theorem}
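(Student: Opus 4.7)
My plan is to assemble the eigenfunction formula \eqref{eq:phitheorem} by directly reading off the separation-of-variables computation that has been set up, and then obtain the kernel formula \eqref{eq:solutionCEviainvFourier} by spectral expansion of the $\delta$-distribution in the Fourier domain. First, I would invoke Lemma \ref{le:laplacebeltrami}, which gives the separable expression \eqref{eq:operatorBwbetagamma} for $\mathcal{B}^1_{\bomega}$ in the $(\beta,\gamma)$-coordinates of \eqref{def:coordinatechoice}. With the ansatz $\Phi(\beta,\gamma) = B(\beta)C(\gamma)$ from \eqref{eq:separationofphi}, the $\gamma$-equation forces $C(\gamma) = C_m(\gamma) = \frac{1}{\sqrt{2\pi}} e^{im\gamma}$ by $2\pi$-periodicity and $\mathbb{L}_2([0,2\pi])$-normalization, with separation constant $m \in \mathbb{Z}$. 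The $\beta$-equation with $x = \cos\beta$ reduces exactly to the spheroidal wave equation \eqref{eq:spheroidalwave} with $\rho = \sqrt{D_{33}/D_{44}}\,\|\bomega\|$ and $\tilde{\lambda}_\rho = -\lambda_r/D_{44}$, so $B(\beta) = S^{l,m}_\rho(\cos\beta)$ and $\lambda^{l,m}_r = -D_{44}\,\tilde\lambda^{l,m}_\rho$.

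Next I would justify that these products exhaust the spectrum and form an orthonormal basis of $\mathbb{L}_2(S^2)$. For each fixed $m$, the $\beta$-equation is a singular Sturm-Liouville problem on $[-1,1]$ with $p(x) = 1-x^2$ vanishing at the endpoints, and the boundedness condition at $x = \pm 1$ selects a countable family $\{S^{l,m}_\rho\}_{l \ge |m|}$ that is complete and orthonormal in $\mathbb{L}_2([-1,1])$ (this is the statement cited from \cite{margenau_mathematics_1956} in the Sturm-Liouville paragraph). Tensoring against the Fourier basis $\{C_m\}_{m\in\mathbb{Z}}$ of $\mathbb{L}_2([0,2\pi])$ and using the fact that the spherical surface measure in the $(\beta,\gamma)$-chart factors as $\sin\beta\, d\beta\, d\gamma = -dx\, d\gamma$ (on $[-1,1]\times[0,2\pi]$) shows that $\{\Phi^{\bomega}_{l,m}\}$ is a complete orthonormal system in $\mathbb{L}_2(S^2)$ with eigenvalues $\lambda^{l,m}_r$ depending only on $r = \|\bomega\|$ (as already established in Lemma \ref{lemma:eigenfunctions}).

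Finally, I would derive the kernel formula. In the spatial Fourier picture, $\hat{K}^1_t(\bomega,\cdot)$ solves \eqref{eq:CEPDEkernel} with initial condition $\delta_{\be_z}$ on $S^2$. Expanding in the eigenbasis via the reproducing identity $\delta_{\be_z}(\bn) = \sum_{l,m} \overline{\Phi^{\bomega}_{l,m}(\be_z)}\, \Phi^{\bomega}_{l,m}(\bn)$ and propagating each mode by $e^{\lambda^{l,m}_r t}$ yields \eqref{eq:Khatseries}, from which \eqref{eq:solutionCEviainvFourier} follows by inverse spatial Fourier transform and the convolution identity \eqref{eq:convolution}. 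The main obstacle is conceptual rather than computational: it is the completeness/basis assertion for the spheroidal wave functions on the singular interval $[-1,1]$, which I would handle by appealing to the standard singular Sturm-Liouville theory cited above, noting that boundedness at the regular singular points $x = \pm 1$ selects the correct self-adjoint realization. Convergence of the series \eqref{eq:Khatseries} pointwise in $(\bomega,\bn)$ for $t > 0$, as well as joint continuity needed for the claim $\hat K^1_t \in \mathbb{L}_2(\quot) \cap C(\quot)$, follows from the rapid (at least polynomial-in-$l$) growth of $-\tilde\lambda^{l,m}_\rho$, which provides the exponential damping $e^{\lambda^{l,m}_r t}$ needed for term-by-term manipulations and for interchange of sum and inverse Fourier transform.
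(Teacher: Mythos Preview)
Your proposal is correct and follows essentially the same route as the paper: separation of variables in the $(\beta,\gamma)$-coordinates, singular Sturm--Liouville theory for completeness of the spheroidal wave functions, and spectral expansion of $\delta_{\be_z}$ to obtain the kernel. The paper's proof is terser, phrasing completeness via compactness of the resolvent of $\mathcal{B}^1_{\bomega}$ and the spectral theorem for compact self-adjoint operators, whereas you spell out the tensor-product structure $\mathbb{L}_2([-1,1])\otimes\mathbb{L}_2([0,2\pi])$ and add an explicit remark on convergence of the series for $t>0$; these are the same argument at different levels of detail.
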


\begin{proof}
From the separation of variables approach above, and Eq. \eqref{def:orthogonality}, it follows that the $\Philmom$ are indeed normalized eigenfunctions. From the fact that the resolvent of operator $B^1_{\bomega}$ is compact, which follows from Sturm-Liouville theory, we obtain, using the spectral decomposition of compact self-adjoint operators, a complete orthonormal basis of eigenfunctions of $\mathbb{L}_2(\quot)$. As $\dim(S^2) = 2$, we can number the orthonormal basis with two indices $l,m$, similar to the spherical harmonics, that appear in the special case $\bomega = \mathbf{0}$. This allows us to use the expression in (\ref{eq:Khatseries}) and the result follows.
\end{proof}

In the particular case of $\bomega = 0$, the operator $\mB^1_{\bomega}$ reduces to $\mB^1_{\mathbf{0}} = D_{44} \Delta_{S^2}$, for which the spherical harmonic functions are the eigenfunctions. For the spherical harmonics, we use the following convention:

\begin{equation}\label{eq:SH}
\Delta_{S^2} \Ylm(\beta,\gamma) = -l(l+1)\Ylm(\beta,\gamma), \qquad \Ylm(\beta,\gamma) =  \frac{\varepsilon_m}{\sqrt{2\pi}}   P_l^m(\cos \beta) e^{i m \gamma},
\end{equation}
with the associated Legendre polynomials $P_l^m$ as defined in \eqref{def:plm}, and

\begin{equation}
\qquad \varepsilon_m = \left\{  \begin{matrix}
(-1)^m && \qquad m \geq 0, \\
1 && \qquad m < 0.
\end{matrix}\right.
\end{equation}
In the following corollary we write the eigenfunctions $\Philmom$ in (\ref{eq:phitheorem}) directly in terms of the spherical harmonics.

\begin{corollary}\label{corr:relationSHSWE}
The $\mathbb{L}_2(\quot)$-normalized eigenfunctions $\Philmom$ can be directly expressed in the spherical harmonics as given in Eq. (\ref{eq:SH}):

\begin{equation}
\Philmom(\bn^{\bomega}(\beta,\gamma)) = \sum_{j = 0}^{\infty} \frac{d_j^{l,m}}{||d^{l,m}||} Y^{|m|+j,m}(\beta,\gamma),
\end{equation}
with coefficients $d_j^{l,m} = d_j^{l,m}(\rho)$ depend only on $\rho = \sqrt{\frac{D_{33}}{D_{44}}} || \bomega||$, and are given by Eq. (\ref{eq:matrixeqCE}) in \ref{ap:SWF}. 

\end{corollary}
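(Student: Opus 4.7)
The plan is to exploit the factorised form of both sides: by Theorem \ref{th:maintheoremCE} the eigenfunction splits as $\Philmom(\bn^{\bomega}(\beta,\gamma)) = \Slmrho(\cos\beta)\,C_m(\gamma)$, while the spherical harmonics split analogously as $Y^{k,m}(\beta,\gamma) = \frac{\varepsilon_m}{\sqrt{2\pi}} P^m_k(\cos\beta)\,e^{im\gamma}$. Since both factorisations share the same $\gamma$-factor, the task reduces to expanding the spheroidal wave function $\Slmrho(\cdot)$ in the associated Legendre basis $\{P^m_{|m|+j}\}_{j\geq 0}$ of $\mathbb{L}_2([-1,1])$, and then absorbing the remaining scalar factors into the coefficients $d^{l,m}_j/\|d^{l,m}\|$.

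Such a basis expansion exists because, at $\rho = 0$, the spheroidal wave equation \eqref{eq:spheroidalwave} reduces to the associated Legendre equation with eigenvalues $k(k+1)$ and eigenfunctions $P^m_k$ for $k \geq |m|$; completeness is guaranteed by the Sturm--Liouville theory already invoked in Theorem \ref{th:maintheoremCE}. Substituting the ansatz $\Slmrho(x) = \sum_{j\geq 0} \tilde d^{\,l,m}_j\, P^m_{|m|+j}(x)$ into \eqref{eq:spheroidalwave} turns the self-adjoint differential operator into a diagonal action (with eigenvalues $-k(k+1)$) on each basis element, so only the perturbation $-\rho^2 x^2$ remains non-diagonal. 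Applying the standard identity that expresses $x^2 P^m_k(x)$ as a linear combination of $P^m_{k-2}, P^m_k, P^m_{k+2}$ (obtained by iterating the recurrence $x P^m_k(x) = \tfrac{k-m+1}{2k+1} P^m_{k+1}(x) + \tfrac{k+m}{2k+1} P^m_{k-1}(x)$) and equating coefficients of each $P^m_{|m|+j}$ produces a symmetric tridiagonal (three-term) eigenvalue problem for the vector $(\tilde d^{\,l,m}_j)_{j\geq 0}$ with eigenvalue $\tlambda^{l,m}_\rho$; this is precisely the matrix equation \eqref{eq:matrixeqCE} of \ref{ap:SWF}.

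Multiplying the expansion by $C_m(\gamma) = \frac{1}{\sqrt{2\pi}}e^{im\gamma}$ and rewriting each $P^m_{|m|+j}(\cos\beta) e^{im\gamma}/\sqrt{2\pi}$ as $\varepsilon_m^{-1} Y^{|m|+j,m}(\beta,\gamma)$ yields the series. The prefactor $\varepsilon_m$, together with any Legendre-to-spherical-harmonic normalization constants, is absorbed into the coefficients to define $d^{l,m}_j$, and the $\mathbb{L}_2([-1,1])$-normalization of $\Slmrho$ (hence the $\mathbb{L}_2(S^2)$-normalization of $\Philmom$) forces the overall rescaling by the $\ell^2$-norm $\|d^{l,m}\|$, giving exactly the stated identity.

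The main obstacle is not conceptual but a matter of careful bookkeeping: keeping track of the normalization constants relating $P_k^m$ and $Y^{k,m}$, the Condon--Shortley-type phase $\varepsilon_m$, and the conventions for $m < 0$ versus $m \geq 0$, and then checking that the recurrence produced by matching coefficients coincides term-by-term with the tridiagonal system fixed by \eqref{eq:matrixeqCE}.
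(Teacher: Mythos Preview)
Your proposal is correct and follows essentially the same route as the paper: the corollary is an immediate consequence of Theorem~\ref{th:maintheoremCE} combined with the Legendre expansion \eqref{eq:expansionSlm} of $S^{l,m}_\rho$ derived in \ref{ap:SWF}, where the tridiagonal system \eqref{eq:matrixeqCE} is obtained exactly as you describe (by iterating the recurrence \eqref{eq:relationSH} to rewrite $x^2 P_{m+j}^m$ and equating coefficients), and the normalization identity \eqref{eq:normSrholm} supplies the factor $\|d^{l,m}\|^{-1}$. Your caveat about the Condon--Shortley phase $\varepsilon_m$ is well taken: since $\varepsilon_m=\pm 1$ and the eigenvector $d^{l,m}$ is only determined up to an overall sign, this can be absorbed into the coefficients, which the paper does silently.
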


\subsection{Time-Integrated Processes and Resolvent Kernels}
Under the assumption of exponentially distributed traveling times, the probability of finding a particle at a certain position and orientation can be expressed in terms of the resolvent $(Q_1 - \alpha I)^{-1}$ of the generator $Q_1$, recall Eq. (\ref{eq:CEoperator}) and Section \ref{se:stopro}. It can be seen that for eigenfunctions of $\mB^1_{\bomega}$, we have:

\begin{equation}
(\mB^1_{\bomega} - \alpha I)^{-1} \Philmom = \frac{1}{\lambdalmr - \alpha} \Philmom.
\end{equation}
It follows that the resolvent kernel is given by

\begin{equation}
\hat{R}_\alpha^1(\bomega, \bn) := - \alpha \left( (\mB_\bomega^1 - \alpha I)^{-1} \delta_{\be_z} \right) = \alpha \sumlm \frac{1}{\alpha - \lambdalmr} \overline{\Philmom(\be_z)} \Philmom(\bn).
\end{equation}
Thereby the probability density of finding a random walker at a certain position $\by$ with orientation $\bn$, regardless of the traveling time, is given by:

\begin{equation}
\begin{aligned}
P^1_\alpha(\by,\bn) &= \left(\mF^{-1}_{\mathbb{R}^3} (\hat{R}_\alpha^1) \ast_{\quot}  U \right) (\by,\bn)  \\ &= \left(\mF^{-1}_{\mathbb{R}^3} \left(\bomega \mapsto \alpha \sumlm \frac{1}{\alpha - \lambdalmr} \overline{\Philmom(\be_z)} \Philmom(\cdot) \right) \ast_{\quot} U \right)(\by,\bn).
\end{aligned}
\end{equation}

\subsection{From the Hypo-Elliptic Diffusion Kernel to the Elliptic Diffusion Kernel}
So far in this section we have restricted our diffusion by choosing only $D_{33}$ and $D_{44}= D_{55}$ as nonzero entries in $\bD$, recall (\ref{eq:generator}), motivated from the use of this process in applications. However, even for elliptic diffusion it is still possible to obtain exact solutions, with just a simple transformation from the hypo-elliptic case. We are still required to use only legal generators, in the sense discussed in Section \ref{se:legaldiffusion}. Furthermore, for all differentiable functions $\tilde{U}:SE(3) \to \mathbb{R}$ induced by $U:\mathbb{R}^{3} \rtimes S^{2} \to \mathbb{R}$ via (\ref{eq:quotientgroupconstraint}) one has $\mathcal{A}_{6} \tilde{U}=(\mathcal{A}_{6})^2 \tilde{U}=0$.

Hereby, also the case of elliptic diffusion can be considered, i.e., $\bD = \text{diag}(D_{11},D_{11},D_{33},D_{44},D_{44},0) >0$, such that the generator $\tQ_E$  of the evolution on SE(3) and the generator $Q_E$ on $\quot$ become:

\begin{equation}
\begin{aligned}
\tQ_E := D_{11}(\mA_1^2 + \mA_2^2) + D_{33} \mA_3^2 + D_{44}(\mA_4^2 + \mA_5^2) \implies Q_E :=& D_{11} || \bn \times \nabla ||^2 + D_{33} (\bn \cdot \nabla)^2 + D_{44} \Delta_{S^2} \\ =& D_{11} || \bn \times \nabla ||^2 + Q_1.
\end{aligned}
\end{equation}
Recall Remark~\ref{rem:relationgroupquotientaction} for the relation between the group and quotient generators. The operator $\mB_\bomega$ on $\mathbb{H}_2(S^2)$, obtained as before from applying a Fourier transform in the spatial variables, changes accordingly:

\begin{equation}\label{eq:operatorBomegaE}
\mB_\bomega^E = \mB_\bomega^1 - D_{11} r^2 \sin^2 \beta = D_{44} \Delta_{S^2} - (D_{33} - D_{11})r^2 \cos^2 \beta - r^2 D_{11}.
\end{equation}

It is now fairly straightforward to obtain from Theorem~\ref{th:maintheoremCE} the following corollary:

\begin{corollary}\label{cor:ellipticcase}
Let $D_{33}>D_{11} >0$ and $t>0$. Then
the elliptic heat kernel $K_{t}^{E}=e^{t Q_{E}} \delta_{(\ul{0},\ul{e}_{z})}$ is given by
\begin{equation}\label{eq:Ell}
\begin{aligned}
K_t^E (\by,\bn) &= \mFR^{-1}\left( \bomega \mapsto \sumlm  \overline{\PhilmomE(\be_z)} \PhilmomE(\bn) \; e^{(\lambdalmr  - \|\bomega\|^2 D_{11}) t} \right)(\by).
\end{aligned}
\end{equation}
with $\PhilmomE(\ul{n}^{\bomega}(\beta,\gamma))= S_{\rho \sqrt{\frac{D_{33}-D_{11}}{D_{33}}}}(\cos \beta)\, C_{m}(\gamma)$. For $D_{11} \downarrow 0$, one recovers the hypo-elliptic diffusion kernel $K_{t}^{1}$ computed in Theorem~\ref{th:maintheoremCE}.
\end{corollary}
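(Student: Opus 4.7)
The plan is to reduce the elliptic case to the hypo-elliptic case already solved in Theorem~\ref{th:maintheoremCE} by recognizing that the new operator in the Fourier domain differs from $\mB^1_{\bomega}$ only by a rescaling of one parameter plus a constant shift of the spectrum.

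First I would establish the displayed form (\ref{eq:operatorBomegaE}). Applying $\mFR$ to $D_{11}\|\bn\times\nabla_{\R^3}\|^2$ yields the multiplication operator $-D_{11}\|\bn\times\bomega\|^2$. In the frequency-adapted coordinates (\ref{def:coordinatechoice}), the angle $\beta$ is by construction the angle between $r^{-1}\bomega$ and $\bn$, so $\bn\cdot\bomega=r\cos\beta$ and hence
\begin{equation}
\|\bn\times\bomega\|^{2}=\|\bomega\|^{2}-(\bn\cdot\bomega)^{2}=r^{2}-r^{2}\cos^{2}\beta=r^{2}\sin^{2}\beta.
\end{equation}
Adding this to $\mB^1_{\bomega}$ and using $\sin^{2}\beta=1-\cos^{2}\beta$ yields exactly (\ref{eq:operatorBomegaE}). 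The legality of $Q_E$ on the quotient follows from Section~\ref{se:legaldiffusion} together with $\mA_6\tilde U=\mA_6^2\tilde U=0$, as noted just before the corollary statement.

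Next I would rewrite (\ref{eq:operatorBomegaE}) as
\begin{equation}
\mB^{E}_{\bomega}+D_{11}r^{2}I=D_{44}\Delta_{S^{2}}-(D_{33}-D_{11})r^{2}\cos^{2}\beta,
\end{equation}
which has precisely the form of the hypo-elliptic operator $\mB^1_{\bomega}$ with the parameter $D_{33}$ replaced by $D_{33}-D_{11}>0$. Consequently Theorem~\ref{th:maintheoremCE} applies verbatim to this shifted operator: the eigenfunctions are obtained from (\ref{eq:phitheorem}) by replacing $\rho=\sqrt{D_{33}/D_{44}}\,r$ with
\begin{equation}
\tilde\rho \;=\;\sqrt{\tfrac{D_{33}-D_{11}}{D_{44}}}\,r\;=\;\rho\sqrt{\tfrac{D_{33}-D_{11}}{D_{33}}},
\end{equation}
giving $\PhilmomE(\bn^{\bomega}(\beta,\gamma))=S^{l,m}_{\tilde\rho}(\cos\beta)\,C_{m}(\gamma)$. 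These $\PhilmomE$ therefore form a complete orthonormal basis of $\mathbb{L}_{2}(S^{2})$ of eigenfunctions of $\mB^{E}_{\bomega}$, with eigenvalues $\lambdalmr-D_{11}r^{2}$ (where $\lambdalmr=-D_{44}\tilde\lambda^{l,m}_{\tilde\rho}$ now denotes the SWE eigenvalues at parameter $\tilde\rho$, per the conventions fixed in the corollary).

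Finally, inserting this spectral decomposition into the spatial-frequency representation of the heat kernel (exactly as in (\ref{eq:Khatseries})–(\ref{eq:solutionCEviainvFourier}), now with $\mB^{E}_{\bomega}$ in place of $\mB^{1}_{\bomega}$) gives the expression (\ref{eq:Ell}); the extra factor $e^{-\|\bomega\|^{2}D_{11}t}$ accounts for the constant shift of the spectrum. For the limit $D_{11}\downarrow 0$, continuity of the spheroidal wave functions and eigenvalues in $\rho$ (visible in Figs.~\ref{fig:spheroidalwavefunctions}–\ref{fig:spheroidaleigenvalues}) gives $\tilde\rho\to\rho$ and $\PhilmomE\to\Philmom$, while the exponential factor tends to $1$, recovering Theorem~\ref{th:maintheoremCE}. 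The only delicate point worth double-checking is the completeness claim for $\PhilmomE$, but this is automatic because Theorem~\ref{th:maintheoremCE} already gives completeness for every admissible positive value of the ratio inside the square root, including $(D_{33}-D_{11})/D_{44}$.
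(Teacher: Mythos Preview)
Your proposal is correct and follows essentially the same route as the paper: identify $\mB^{E}_{\bomega}$ as $\mB^{1}_{\bomega}$ with $D_{33}$ replaced by $D_{33}-D_{11}$ plus a constant spectral shift $-D_{11}r^{2}$, then invoke Theorem~\ref{th:maintheoremCE}. The only notable difference is in the limit $D_{11}\downarrow 0$: the paper justifies interchanging the limit with the series via the Weierstrass criterion (using a uniform bound on the eigenfunctions and the exponential decay of $e^{\lambdalmr t}$ for $t>0$), whereas your appeal to continuity of $S^{l,m}_{\rho}$ and $\tilde\lambda^{l,m}_{\rho}$ via the figures is only heuristic---you should make the uniform convergence argument explicit to close that step.
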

\begin{proof} Recall that $\rho = \sqrt{\frac{D_{33}}{D_{44}}} ||\bomega||= \sqrt{\frac{D_{33}}{D_{44}}} r$. Then the result follows by (\ref{eq:operatorBomegaE}) and the transformation
\[
r \mapsto \sqrt{\frac{D_{33}-D_{11}}{D_{33}}}r \textrm{ and } \lambda \mapsto \lambda - r^2 D_{11}.
\]
Finally, the limit $D_{11} \downarrow 0$ can be interchanged with the sum in the series, since by application of the Weiertrass criterion (and the existence of a uniform bound on all eigenfunctions $\PhilmomE(\bn)$) the series is uniformly converging for all $t>0$.
\end{proof}








\section{Derivation of the Exact Solutions for Convection-Diffusion on \texorpdfstring{$\quot$}{R3xS2}}\label{se:exactCC}
In this section we consider the second central equation of the paper, Eq. (\ref{eq:differentialQ}) with $i = 2$, for the 3D direction process or convection-diffusion process. We focus here on the time-integrated process, as this has proven to be more useful in applications. We show how exact solutions for the resolvent kernel can be found. Similar to the approach in Section \ref{se:exactCE}, we derive eigenfunctions for the corresponding evolution operator in the Fourier domain. However, this operator can no longer be transformed into the standard Sturm-Liouville form. We therefore use the framework of perturbations of self-adjoint operators \cite{kato_operators_1976,makin_summability_2012} to prove important properties of the eigenvalues and to prove completeness of the eigenfunctions.

The convection-diffusion system that we consider is the following:

\begin{equation}\label{eq:CCdiff}
\begin{cases}
\dfrac{\partial}{\partial t} W(\by,\bn,t) = Q_2 W (\by,\bn,t) = (- (\bn \cdot \nabla_{\mathbb{R}^3}) + D_{44} \Delta_{S^2}) W(\by,\bn,t), & \quad \by \in \mathbb{R}^3, \bn \in S^2, t \geq 0,\\
W(\by,\bn,0) = U(\by,\bn), &\quad \by \in \mathbb{R}^3, \bn \in S^2.
\end{cases}
\end{equation}
In this section, we focus on the time-integrated, resolvent process (although the solution strategy is the same):

\begin{equation}\label{eq:resolventCC}
\left( (\bn \cdot \nabla_{\mathbb{R}^3}) - D_{44} \Delta_{S^2} - \alpha I \right) P^2_\alpha(\by,\bn) = \alpha \,  U(\by,\bn).
\end{equation}
Again we fix $\bomega \in \mathbb{R}^3$ and the operator $\mB^2_{\bomega}$ (superscript 2) corresponding to $Q_2$ now becomes:
\begin{equation}\label{eq:operatorBwCC}
\mathcal{B}^2_{\bomega} = D_{44} \Delta_{S^2} - (i \bomega \cdot \bn).
\end{equation}
When we express $\bn$ in spherical coordinates $\beta,\gamma$ with respect to $\bomega$ as was done in (\ref{def:coordinatechoice}) and Fig.~\ref{fig:figureParametrization}, the differential operator in $\beta,\gamma$ becomes:

\begin{equation}
\mB^2_{\bomega} = \frac{D_{44}}{\sin{\beta}} \frac{\partial}{\partial \beta} \left(\sin \beta \frac{\partial}{\partial \beta} \right) + \frac{D_{44}}{\sin^{2} \beta} \frac{\partial ^2}{\partial \gamma^2} -  (i r \cos \beta).
\end{equation}
Since the Laplace-Beltrami operator is symmetric and the multiplication operator has a purely imaginary symbol, the operator $\mB^2_{\bomega}$ in this case is not symmetric and not self-adjoint, but does satisfy:

\begin{equation}\label{eq:adjointproperty}
\mB_{\bomega}^{2,*} f = \overline{\mB^2_{\bomega} \overline{f}}, \qquad \text{ for all } \quad f \in \mathbb{H}_2(S^2).
\end{equation}
Here we note that the domain of the closed unbounded operator $\mB_\bomega^2$ is the Sobolev space $\mathbb{H}_2(S^2)$, equipped with the $\mathbb{L}_2(S^2)$-norm. It will turn out to be useful to regard $\mB^2_\bomega$ as the sum of a self-adjoint operator and a bounded operator $\mM: \mathbb{L}_2(S^2) \rightarrow \mathbb{L}_2(S^2)$, that just applies a multiplication, $(\mM f)(\bn^\bomega(\beta,\gamma)) = \cos \beta \cdot f(\bn^\bomega(\beta,\gamma))$:

\begin{equation}
\mB^2_\bomega = D_{44} \Delta_{S^2} - i r \cos \beta = D_{44} \Delta_{S^2} - i r \mM, \qquad r = ||\bomega||.
\end{equation}
So in particular, as before, the operator $\mB^2_{\mathbf{0}} = D_{44} \Delta_{S^2}$ has the spherical harmonics as eigenfunctions. We denote the eigenfunctions of $\mB^2_\bomega$ with $\Psilmom$ and the corresponding eigenvalues with $\lambdalmr$, even though the eigenvalues are not the same as in Section \ref{se:exactCE}. Again we assume that the eigenfunctions can be written as the product-form $\Psilmom(\bn^{\bomega}(\beta,\gamma)) = B(\beta)C(\gamma)$, leading to two ordinary differential equations. The equation with variable $\gamma$ is the same as before, with solutions $e^{i m \gamma}$. The equation for variable $\beta$ can be written as

\begin{equation}
\frac{1}{\sin \beta} \frac{d}{d \beta }\left(\sin \beta \frac{d B(\beta)}{d \beta} \right)  + \left [ -\left(\frac{1}{D_{44}} i r \cos \beta + \frac{\lambda_r}{D_{44}}\right) - \frac{m^2}{\sin^2 \beta} \right] B(\beta) = 0,
\end{equation}
i.e., as

\begin{equation}\label{eq:CCrewritten}
\frac{1}{\sin \beta} \frac{d}{d \beta }\left(\sin \beta \frac{d B(\beta)}{d \beta} \right)  + \left [ -\left( i \rho \cos \beta + \tlambda_\rho \right) - \frac{m^2}{\sin^2 \beta} \right] B(\beta) = 0,
\end{equation}
now with 

\begin{equation}\label{eq:relationrholambdarho}
\rho = \frac{r}{D_{44}}, \qquad \tlambda_\rho = \frac{\lambda_r}{D_{44}}.
\end{equation}
We define the differential operator $\mB_\rho^m$ as

\begin{equation}\label{def:operatorBrhom}
\mB_\rho^m := \left(\frac{1}{\sin \beta} \frac{d}{d \beta} \left( \frac{1}{\sin \beta} \frac{d}{d \beta} \right) \right) - \frac{m^2}{\sin^2 \beta} - i \rho \cos \beta = \mB_0^m - i \rho \mM, 
\end{equation}
with slight abuse of notation, since now $\mM: \mathbb{L}_2([0,\pi]) \rightarrow \mathbb{L}_2([0,\pi])$. Then Eq. (\ref{eq:CCrewritten}) can be rewritten as:

\begin{equation}
\mB^m_\rho B(\beta) = \tlambda_\rho B(\beta), \qquad \beta \in [0,\pi].
\end{equation}
We explicitly denote the dependence on $\rho$ and the separation constant $m$, as we need it later in our spectral analysis of the operator.

\subsection{The Generalized Spheroidal Wave Equation}
After applying the transformation $x = \cos \beta$, $y(x) = B(\beta)$ to Eq. (\ref{eq:CCrewritten}), we get:

\begin{align}\label{eq:maineqCC}
(1-x^2) \frac{d^2 y(x)}{dx^2} - 2x \frac{d y(x)}{dx} +  \left[ -\rho i x - \tilde{\lambda}_\rho - \frac{m^2}{1-x^2} \right]y(x) = 0, \qquad -1 \leq x \leq 1.
\end{align}
This equation now has the form of a specific case of the generalized spheroidal wave equation (GSWE) \cite[Sec. 30.12]{leaver_solutions_1986,olver_nist_2010}.

\begin{remark} In literature \cite{leaver_solutions_1986,figueiredo_generalized_2007}, the generalized spheroidal wave equation also appears in the following form:
\begin{equation}
x(x-x_0)\frac{d^2 y(x)}{dx^2} + (C_1 + C_2x) \frac{dy(x)}{dx} + [\omega^2 x(x - x_0) - 2\eta \omega(x- x_0) + C_3]y(x) = 0.
\end{equation}
In this equation $C_i$, $\omega$, $\eta$ are constants and the equation has singularities at $x= 0$ and $x = x_0$. With an appropriate choice of constants, the GSWE can be brought to the form of Equation (\ref{eq:maineqCC}). However, this does require taking the limit $\omega \rightarrow 0$,  such that $2 \eta \omega$ stays bounded and nonzero. According to \cite{figueiredo_generalized_2007}, this type of limit is considered by both Whittaker and Ince. We are only interested in the solution on the interval $[0, x_0]$. In \cite{leaver_solutions_1986,figueiredo_generalized_2007}, solutions are provided in various series expansions, but not all of them converge on $[0,x_0]$ and not all of them allow for taking the limit $\omega \rightarrow 0$ as above. Analogous to SWE case, we derive solutions that are series of associated Legendre functions. This is different from the solutions in \cite{leaver_solutions_1986,figueiredo_generalized_2007}, but our series is also suited for the case $\omega = 0$.
\end{remark}

We refer to \ref{ap:GSWF} for the derivation of the eigenvalues $\tilde{\lambda}_\rho^{l,m}$ of the GSWE and the corresponding eigenfunctions that we denote with $GS^{l,m}_\rho$. Here we just state that the eigenfunctions of $\mB_{\omega}^{2}$ are given by

\begin{equation}\label{important}
\Psilmom(\bn^{\bomega} (\beta,\gamma)) = GS^{l,m}_\rho(\cos \beta) \frac{e^{i m \gamma}}{\sqrt{2 \pi}}, \quad l \in \mathbb{N}_0, \; m \in \mathbb{Z}, \; |m| \leq l, \rho = \frac{||\bomega||}{D_{44}},
\end{equation}
in which we used the same substitution $x = \cos \beta$ as before. The functions $GS_\rho^{l,m}$ for certain $\rho$, $l$ and $m$ are shown in Fig. \ref{fig:gsweigenfunctions}. Recall \eqref{eq:relationrholambdarho} for the relation $\lambda_r^{l,m} = -D_{44} \tilde{\lambda}_\rho^{l,m}$ between the eigenvalues corresponding to $\Psilmom$ and $GS_\rho^{l,m}$, respectively.

\begin{figure}[t!]
   \centering
   \includegraphics[width=0.9\textwidth]{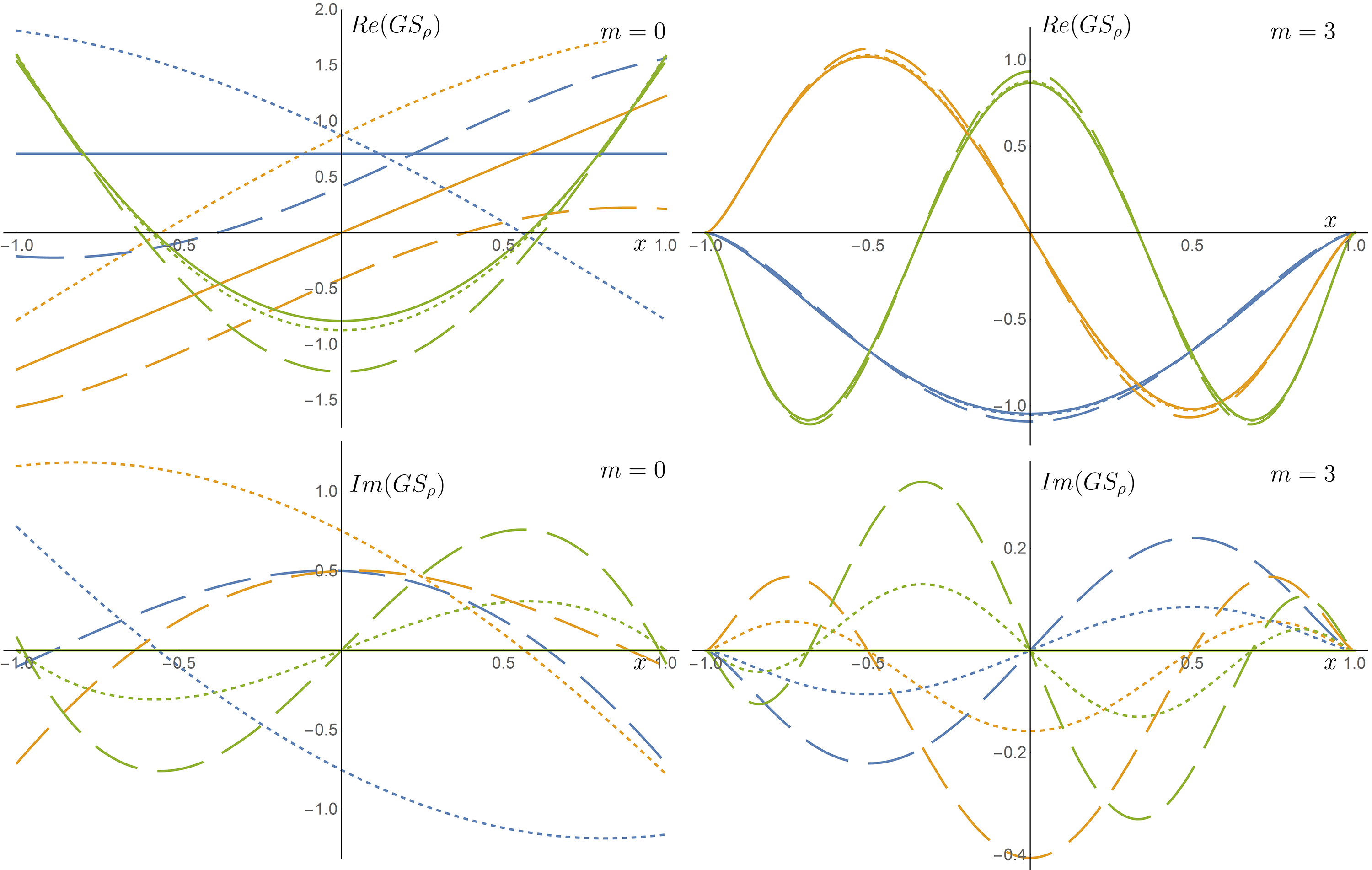}
 \caption{Plot of the real (top) and imaginary (bottom) part of the generalized spheroidal wave functions $GS^{l,m}_{\rho}(x)$ for $m = 0$ (left) and $m = 3$ (right), with $\rho = 0, 2, 5$ (indicated with solid, small dashed and long dashed lines, respectively) and $l = m, m+1, m+2$ (indicated with blue, yellow and green, respectively).}\label{fig:gsweigenfunctions}
 \end{figure}

From property (\ref{eq:adjointproperty}) the following can be derived:

\begin{align}
\lambdalmr (\Psilmom, \overline{\Psilpmpom})_{\mathbb{L}_2(S^2)} = (\mB^2_{\bomega} \Psilmom, \overline{\Psilpmpom})_{\mathbb{L}_2(S^2)} \overset{(\ref{eq:adjointproperty})} = (\Psilmom, \overline{\mB^2_{\bomega} \Psilpmpom})_{\mathbb{L}_2(S^2)} \nonumber \\= (\Psilmom, \overline{\lambdalpmpr} \overline{\Psilpmpom})_{\mathbb{L}_2(S^2)} = \lambdalpmpr (\Psilmom, \overline{\Psilpmpom})_{\mathbb{L}_2(S^2)}.
\end{align}
This implies that

\begin{equation}\label{eq:orthogonalityofpsi}
\lambdalmr = \lambdalpmpr \; \vee \; (\Psilmom, \overline{\Psilpmpom})_{\mathbb{L}_2(S^2)} = 0.
\end{equation}
As a result, we see that if $\{ \Psilmom \}$ is complete and it admits a reciprocal basis $\{\Psi_\bomega^{l,m}\}$ such that \\ $(\Psi_\bomega^{l,m},\Psi_{l',m'}^\bomega) = \delta_{l'}^l \delta_{m'}^m$, then for the reciprocal basis functions we have $\Psi^{l,m}_\bomega = \overline{\Psi_{l,m}^\bomega}$. In the next section, this completeness is further discussed.

\subsection{The Time-Integrated Process}


The kernel for the time-integrated process in the spatial Fourier domain corresponds to:

\begin{equation}\label{eq:resolventseriesCC}
\hat{R}_\alpha^2(\bomega, \bn) := -\alpha \left((\mB_\bomega^2 - \alpha I)^{-1}\delta_{e_z} \right)(\bn) =  \alpha \sumlm \frac{1}{\alpha - \lambdalmr} \frac{\overline{\Psilmom(\be_z)} \overline {\Psilmom(\bn)}}{(\Psi^{\bomega}_{l,m}, \overline{\Psi^{\bomega}_{l,m}})} .
\end{equation}
However, there are conditions on the convergence of this series expression. Since operator $\mB_\bomega^2$, in contrast to $\mB_\bomega^1$, is no longer self-adjoint, the standard Sturm-Liouville theory, that ensures completeness of the eigenfunctions with negative, real eigenvalues, cannot be applied. In the following we formulate a lemma, on the eigenvalues of $\mB_\rho^m$, and a theorem, on the eigenfunctions, that combined imply that the convergence holds almost everywhere. Only for particular radii $||\bomega|| = D_{44} \rho^m_n$, for some $\rho^m_n$ in the frequency domain there is no convergence, but it can be shown that for any $m$ this happens only on a countable set of $\{\rho^m_n\}_{n \in \mathbb{N}_0}$ that has no accumulation point. As a result, the series in (\ref{eq:resolventseriesCC}) converges almost everywhere in the Fourier domain, and thereby the inverse Fourier transform, similar to Eq. (\ref{eq:solutionCEviainvFourier}) in Theorem \ref{th:maintheoremCE}, is still well-defined.

In the next lemma we prove properties of the eigenvalues $\lambdalmr$ of the operator $\mB_\bomega^2$ that are necessary to have convergence of the series in (\ref{eq:resolventseriesCC}). For this we also need to consider the operator $\mB_\rho^m$ for fixed $m$, recall the definition in (\ref{def:operatorBrhom}).




\begin{lemma}[Eigenvalues of the operator $\mB_\bomega^2$]\label{le:eigenvalues} We have the following properties for the eigenvalues of $\mB_\bomega^2$:

\begin{enumerate} 
\item Let $m \in \mathbb{Z}$, then there exists a $\rho^m_*>0$ such that $\mB_\rho^m$ has real eigenvalues. Moreover, there is at most a countable set $\{\rho^m_n\}_{n=1}^\infty$ where two eigenvalues collide and branch into a complex conjugate pair of eigenvalues.
\item For all $r \geq 0$ the real part of the eigenvalues of $\mB_\bomega^2$ is negative.
\end{enumerate}
\begin{proof}
We prove the two points subsequently:
\begin{enumerate}
\item For $\rho = 0$, the operator $\mB_0^m$, recall (\ref{def:operatorBrhom}), is self-adjoint, negative semi-definite and therefore all eigenvalues $\tlambda_0^{l,m} = -l(l+1)$, $l \geq|m|$ are real, negative and simple. From the spectral inclusion theorem \cite{veselic_spectral_2007} it follows for the spectrum $\sigma(\mB_\rho^m)$ that:

\begin{equation}
\sigma(\mB^m_\rho) \subset \{\lambda \in \mathbb{C} \; \vline \; \text{dist}(\lambda, \sigma(\mB_{0}^m)) \leq ||i \rho \mM||\}.
\end{equation}
The operator norm is $||i \rho \mM|| =  \rho$ and for $\mB_0^m$ the minimal distance between two eigenvalues is the distance between the two smallest eigenvalues, when $l = |m|$ and $l = |m|+1$, resulting in $|\tlambda^{|m|+1,|m|}_{{0}} - \tlambda^{|m|,|m|}_{0}| = 2(|m|+1) > 0$. Therefore, we choose $\rho_*^m = (|m|+1)$ to guarantee that $\tlambda^{|m|+1,|m|}_\rho \neq \tlambda^{|m|,|m|}_\rho$ for all $\rho < \rho_*^m$. It can be observed from Eq. (\ref{eq:maineqCC}) that when $y(x)$ is an eigenfunction for $\lambda$, that $\overline{y(-x)}$ is an eigenfunction for $\bar{\lambda}$. It cannot happen that branching of eigenvalues occurs without two eigenvalues colliding, since the multiplicity of $\lambda(\rho)$ depends continuously on $\rho$. Since we have shown that no eigenvalues can collide for $\rho < \rho^m_*$, we are guaranteed to have real eigenvalues in this case. 


Now according to \cite{meixner_grundlagen_1954}, there exists a nonzero analytical function $F(\lambda,\rho)$, such that the equation $F(\lambda,\rho) = 0$ defines the eigenvalues $\lambda_i^m$, $m$ fixed, as functions of $\rho$. We define $\rho^m_n$ to be those values for $\rho$, in increasing order, for which $\lambda_i^m(\rho) = \lambda_j^m(\rho)$ for some $i \neq j$. Due to the analyticity of $F$, the set $\{ \rho_n^m\}_{n = 0}^\infty$ is countable and cannot have an accumulation point \cite{meixner_grundlagen_1954}. We specify the analytic function whose zeros provides for given $m \in \mathbb{Z}$  the values $(\rho^{m}_n)_{n=0}^{\infty}$ later (in (Eq.~\ref{eq:analyticfunc})).


\item To show that all eigenvalues have a negative real part, it is sufficient to show that the symmetric part of the operator $\mB_\bomega^2$ is negative definite. Indeed for all $f \in \mathbb{H}_2(S^2)$ (dense in $\mathbb{L}_2(S^2)$), we have:

\begin{multline}
\left(\frac{\mB^2_\bomega + (\mB^2_{\bomega})^*}{2} f, f\right) = \frac12 \left( (D_{44} \Delta_{S^2} -r i \mM)f + (D_{44} \Delta_{S^2} + r i \mM)f , f \right) = (D_{44} \Delta_{S^2} f,f) \leq 0.
\end{multline}
\end{enumerate}
\end{proof}

\end{lemma}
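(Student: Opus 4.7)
My plan is to treat the two parts with quite different tools: classical analytic perturbation theory for part 1, and a numerical-range / Rayleigh-quotient argument for part 2.

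For part 1, I would set up $\mathcal{B}_\rho^m = \mathcal{B}_0^m - i\rho\mathcal{M}$ as a holomorphic family of type (A) in $\rho$, noting that $\mathcal{B}_0^m$ is self-adjoint with purely discrete, simple, well-separated spectrum $\{-l(l+1) : l \geq |m|\}$, and that the bounded perturbation $i\rho\mathcal{M}$ has operator norm $\rho$ (since $|\cos\beta|\leq 1$). By the spectral inclusion theorem (or Bauer--Fike), each eigenvalue of $\mathcal{B}_\rho^m$ lies within distance $\rho$ of some eigenvalue of $\mathcal{B}_0^m$. Since the minimal gap between successive eigenvalues of $\mathcal{B}_0^m$ is $2(|m|+1)$, realized by $l=|m|$ and $l=|m|+1$, the choice $\rho_*^m := |m|+1$ guarantees that no two eigenvalue discs overlap on $[0,\rho_*^m)$, so every eigenvalue branch is analytically isolated. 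To deduce reality on $[0,\rho_*^m)$ I would exploit the symmetry $y(x)\mapsto \overline{y(-x)}$, which by direct inspection of \eqref{eq:maineqCC} sends an eigenfunction of eigenvalue $\lambda$ to one of $\bar\lambda$. Thus the spectrum is invariant under complex conjugation; since the branches depend analytically on $\rho$ and start real at $\rho=0$, a branch can leave the real axis only at a collision point, and no such collision occurs while $\rho<\rho_*^m$.

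For the countability claim I would invoke the fact, standard in the Meixner--Sch\"afke theory of spheroidal-type equations, that the eigenvalues arise as zeros of a scalar analytic function $F(\lambda,\rho)$ (for example, the determinant of the tridiagonal three-term-recurrence system for the Legendre-series coefficients of the eigenfunction, recall Corollary~\ref{corr:relationSHSWE}; the explicit form is the one referenced later in \eqref{eq:analyticfunc}). Collisions correspond to $\rho$ for which $F(\cdot,\rho)$ has a double zero, i.e.\ to the zero set of the analytic discriminant $D(\rho) = \mathrm{Res}_\lambda(F,\partial_\lambda F)(\rho)$. This $D$ is not identically zero since at $\rho=0$ all eigenvalues are distinct, so by the identity theorem its zero set $\{\rho_n^m\}$ is discrete, hence countable and without accumulation points in any bounded region.

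For part 2, I would write $\mathcal{B}_\bomega^2 = D_{44}\Delta_{S^2} - ir\mathcal{M}$ and observe that $\Delta_{S^2}$ is self-adjoint and $\mathcal{M}$ is multiplication by the real function $\cos\beta$, so $(i r\mathcal{M})^{*} = -i r\mathcal{M}$. Hence the symmetric part is
\begin{equation}
\tfrac{1}{2}\bigl(\mathcal{B}_\bomega^2 + (\mathcal{B}_\bomega^2)^*\bigr) = D_{44}\Delta_{S^2} \leq 0.
\end{equation}
For any normalized eigenfunction $\Psi$ with eigenvalue $\lambda$, taking real parts of $(\mathcal{B}_\bomega^2\Psi,\Psi)=\lambda$ yields $\mathrm{Re}\,\lambda = D_{44}(\Delta_{S^2}\Psi,\Psi) \leq 0$. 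To upgrade this to strict negativity (as the statement asserts), I would rule out $\Psi$ being a constant function: if $\Psi\equiv c\neq 0$ then $\mathcal{B}_\bomega^2\Psi = -irc\cos\beta$, which is not a scalar multiple of $c$ unless $r=0$. Hence for $r>0$ no eigenfunction lies in $\ker \Delta_{S^2}$, giving $\mathrm{Re}\,\lambda<0$; the case $r=0$ is immediate from $\lambda = -D_{44}\,l(l+1)$.

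The main obstacle I anticipate is the discreteness claim for the branching set $\{\rho_n^m\}$: it rests on the existence of a genuinely analytic scalar characteristic function $F(\lambda,\rho)$ for the two-point boundary problem on $[-1,1]$ with regular singularities at $\pm 1$. Constructing $F$ carefully (e.g.\ via the three-term recurrence for Legendre coefficients, matching the treatment in Corollary~\ref{corr:relationSHSWE} and \ref{ap:GSWF}) and justifying that $D(\rho)\not\equiv 0$ is the delicate step; everything else is a direct application of Kato's perturbation framework together with the conjugation symmetry of \eqref{eq:maineqCC}.
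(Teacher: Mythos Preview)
Your proposal is correct and follows essentially the same line as the paper: the same spectral inclusion bound with $\rho_*^m=|m|+1$, the same conjugation symmetry $y(x)\mapsto\overline{y(-x)}$ to force reality before any collision, the same appeal to an analytic characteristic function for countability of the branching set, and the same symmetric-part computation for part~2. Your treatment is in fact slightly sharper than the paper's in two respects: you make the countability argument more explicit via a resultant/discriminant rather than a bare citation, and in part~2 you upgrade $\mathrm{Re}\,\lambda\leq 0$ to strict negativity for $r>0$ by excluding constant eigenfunctions, whereas the paper writes ``negative definite'' but only verifies $\leq 0$.
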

The dependency of the eigenvalues on $\rho$ is displayed for two different values of $m$ in Fig. \ref{fig:branchingofeigenvalues}. In the figure, the points $\rho_n^m$ for which $\mB_\rho^m$ has two colliding eigenvalues are indicated with red dots. The points $\rho_n^m$ are in fact zeros of the analytic function

\begin{equation}\label{eq:analyticfunc}
\rho \mapsto (\Psilmom,\overline{\Psilmom}) = \int_{S^2} (\Psilmom(\bn)^2) \; \rmd \sigma(\bn),
\end{equation}
where the right hand side only depends on $\rho = D_{44}\|\bomega\|$. Moreover, for the behavior of the eigenvalues we have

\begin{equation}
\mathrm{Im}(\lambda_\bomega^{l,m}) = \frac12 \dfrac{((i\mB_\bomega^2 - i (\mB_\bomega^2)^*)\Psilmom,\Psilmom)}{(\Psilmom,\Psilmom)} = r \frac{(\mathcal{M}\Psilmom, \Psilmom)}{(\Psilmom,\Psilmom)} = \mathcal{O}(r).
\end{equation}

\begin{figure}[t!]
   \centering
   \includegraphics[width=0.9\textwidth]{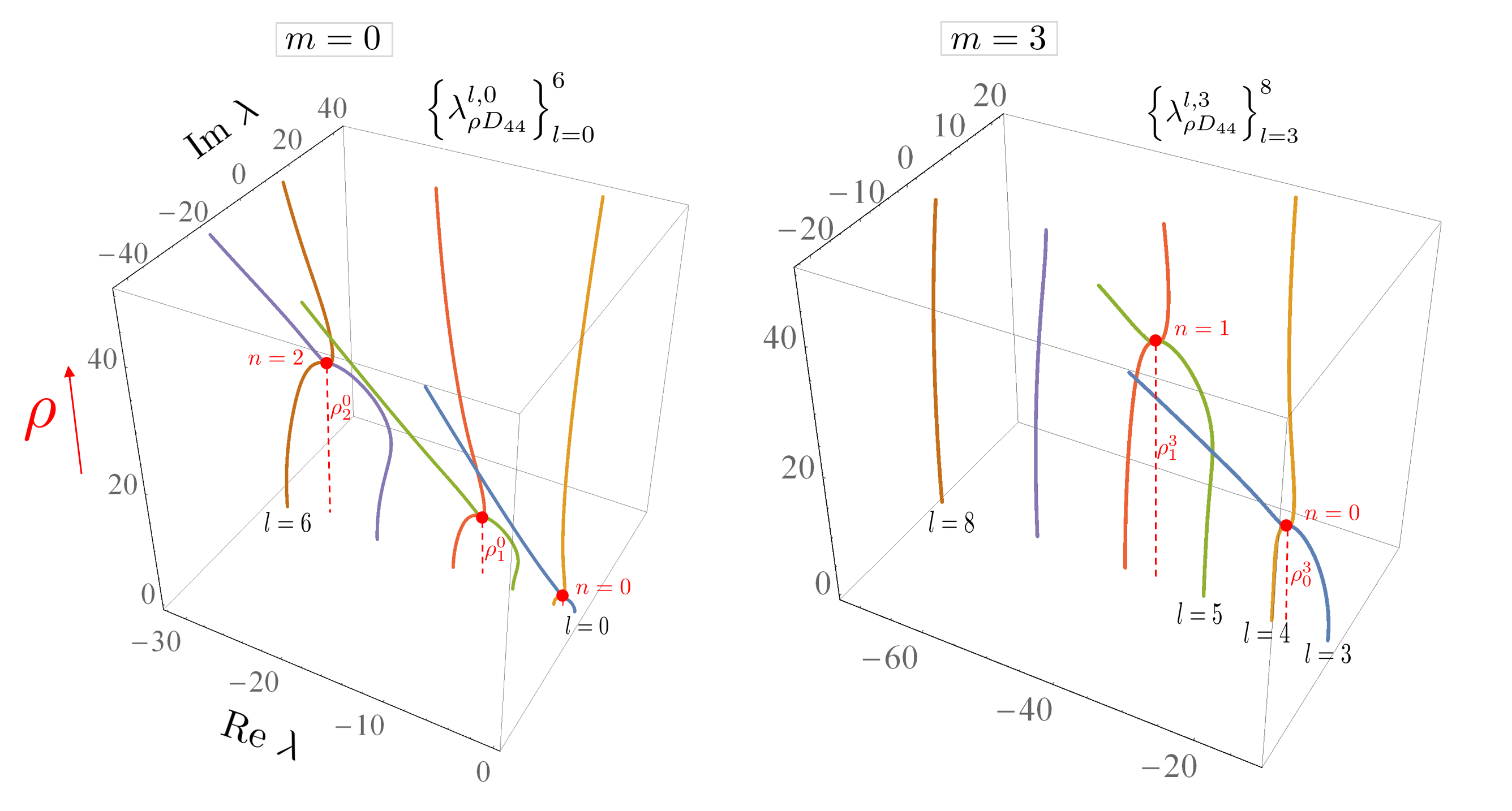}
 \caption{Plot of the real and imaginary parts of the first 6 eigenvalues of $\mB_\rho^m$ for $m = 0$ (left) and $m = 3$ (right). Note that all eigenvalues are real for sufficiently small $\rho$. When $\rho$ increases, each time two eigenvalues collide and branch into two complex conjugate eigenvalue pairs. Comparing the left and right figure, we note that the higher $m$, the higher the values for $\rho$ where this branching occurs. Moreover, we have that $\mathrm{Im}(\lambda_{\rho D_{44}}) \sim \mathcal{O}(\rho D_{44})$.}\label{fig:branchingofeigenvalues}
 \end{figure}
In particular, for $m$ fixed and $\rho \leq \rho_0^m$, all eigenvalues are real, and hence $(\mM \Psilmom, \Psilmom) = 0$. We use Lemma \ref{le:eigenvalues} in the next theorem, which proves the solution of the time-integrated differential equation is unique.

\begin{theorem}\label{th:resolventseries} Let $\bomega \in \mathbb{R}^3$ be given. Let $\alpha >0$. Then
\begin{enumerate}
\item (Existence of the resolvent) the resolvent operator $(\mB^2_\bomega - \alpha I)^{-1}$ exists, i.e., the unbounded operator $(\mB^2_\bomega - \alpha I): \mathbb{H}_{2}(S^{2}) \rightarrow \mathbb{L}_{2}(S^{2})$ is invertible.

\item (Completeness) there exists a complete basis of generalized eigenfunctions of the operator $\mB^2_\bomega$. If $||\bomega|| \neq D_{44} \rho^m_n$, these generalized eigenfunctions are true eigenfunctions and coincide with the eigenfunctions $\Psilmom$ derived above. So for $||\bomega|| \neq D_{44} \rho_n^m$ the resolvent operator $(\mB_\bomega^2 - \alpha I)^{-1}$ is diagonalizable.

\end{enumerate}
\end{theorem}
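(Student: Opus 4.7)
The plan is to establish both parts by viewing $\mB^{2}_{\bomega}$ as a bounded skew-symmetric perturbation of the self-adjoint operator $A:=D_{44}\Delta_{S^{2}}$ on $\mathbb{H}_{2}(S^{2})$, whose spherical harmonic eigendecomposition is known and whose resolvent is compact.

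\textbf{Part 1 (existence of the resolvent).} First I would compute
\begin{equation*}
\mathrm{Re}\bigl((\mB^{2}_{\bomega}-\alpha I)f,f\bigr)_{\mathbb{L}_{2}(S^{2})} = (Af,f) - \alpha\|f\|^{2} \leq -\alpha\|f\|^{2},
\end{equation*}
which for $\alpha>0$ forces $\ker(\mB^{2}_{\bomega}-\alpha I)=\{0\}$. Writing $\mB^{2}_{\bomega}-\alpha I = (A-\alpha I)\bigl(I + (A-\alpha I)^{-1}(-ir\mathcal{M})\bigr)$ and noting that $(A-\alpha I)^{-1}$ is compact (the spherical harmonic eigenvalues grow like $l(l+1)$) while $-ir\mathcal{M}$ is bounded, the product $(A-\alpha I)^{-1}(-ir\mathcal{M})$ is compact. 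The Fredholm alternative then promotes injectivity to surjectivity and delivers boundedness of the inverse on $\mathbb{L}_{2}(S^{2})$.

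\textbf{Part 2 (completeness of generalized eigenfunctions).} Here I would decompose along the $\gamma$-Fourier modes $m$, reducing the problem to the one-dimensional operator $\mB^{m}_{\rho}=\mB^{m}_{0}-i\rho\mathcal{M}$ on $\mathbb{L}_{2}([0,\pi],\sin\beta\,d\beta)$. The unperturbed part $\mB^{m}_{0}$ is self-adjoint with eigenvalues $-l(l+1)$ for $l\geq|m|$, so $\sum_{l\geq|m|}(l(l+1))^{-p}<\infty$ for all $p>1$ and $(\mB^{m}_{0}-\alpha I)^{-1}$ lies in every Schatten class $\mathcal{S}_{p}$ with $p>1$. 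Applying the Keldysh completeness theorem for relatively compact perturbations of self-adjoint operators with Schatten-class resolvent, in the form cited via \cite{kato_operators_1976,makin_summability_2012}, yields completeness of the system of root vectors of $\mB^{m}_{\rho}$ inside each $m$-sector. Since the sectors $\{e^{im\gamma}/\sqrt{2\pi}\}_{m\in\mathbb{Z}}$ are mutually orthogonal in the $\gamma$ variable and together span $\mathbb{L}_{2}(S^{2})$, reassembly produces the claimed complete basis of generalized eigenfunctions of $\mB^{2}_{\bomega}$.

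\textbf{Coincidence with $\Psilmom$ off the branching radii.} By Lemma \ref{le:eigenvalues} the set of radii $\{\rho^{m}_{n}\}$ where eigenvalues of $\mB^{m}_{\rho}$ collide is the zero set of the analytic function in \eqref{eq:analyticfunc}; away from it the algebraic and geometric multiplicities coincide, because nontrivial Jordan structure can emerge only at collision points (the multiplicity of $\lambda(\rho)$ depends continuously on $\rho$). Consequently the generalized eigenfunctions reduce to the true eigenfunctions $\Psilmom$ constructed by separation of variables, the pairing $(\Psilmom,\overline{\Psilmom})_{\mathbb{L}_{2}(S^{2})}\neq 0$ holds, so the reciprocal basis $\overline{\Psilmom}/(\Psilmom,\overline{\Psilmom})$ is well-defined and $(\mB^{2}_{\bomega}-\alpha I)^{-1}$ is diagonalizable, as used in \eqref{eq:resolventseriesCC}.

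\textbf{Main obstacle.} The delicate step is the invocation of Keldysh-type completeness: one must verify the Schatten-class hypothesis on the unperturbed resolvent, control growth of $(\mB^{m}_{\rho}-z I)^{-1}$ along rays outside a thin sector containing the spectrum (using the bound $|\mathrm{Im}\,\lambda_{\bomega}^{l,m}|=\mathcal{O}(r)$), and perform the $m$-sector decomposition uniformly so that no root vectors are lost when reassembling on $S^{2}$. The second subtlety, namely that Jordan blocks appear precisely at the discrete collision radii, relies on analyticity of the characteristic function $F(\lambda,\rho)$ from Lemma \ref{le:eigenvalues}, which prevents accumulation of branching points and therefore yields diagonalizability almost everywhere in $\bomega$.
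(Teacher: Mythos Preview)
Your proposal is correct and shares the paper's overall architecture: treat $\mB^{2}_{\bomega}$ as a bounded perturbation of the self-adjoint $D_{44}\Delta_{S^{2}}$, and decompose into $m$-sectors for completeness. The differences lie in which classical tools are invoked. For Part~1, the paper proves the lower bound $\|(\mB^{2}_{\bomega}-\alpha I)f\|^{2}\geq\alpha^{2}\|f\|^{2}$ directly and then obtains surjectivity from the special adjoint identity $(\mB^{2}_{\bomega})^{*}f=\overline{\mB^{2}_{\bomega}\bar{f}}$ combined with the closed range theorem; your route through the Fredholm alternative for $I+(A-\alpha I)^{-1}(-ir\mathcal{M})$ is equally valid and somewhat more robust, since it does not depend on that conjugation symmetry. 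For Part~2, the paper cites Kato's Theorem~4.15a (Ch.~V, \S5) for bounded perturbations of self-adjoint operators with compact resolvent, whereas you explicitly verify the Schatten-class condition and appeal to a Keldysh-type result; these are two packagings of the same completeness principle. The paper also adds a short Riesz basis and frame operator argument to pin down the reciprocal system as $\overline{\Psilmom}/(\Psilmom,\overline{\Psilmom})$, which you reach by the equivalent observation that $(\Psilmom,\overline{\Psilmom})\neq 0$ away from the branching radii.
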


\begin{proof}
\begin{enumerate}
\item (Existence of the resolvent) Let $\bomega \in \mathbb{R}^3$ and $\alpha > 0$ given, $r = ||\bomega||$. Injectivity of $(\mB^2_\bomega - \alpha I)$ follows from the fact that the resolvent operator is bounded from below. For $f \in \mathbb{H}_2(S^2)$, we have:

\begin{multline}
((\mB^2_\bomega - \alpha I)f, (\mB^2_\bomega - \alpha I)f) = ((D_{44} \Delta_{S^2} + i r \mM)f, (D_{44} \Delta_{S^2} + i r \mM)f) - 2 D_{44} \alpha (\Delta_{S^2}f,f) + \alpha^2 (f,f) \\
\geq \alpha^2 (f,f) \geq 0.
\end{multline}

Hence $(\mB_\bomega^2 - \alpha I)f = 0 \implies f = 0$. 

To show surjectivity, we start by noting that in general, $(\mathcal{R}(\mB^2_\bomega - \alpha I))^{\bot} = \mathcal{N} ((\mB^2_\bomega)^* - \alpha I)$. Now let $f \in (\mathcal{R}(\mB^2_\bomega - \alpha I))^{\bot}$ and $f \in \mathbb{H}_{2}(S^2)$, then 

\begin{equation}
\begin{aligned}
(\mB_\bomega^2)^* f - \alpha f = 0 \; \Longleftrightarrow \; \overline{\mB^2_\bomega \bar{f}} - \alpha f = 0 \; \Longleftrightarrow \; \mB^2_\bomega \bar{f} = \alpha \bar{f} \; \Longleftrightarrow \; (\mB^2_\bomega - \alpha I)\bar{f} = 0, 
\end{aligned}
\end{equation}
but injectivity of $\mB_\bomega^2 - \alpha I$ then implies that $\bar{f} = 0 = f$. It follows that $(\mathcal{R}(\mB^2_\bomega - \alpha I))^{\bot}$ equals $\{0\}$ and because of closedness of both $\mB^2_\bomega$ and $I$, the surjectivity follows from the closed range theorem \cite{yosida_functional_1980}. Hence $(\mB_\bomega^2 - \alpha I)^{-1}$ exists.

\item (Completeness) We first consider the operator $\mB_\rho^m$. By direct computation it can be shown that the multiplication operator $\rho \mM$ is bounded, with $\rho$ as a bound for the operator norm. For $\rho = 0$, $\mB_0^m$ has simple eigenvalues. Thereby, according to Kato \cite[Ch. V, Sect. 5, Th. 4.15a]{kato_operators_1976} there exists a complete basis of generalized eigenfunctions. Then $\mB_\rho^m$ is closed with compact resolvent $(\mB_\rho^m - \alpha I)^{-1}$.

In the case of $\rho \neq \rho_n^m$, we still need to show that the basis of generalized eigenfunctions correspond to the actual eigenfunctions $\Psilmom$ as computed above. For $\|\bomega\| < D_{44}\rho^*$ this is clear. For $\rho \geq \rho^*$, it follows by analytic extension in $\rho$, which is exactly what happens when we write down the eigenfunctions as a series of Legendre functions as in (\ref{eq:GSlmrho}), as this boils down to a Taylor series in $\rho$.

Furthermore, the functions $GS^{l,m}_\rho$ are uniformly bounded on $[-1,1]$ and thereby form a Riesz basis \cite{makin_summability_2012}, which makes the reciprocal basis unique. The reciprocal basis has the property that

\begin{equation}
(\Psi_\bomega^{l,m} , \Psi_{l',m'}^{\bomega}) = \delta_{l'}^l \delta_{m'}^m, \qquad l,l' \in \mathbb{N}_0, \; m, m' \in \mathbb{Z}, \; |m|\leq l, \; |m'| \leq l'.
\end{equation}
In fact we have $\Psi_\bomega^{l,m} = \mS^{-1}\Psi^\bomega_{l,m}$, where $\mS : \mathbb{L}_2(S^2) \rightarrow \mathbb{L}_2(S^2)$ denotes the frame operator, given by

\begin{equation}
\mS f = \sumlm (f, \Psi^{\bomega}_{l,m})\Psi_{l,m}^{\bomega}.
\end{equation}

From the properties (\ref{eq:adjointproperty}) and (\ref{eq:orthogonalityofpsi}) it follows that the reciprocal basis $\Psi_\bomega^{l,m}$ of $\Psilmom$ is linearly proportional to the conjugate basis, which implies that the reciprocal basis $\{\Psi_\bomega^{l,m} \}$ is complete. Therefore, for any $f \in \mathbb{L}^2(S^2)$, there is the convergent series representation

\begin{equation}
(\mB_\bomega^2 - \alpha I)^{-1}f = \sumlm \frac{1}{\lambda_r^{l,m} - \alpha} \frac{(f, \overline{\Psi^{\bomega}_{l,m}}) \Psi^{\bomega}_{l,m}}{(\Psi^{\bomega}_{l,m}, \overline{\Psi^{\bomega}_{l,m}})}, \qquad D_{44}||\bomega|| \neq \rho^m_n.  
\end{equation} 

Hence for $\rho \neq \rho_n^m$, $(\mB_\bomega^2 - \alpha I)^{-1}$ is diagonalizable with eigenfunctions $\Psi_{l,m}^\bomega$ and eigenvalues $1/(\lambda_r^{l,m} - \alpha)$ with strictly negative real part.   

\end{enumerate}
\end{proof}

\subsubsection{Main Theorem}
The following theorem summarizes the result regarding eigenfunctions of the operator $\mB^2_{\bomega}$ corresponding to the generator $Q_2$:

\begin{theorem}\label{th:maintheoremCC}
The eigenfunctions $\Psilmom$ of the operator $\mB^2_{\bomega}$ in (\ref{eq:operatorBwCC}) are given by:

\begin{equation}
\Psilmom(\beta,\gamma) = GS_\rho^{l,m}(\cos \beta) C_m(\gamma).
\end{equation}

Here $GS_\rho^{l,m}$ is the eigenfunction, given by \eqref{eq:GSlmrho}, of the generalized spheroidal wave equation. For almost every $\bomega \in \mathbb{R}^3$, these eigenfunctions form a complete bi-orthogonal system. Therefore the solution of the convection-diffusion equation on $\quot$ is given by:


\begin{align}
P^2_\alpha(\by,\bn) = \left( R^2_\alpha \ast_{\quot} U \right) (\by,\bn),
\end{align}
with
\begin{equation}
R^2_\alpha(\by,\bn) = \mFR^{-1}\left( \bomega \mapsto \sumlm  \frac{\alpha}{\alpha - \lambdalmr}\frac{\overline{\Psilmom(\be_z)} \overline{\Psilmom(\bn)}}{(\Psilmom,\overline{\Psilmom})} \right)(\by),
\end{equation}
where the series converges in $\mathbb{L}_2(\quot)$-sense. With $\lambdalmr = -D_{44} l(l+1) + \mathcal{O}(r)$ we denote the countable eigenvalues of $\mB^2_{\bomega}$, with $\|\bomega \| = r = \rho D_{44}$. The eigenvalues are disjoint for $\rho \neq 0$, and $\rho \neq \rho^m_n$.
\end{theorem}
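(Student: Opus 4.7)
The plan is to assemble three ingredients already established in Section~\ref{se:exactCC}: first, the separation-of-variables reduction leading to \eqref{important}; second, the completeness and diagonalizability result of Theorem~\ref{th:resolventseries}; and third, the adjoint identity \eqref{eq:adjointproperty} together with the orthogonality dichotomy \eqref{eq:orthogonalityofpsi} which together identify the reciprocal basis of $\{\Psilmom\}$ with its complex conjugate (normalized by $(\Psilmom,\overline{\Psilmom})$).

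First I would verify the stated product form of $\Psilmom$. The $\gamma$-equation, together with $2\pi$-periodicity, forces $C(\gamma) = C_m(\gamma) = e^{im\gamma}/\sqrt{2\pi}$. After the substitution $x = \cos\beta$, the $\beta$-equation \eqref{eq:CCrewritten} becomes the generalized spheroidal wave equation \eqref{eq:maineqCC}, whose $\mathbb{L}_2([-1,1])$-normalizable solutions are by definition the functions $GS_\rho^{l,m}$ constructed in \ref{ap:GSWF}. This gives $\Psilmom(\beta,\gamma)=GS_\rho^{l,m}(\cos\beta)\, C_m(\gamma)$ together with the relation $\lambdalmr = -D_{44}\tilde{\lambda}_\rho^{l,m}$ via \eqref{eq:relationrholambdarho}.

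Next I would invoke Theorem~\ref{th:resolventseries}: for $\|\bomega\| \notin \{D_{44}\rho_n^m\}_{m,n}$ the system $\{\Psilmom\}$ is a Riesz basis of $\mathbb{L}_2(S^2)$ and, by \eqref{eq:orthogonalityofpsi} plus simplicity of eigenvalues off the branch points, its biorthogonal dual is $\Psi_\bomega^{l,m} = \overline{\Psilmom}/(\Psilmom,\overline{\Psilmom})$. Hence any $\hat{U}(\bomega,\cdot)\in\mathbb{L}_2(S^2)$ expands as
\begin{equation}
\hat U(\bomega,\bn) = \sumlm \frac{(\hat U(\bomega,\cdot),\overline{\Psilmom})}{(\Psilmom,\overline{\Psilmom})}\Psilmom(\bn),
\end{equation}
and applying $-\alpha(\mB_\bomega^2-\alpha I)^{-1}$ multiplies the $(l,m)$-coefficient by $\alpha/(\alpha-\lambdalmr)$. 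Formally setting $\hat U(\bomega,\cdot)=\delta_{\be_z}$ replaces the inner product by the point evaluation $\overline{\Psilmom(\be_z)}$ and produces the displayed series for the Fourier-domain resolvent kernel $\hat R_\alpha^2$. Applying $\mFR^{-1}$ and recognizing the result as an $\mathbb{R}^3\rtimes S^2$-convolution (exactly as in \eqref{eq:Whatseriesint}) yields the stated expression $P_\alpha^2 = R_\alpha^2 \ast_{\quot} U$, and the exceptional frequency set $\bigcup_{m,n}\{\|\bomega\|=D_{44}\rho_n^m\}$ is a countable union of spheres, hence negligible for the inverse Fourier integral.

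For the eigenvalue asymptotics $\lambdalmr = -D_{44}l(l+1) + \mathcal{O}(r)$, I would regard the family $r\mapsto \mB_\bomega^2 = D_{44}\Delta_{S^2} - ir\mathcal{M}$ as a self-adjoint-plus-bounded holomorphic family of type (A) in Kato's sense. At $r=0$ the eigenvalues $-D_{44}l(l+1)$ are simple within each $m$-sector with eigenfunctions $Y^{l,m}$, so standard analytic perturbation gives local analyticity of each branch $r\mapsto\lambdalmr$ with first-order correction $-ir(\mathcal{M}Y^{l,m},Y^{l,m})$; the $\mathcal{O}(r)$ bound then extends on compact $r$-intervals from the spectral inclusion $|\lambdalmr+D_{44}l(l+1)|\leq r\|\mathcal{M}\|=r$ used in Lemma~\ref{le:eigenvalues}. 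Disjointness for $\rho\neq 0,\rho_n^m$ is built into the definition of the branch points.

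The main obstacle I expect is making rigorous sense of the series for $\hat R_\alpha^2$, because $\delta_{\be_z}\notin\mathbb{L}_2(S^2)$ so the point-evaluation $\overline{\Psilmom(\be_z)}$ cannot be read off a direct Riesz expansion. The cleanest fix is to define $\hat R_\alpha^2$ intrinsically as $-\alpha(\mB_\bomega^2-\alpha I)^{-1}\delta_{\be_z}$ in the distributional sense, use the uniform resolvent bound $\|(\mB_\bomega^2-\alpha I)^{-1}\|\leq\alpha^{-1}$ proved inside Theorem~\ref{th:resolventseries}, and then justify the termwise series by applying it to a mollified initial condition $\delta_{\be_z}^{(\varepsilon)}\in\mathbb{H}_2(S^2)$, using that $\lambdalmr\sim -D_{44}l(l+1)$ makes the tail uniformly summable in $\varepsilon$, and passing to the limit $\varepsilon\downarrow 0$ inside the $\bomega$-integral by dominated convergence.
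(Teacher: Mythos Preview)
Your proposal is correct and follows essentially the same route as the paper: the theorem is stated as a summary of the preceding Section~\ref{se:exactCC}, so its proof consists precisely of assembling the separation-of-variables reduction \eqref{important}, Lemma~\ref{le:eigenvalues} and Theorem~\ref{th:resolventseries}, together with the adjoint relation \eqref{eq:adjointproperty} and the dichotomy \eqref{eq:orthogonalityofpsi} to identify the reciprocal basis with $\overline{\Psilmom}/(\Psilmom,\overline{\Psilmom})$. Your mollification argument for handling $\delta_{\be_z}\notin\mathbb{L}_2(S^2)$ and your explicit use of the spectral inclusion bound $|\lambdalmr+D_{44}l(l+1)|\leq r$ for the $\mathcal{O}(r)$ asymptotics are in fact more careful than what the paper writes out.
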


\begin{remark}
\begin{itemize}
\item For all $\bomega \in \mathbb{R}^3$, including the cases $||\bomega|| = D_{44} \rho_n^m$, we can decompose the resolvent into a complete basis of generalized eigenfunctions. In fact the projection onto the generalized eigenspace $E_{\tilde{\lambda}_{\rho}^{l,m}}$ is given by

\begin{equation}
P_{E_{\tlambdalmrho}} = \frac{1}{2\pi i} \oint_J (z I - \mB_\rho^m)^{-1} \rmd z, 
\end{equation} 
with $J$ a Jordan curve enclosing only $\tlambdalmrho$ in positive direction, where we recall definition (\ref{def:operatorBrhom}). 

\item Now if $\rho \neq \rho_m^n$, we have one-dimensional eigenspaces, spanned by $GS_\rho^{l,m}$:

\begin{equation}
E_{\tlambdalmrho} = \mathrm{span} \left\{ GS_\rho^{l,m} \right \}  = \mathcal{N}(\mB_\rho^m - \tlambdalmrho I).
\end{equation}
In the case that $\rho = \rho_n^m$, we have instead:

\begin{equation}
E_{\tlambdalmrho} = \mathrm{span} \left\{ \mathcal{N} (\mB_\rho^m - \tlambdalmrho I), \mathcal{N}(\mB_\rho^m - \tlambdalmrho I)^2 \right\} .
\end{equation}
At $\rho = \rho_n^m$, the algebraic multiplicity of $\tlambdalmrho$ in $\mB_\rho^m |_{E_{\tlambdalmrho}}$ is 2, whereas the geometric multiplicity is 1.
\end{itemize}
\end{remark}

\subsubsection{Time integration with Gamma-distributed traveling times}
The kernel for the time-integrated process with exponentially distributed traveling times has a singularity at the origin $(\by,\bn) = (\mathbf{0},\be_z)$. It is possible to derive a relation between the resolvent kernel and the process with Gamma-distributed traveling times, that does not suffer from this singularity. Thanks to the exact solution representation of the kernels for both processes, we obtain the following refinement and generalization of \cite[Thm. 12]{duits_morphological_2012}.

\begin{theorem}\label{thm:gammadist}
The kernel of the time-integrated diffusion $(i=1)$ and convection-diffusion $(i=2)$ process with the assumption of $\Gamma$-distributed evolution time $T$, i.e., $T \sim \Gamma(k,\alpha)$, $k \in \mathbb{N}$, with $E(T)= \frac{k}{\alpha}$, is related to repeated convolutions of the resolvent kernel as follows:

\begin{equation}
\int \limits_{0}^{\infty} K_t^i \; \Gamma(t; k, \alpha) \rmd t = R^i_\alpha \ast_{\quot}^{(k-1)} R^i_\alpha, 
\end{equation}
where $\Gamma(t; k, \alpha)$ is the pdf of $T$. For the case $i = 1$, this kernel does not have a singularity in the origin when $k \geq 2$. For the case $i = 2$, this holds when $k \geq 4$.
\end{theorem}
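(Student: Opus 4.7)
The proof splits naturally into two independent parts: the convolution identity, which follows from the semigroup structure of the evolution, and the removal of the singularity, which requires spectral analysis of the small-time behavior of $K_t^i$ at the origin.

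For the convolution identity, I would start from the exponential-time representation $R^i_\alpha = \alpha \int_0^\infty e^{-\alpha t} K_t^i\, \rmd t$ recalled in Section \ref{se:stopro}, and use the semigroup identity $K_s^i \ast_{\quot} K_t^i = K_{s+t}^i$, which is immediate from $\Upsilon_t = e^{t Q_i}$ being a strongly continuous semigroup on $\mathbb{L}_2(\quot)$ applied to the kernel equation $K_t^i = \Upsilon_t \delta_{(\mathbf{0},\be_z)}$. Then
\begin{align*}
R^i_\alpha \ast_{\quot}^{(k-1)} R^i_\alpha &= \alpha^k \int_{[0,\infty)^k} e^{-\alpha(t_1+\cdots+t_k)}\, K^i_{t_1} \ast_{\quot} \cdots \ast_{\quot} K^i_{t_k}\, \rmd t_1 \cdots \rmd t_k \\
&= \alpha^k \int_{[0,\infty)^k} e^{-\alpha(t_1+\cdots+t_k)}\, K^i_{t_1+\cdots+t_k}\, \rmd t_1 \cdots \rmd t_k,
\end{align*}
where Fubini is justified by nonnegativity of $K_t^1$ and $\mathbb{L}_1$-integrability of $K_t^2$. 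Introducing the total time $t = t_1+\cdots+t_k$ and integrating $t_1,\ldots,t_{k-1}$ out over the $(k-1)$-simplex of volume $t^{k-1}/(k-1)!$ produces
\[
\alpha^k \int_0^\infty e^{-\alpha t}\, \tfrac{t^{k-1}}{(k-1)!}\, K_t^i\, \rmd t = \int_0^\infty \Gamma(t;k,\alpha)\, K_t^i\, \rmd t,
\]
which is the claimed identity.

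For the singularity statement, it suffices to control $\int_0^\infty \Gamma(t;k,\alpha)\, K_t^i(\mathbf{0},\be_z)\, \rmd t$, for which I would use the inverse spatial Fourier representation $K_t^i(\mathbf{0},\be_z) = (2\pi)^{-3} \int_{\mathbb{R}^3} \hat{K}_t^i(\bomega,\be_z)\, \rmd \bomega$ together with the eigenfunction expansions from Theorems~\ref{th:maintheoremCE} and~\ref{th:maintheoremCC}. The small-$t$ behavior $K_t^i(\mathbf{0},\be_z) \sim t^{-p_i}$ is governed by the large-$\|\bomega\|$ asymptotics of the eigenvalues $\lambda_r^{l,m}$: in the diffusion case $i=1$, the leading spheroidal eigenvalue satisfies $\lambda_r^{0,0} \sim -\sqrt{D_{33} D_{44}}\, r$, producing enough $\bomega$-decay to give $p_1<2$, so $\int_0^\infty t^{k-1-p_1}\, \rmd t$ is finite near zero for $k\geq 2$; in the convection-diffusion case $i=2$, the absence of any second-order spatial term means $\mathrm{Re}(\lambda_r^{l,m})$ remains essentially bounded in $r$ at fixed $l$ (only the imaginary part grows linearly, cf.\ Lemma~\ref{le:eigenvalues}), giving much weaker $\bomega$-decay, $p_2<4$, so $k\geq 4$ is required.

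The main obstacle is the precise asymptotic analysis in the non-self-adjoint case $i=2$, where one must track both the real part (controlling the decay rate of $\hat{K}_t^2$ in $\bomega$) and the imaginary part (controlling the oscillation in $\bomega \cdot \by$) of the generalized spheroidal eigenvalues in a way that survives the uniform summation over $(l,m)$. A cleaner alternative would be to invoke short-time asymptotics of hypoelliptic and convection-diffusion kernels on Lie group quotients (of Ben Arous--L\'eandre and Strichartz type), in which the critical $k$ is determined directly by the homogeneous dimension of the effective sub-Riemannian structure on $\quot$ associated with $Q_i$, bypassing explicit eigenvalue estimates.
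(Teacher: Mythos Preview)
Your argument for the convolution identity is correct and equivalent to the paper's: both use that the $\Gamma(k,\alpha)$ density is the $k$-fold $\mathbb{R}^+$-convolution of exponential densities together with the semigroup law $K^i_s \ast_{\quot} K^i_t = K^i_{s+t}$. The paper packages this via the Laplace transform, writing $\alpha^k(\alpha I - Q_i)^{-k}\delta_{(\mathbf{0},\be_z)} = R^i_\alpha \ast^{(k-1)}_{\quot} R^i_\alpha$; your simplex computation is the same thing unfolded.

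For the singularity statement your route differs from the paper's and contains a gap. The paper does not extract a small-time power law $K_t^i(\mathbf{0},\be_z)\sim t^{-p_i}$ at all. It performs the $t$-integration \emph{first}, so that in the spatial Fourier domain one is left with the explicit series
\[
\mFR\bigl[R^i_\alpha \ast_{\quot}^{(k-1)} R^i_\alpha\bigr](\bomega,\bn)
 \;=\; \alpha^{k}(\alpha I - \mB^{i}_{\bomega})^{-k}\delta_{\be_z}(\bn)
 \;=\; \sum_{l,m}\Bigl(\tfrac{\alpha}{\alpha-\lambda_r^{l,m}}\Bigr)^{\!k}\,\overline{\Phi(\be_z)}\,\Phi(\bn),
\]
and then checks directly for which $k$ this belongs to $\mathbb{L}_1(\rmd\bomega)$, using only the growth of $|\alpha-\lambda_r^{l,m}|$ in $r=\|\bomega\|$ (quadratic for $i=1$, linear for $i=2$). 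Continuity of the inverse Fourier transform, and hence absence of a singularity, then follows from the standard $\mathbb{L}_1$-criterion. No on-diagonal heat-kernel estimate is required.

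Your heat-kernel route is internally inconsistent: you state $\lambda_r^{0,0}\sim -\sqrt{D_{33}D_{44}}\,r$ (linear) and then claim this gives $p_1<2$. But a single mode with exponent $-cr$ already contributes $\int_{\mathbb{R}^3} e^{-crt}\,\rmd\bomega\sim t^{-3}$, which would force $p_1\ge 3$ and hence $k\ge 4$, not $k\ge 2$; the homogeneous-dimension alternative you propose yields the same conclusion, since the step-two sub-Riemannian structure generated by $\mA_3,\mA_4,\mA_5$ on $\quot$ has homogeneous dimension $7$ and predicts $K_t^1(\mathbf{0},\be_z)\sim t^{-7/2}$. Either your eigenvalue asymptotic or your inference to $p_1$ is wrong, and the proposal does not repair this. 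Swapping the order of integration, as the paper does, bypasses the on-diagonal asymptotics entirely and reduces the question to a single $r$-integral.
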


\begin{proof}
We refer to \ref{app:proof} for the proof.
\end{proof}




When comparing kernels for varying $k$, while keeping the expected value of the traveling time $T$ fixed, using a $k$ higher than the bounds given in the theorem above gives better shaped kernels in practice, with more outward mass and dampened singularities. We use this idea in the visualization of a time-integrated contour completion kernel in Section \ref{se:Fourierr3s2}, Fig. \ref{fig:exactCCkernel}.




\section{Matrix Representation of the Evolution and Resolvent in a Fourier Basis}\label{se:Fourierr3s2}

In \cite{duits_explicit_2008} a connection between the exact solutions and a numerical algorithm proposed by August and Zucker in \cite{august_curve_2001,august_sketches_2003} was established for the $SE(2) = \mathbb{R}^2 \rtimes S^1$ case. In this algorithm the Fourier transform on $\mathbb{L}_2(\mathbb{R}^2)$ and $\mathbb{L}_2(S^1)$ are applied subsequently. Here we again establish such a connection between the exact solutions and such a numerical algorithm in the 3D case. 

We have seen in the previous sections that the eigenfunctions $\Philmom$ and $\Psilmom$ are closely related to the spherical harmonic functions. Using the Fourier transform on $\mathbb{L}_2(S^2)$, we naturally obtain an ordinary differential equation in terms of spherical harmonic coefficients. In Sections \ref{se:diffusionnumerics} and \ref{se:convdiffusionnumerics} we derive this ODE for diffusion and convection-diffusion, respectively. At the end of Section \ref{se:diffusionnumerics} we include two remarks: one that shows that in deriving the ODEs we encounter a matrix representation of the evolution operator $\mB_\bomega^i$ and its resolvent, and one that makes the connection with the Fourier transform on SE(3) as presented in \ref{ap:Fourierse3}. Finally, we show in Section \ref{se:implementation} that with the procedure presented in this section it is straightforward to compute a numerical solution to the PDEs, since it only requires truncation of the order of the spherical harmonics.


\subsection{Hypo-Elliptic Diffusion}\label{se:diffusionnumerics}
Recall that after a Fourier transform in the spatial coordinates, we have the following system:

\begin{equation}\label{eq:augusteqCE}
\left\{ \begin{aligned}
\frac{\partial}{\partial t} \hW(\bomega,\bn,t)&= (D_{44} \Delta_{S^2} - D_{33}(\bomega \cdot \bn)^2) \; \hW(\bomega,\bn,t), \qquad t \geq 0,\\
\hW(\bomega,\bn,0) &= \hU(\bomega,\bn).
\end{aligned} \right.
\end{equation}
Since the spherical harmonics form a basis for $\mathbb{L}_2(S^2)$, we can expand $\hW(\bomega,\bn,t)$ for fixed $\bomega$ and $t$ in the spherical harmonic basis. Instead of doing this with standard spherical harmonics, we use a specific type of reoriented spherical harmonics, that we define as follows:

\begin{equation}\label{def:reorientedSH}
Y_{\bomega}^{l,m}(\bn) := Y^{l,m}_{\mathbf{0}}(\bR^T_{\bomega r^{-1}} \bn) := Y^{l,m}(\beta,\gamma)= \frac{\varepsilon^m}{\sqrt{2\pi}} P_l^m(\cos \beta) e^{i m \gamma}, \text{ with } \bn = \bn^{\bomega} (\beta,\gamma), 
\end{equation}
and $m \in \mathbb{Z}$, $l \in \mathbb{N}_0$, $|m| \leq l$. Recall Fig. \ref{fig:figureParametrization} and Eq. (\ref{def:coordinatechoice}). The rotation $\bR_{r^{-1} \bomega }$ in (\ref{def:reorientedSH}) is defined through its matrix

\begin{equation}
 \bR_{r^{-1} \bomega } = \begin{pmatrix}
\frac{(\bomega \times \be_z) \times \bomega}{||(\bomega \times \be_z) \times \bomega||} & \vline & \frac{\bomega \times \be_z}{||\bomega \times \be_z||} & \vline & r^{-1}\bomega
\end{pmatrix}.
\end{equation}
This rotation maps $\be_z$ onto $r^{-1} \bomega $ and $\be_y$ onto $\bomega \times \be_z/||\bomega \times \be_z||$, such that for every $\beta,\gamma$ we have that

\begin{equation}
\begin{aligned}
\bn^{\bomega} (\beta,\gamma) &= \bR_{ r^{-1} \bomega } \bn^{\be_z}(\beta,\gamma)\\
\bR_{r^{-1} \bomega }^T \bR_{\bomega r^{-1},\gamma} \bR_{\frac{\bomega \times \be_z}{||\bomega \times \be_z||}, \beta} \bR_{r^{-1} \bomega } \; \be_z &= \bR_{\be_z,\gamma} \bR_{\be_y,\beta} \be_z.
\end{aligned}
\end{equation}
Now for fixed $\bomega$ and $t$ we develop $\hW(\bomega,\cdot,t)$ in the basis $\{ \ylmom \,|\,l \in \mathbb{N}_0, m \in \mathbb{Z}, |m| \leq l \}$ as follows:

\begin{equation}\label{eq:hWseries}
\hW(\bomega,\bn,t) = \sumlm \hW^{l,m}(\bomega,t) \, \ylmom (\bn),
\end{equation}
where we define $\hW^{l,m}(\bomega,t)$ (and similarly $\hU^{l,m}(\bomega)$):

\begin{equation}\label{def:hWlm}
\hW^{l,m}(\bomega,t) := \int_{S^2} \hW(\bomega,\bn,t) \overline{\ylmom(\bn)} \, \rmd \sigma(\bn).
\end{equation}
Our goal is the recursion in Eq. (\ref{eq:hWlm}) for $\hW^{l,m}(\bomega,t)$, that we can use to obtain a solution for these coefficients. To this end we start by substituting (\ref{def:hWlm}) into our differential equation (\ref{eq:augusteqCE}):

\begin{equation}\label{eq:intermediateWlm}
\begin{aligned}
\sumlm \ylmom(\bn) \, \partial_t \hW^{l,m}(\bomega,t) &= \sumlm (D_{44} \Delta_{S^2} - D_{33}r^2 \cos^2 \beta) \ylmom(\bn) \, \hW^{l,m}(\bomega,t) \\
&= \sumlm (-D_{44} l(l+1) - D_{33} r^2 \cos^2 \beta) \ylmom(\bn) \, \hW^{l,m} (\bomega,t).
\end{aligned}
\end{equation}
We aim at rewriting the term $\cos^2 \beta \, Y^{l,m}(\beta,\gamma)$, see (\ref{eq:cos2betaYlm}) below. In \cite{olver_nist_2010} the following identity for Legendre functions is given: 


\begin{equation}\label{eq:relationSH}
 x P_{l}^{m}(x)  = \frac{l - m + 1}{2l +1}P_{l +1}^{m}(x) + \frac{l + m}{2l+1} P_{l - 1}^m(x) =: \xi^{l,m}P_{l+1}^{m}(x) + \nu^{l,m}P_{l-1}^m(x), \qquad m \geq 0. 
\end{equation}
Using this identity twice, we get:

\begin{equation}
\begin{aligned}
x^2 P_{l}^{m}(x) &= \frac{(l-m+1)(l-m+2)}{(2l+3)(2l+1)} P_{l+2}^{m}(x) + \frac{(2l(l+1)-2m^2 -1)}{4l(l+1)-3}P_{l}^{m}(x) + \frac{(l+m-1)(l+m)}{(2l-1)(2l+1)}P_{l-2}^{m}(x) \\
&=: \zeta^{l,m} P_{l+2}^{m}(x) + \eta^{l,m} P_{l}^{m}(x) + \alpha^{l,m} P_{l-2}^m(x), \qquad m \geq 0.
\end{aligned}
\end{equation}
However, this identity only holds for the Legendre functions where the normalization factor $N^{l,m}$ is not included in the definition. The identity needs to be adapted, which is done as follows:

\begin{equation}\label{eq:relationPlm}
x^2  \; P_l^m(x) = \frac{N_{l,m}}{N_{l+2,m}}\zeta^{l,m} P_{l+2}^m(x) + \eta^{l,m} P_l^m(x) + \frac{N_{l,m}}{N_{l-2,m}} \alpha^{l,m} P_{l-2}^m(x). 
\end{equation}

This can directly be applied to the term $\cos^2 \beta Y^{l,m}(\beta,\gamma)$. We define the tridiagonal matrix $\bM_1^m$ as follows:

\begin{equation}\label{eq:definitionM1}
\begin{aligned}
&\left((\bM_1^m)^T \right)_{l,l'= |m|,|m+1|,\dots} := \\
\bN^m &\begin{pmatrix}
\eta^{|m|,|m|} & 0 & \zeta^{|m|,|m|} &  &  &  &   \\
0 & \eta^{|m|+1,|m|} & 0 &  \zeta^{|m|+1,|m|} &  & \text{\textbf{\large{O}}} &  \\
\alpha^{|m|+2,|m|} & 0 & \eta^{|m|+2,|m|} & 0 & \zeta^{|m|+2,|m|} &  &   \\
  & \alpha^{|m|+3,|m|} & 0 & \eta^{|m|+3,|m|} & 0 & \zeta^{|m|+3,|m|} &   \\
 & \text{\textbf{\large{O}}} & \ddots & 0 & \ddots & 0 & \ddots 
\end{pmatrix} (\bN^m)^{-1},
\end{aligned}
\end{equation}
with 

\begin{equation}\label{eq:normalizationMatrix}
\bN^m = \text{diag}(N^{|m|,m}, N^{|m+1|,m, \dots}).
\end{equation} 
Then we can write the relation for spherical harmonics in the form:

\begin{equation}\label{eq:cos2betaYlm}
\cos^2 \beta \; \Ylm (\beta,\gamma) = \sum_{l'=|m|}^\infty ((\bM_1^m)^T)_{l l'} \Ylpm(\beta,\gamma).
\end{equation}
Because matrix $\bM_1^m$ has only three nonzero diagonals, at most three terms appear in the sum on the right. Since we consider these matrices for fixed $m$, it is more natural to change the order of summation in Eq. (\ref{eq:intermediateWlm}). With (\ref{eq:cos2betaYlm}) we can rewrite (\ref{eq:intermediateWlm}) as:

\begin{equation}\label{eq:diffCEcoeff}
\begin{aligned}
\summl  \ylmom(\bn) \partial_t \hW^{l,m}(\bomega,t) =& \summl - D_{44} l(l+1) \ylmom(\bn) \hW^{l,m}(\bomega,t) \\ &- D_{33} r^2 \summl \left(\sum_{l'=|m|}^\infty ((\bM_1^m)^T)_{l l'} Y_{\bomega}^{l',m}(\bn) \hW ^{l,m}(\bomega,t) \right).
\end{aligned}
\end{equation}
Equating the coefficients in this equation, the functions $\hW^{l,m}$ can be expressed recursively according to:

\begin{equation}\label{eq:hWlm}
\begin{aligned}
\frac{\partial}{\partial t}\hW^{l,m} (\bomega,t) &= -D_{44} l(l+1)\hW^{l,m}(\bomega,t) \\ & \qquad - D_{33} r^2\left((\bM_1^{m})^T_{l-2,l} \hW^{l-2,m}(\bomega,t) + (\bM_1^{m})^T_{l,l} \hW^{l,m}(\bomega,t) + (\bM_1^{m})^T_{l+2,l} \hW^{l+2,m}(\bomega,t) \right) \\
&= - D_{44} l(l+1)\hW^{l,m}(\bomega,t) \\ & \qquad - D_{33} r^2\left(\zeta^{l-2,m} \hW^{l-2,m}(\bomega,t) + \eta^{l,m} \hW^{l,m}(\bomega,t) + \alpha^{l+2,m} \hW^{l+2,m}(\bomega,t) \right)
\end{aligned}
\end{equation}
In other words, for $m$ fixed, we need to solve the following system:

\begin{equation}\label{eq:matrixeqWm}
\left\{\begin{aligned}
\frac{\partial}{\partial t}\bhw^m(\bomega,t) &= - D_{33} r^2 \bM_1^m \bhw^m(\bomega,t) - D_{44} \mathbf{\Lambda}^m \bhw^m(\bomega,t), \qquad \bhw^m(\bomega,t) = (\hW^{l,m}(\bomega,t))_{l=|m|}^{\infty}, \\
\bhw^m(\bomega,0) &= \hat{\mathbf{u}}^m(\bomega), \qquad \hat{\mathbf{u}}^m(\bomega,t) = (\hU^{l,m}(\bomega,t))_{l=|m|}^{\infty}
\end{aligned}\right. 
\end{equation}
with $\mathbf{\Lambda}^m = \text{diag}(|m|(|m|+1), (|m|+1)(|m|+2), \dots)$.
The solution of this system, being a matrix-vector representation of (\ref{eq:augusteqCE}), is given by

\begin{equation}\label{eq:whatexp}
\bhw^m(\bomega,t) = \exp \{( -D_{33} r^2 \bM_1^m - D_{44} \mathbf{\Lambda}^m)t \} \hat{\mathbf{u}}^m(\bomega).
\end{equation}

\begin{remark}
There is a direct connection between the exact solution presented in Theorem~\ref{th:maintheoremCE}/Corollary~\ref{corr:relationSHSWE} and the solution found via Eq. (\ref{eq:whatexp}). In fact, in case of the exact solutions, the generator and thereby also the evolution operator in (\ref{eq:whatexp}), are diagonalized by the solutions of Eq.~\!(\ref{eq:finaleigvproblem}) in \ref{ap:SWF}. To clarify this observation, We note that Eq.~\!(\ref{eq:finaleigvproblem}) is the eigenvector problem corresponding to the eigenfunction problem in Eq.~\!(\ref{eq:eigenfunctions}), while restricting operator $\mB_1^{\bomega}$ to the span of spherical harmonics $Y^{l,m}$, $l \geq |m|$, for $m \in \mathbb{Z}$ fixed.
\end{remark}

\begin{remark}
Eq. (\ref{eq:hWlm}) follows from Eq. (\ref{eq:CEdiff}) by application of the operator:

\begin{equation}
(\mFS \otimes \mathbbm{1}_{\mathbb{L}_2(\mathbb{R}^3})) \circ (\mathbbm{1}_{\mathbb{L}_2(\mathbb{R}^3)} \otimes \mathcal{U}_{\bR_{\bomega r^{-1}}}) \circ (\mFR \otimes \mathbbm{1}_{\mathbb{L}_2(S^2})),
\end{equation}
that can be roughly formulated as $\mFS \circ \mathcal{U}_{\bR_{\bomega r^{-1}}} \circ \mFR$, with $\mathcal{U}_{\bR_{\bomega r^{-1}}}$ the left-regular representation $\mathcal{U}_{\bR} \phi(\bn) = \phi(\bR^T \bn)$. There is a close connection between this operator and the Fourier transform and irreducible representations on SE(3), see \ref{ap:Fourierse3}.
\end{remark}






\subsection{Resolvent of the Convection-Diffusion}\label{se:convdiffusionnumerics}
The same idea of substituting directly a series of spherical harmonics into the equation (as done in the previous section for the pure diffusion case) can be used for the resolvent case of the convection-diffusion equation. After the Fourier transform in the spatial coordinates, this equation reads

\begin{equation}\label{eq:augusteqCC}
(\alpha I - \mB^2_\bomega) \hW(\bomega,\bn) = (\alpha I - (D_{44} \Delta_{S^2} - i (\bomega \cdot \bn)) \hW(\bomega,\bn) = \alpha \hat{U}(\bomega,\bn).
\end{equation}
Again, we express $\hW$ in terms of spherical harmonics as

\begin{equation}
\hW(\bomega, \bn) = \sumlm \hW^{l,m}(\bomega) \ylmom(\bn).
\end{equation}
Substituting this into Eq. (\ref{eq:augusteqCC}), we obtain:

\begin{equation}\label{eq:augusteqCC2}
\begin{aligned}
\alpha \sumlm \hW^{l,m}(\bomega)\ylmom(\bn) \; &+  \\+  \sumlm & D_{44} l(l+1) + i r \cos \beta \hW^{l,m}(\bomega) \ylmom(\bn) &= \alpha \sumlm \hU^{l,m}(\bomega) \ylmom(\bn).
\end{aligned}
\end{equation}
We use the relation in Eq. (\ref{eq:relationSH}) once, and we get

\begin{equation}
\cos \beta \; Y^{l,m}(\beta,\gamma) = \sum_{l' = |m|}^\infty ((\bM^m_2)^T)_{l l'} Y^{l',m}(\beta,\gamma),
\end{equation}
where matrix $\bM_2^m$ only has non-zero elements on the upper and lower diagonal:

\begin{equation}\label{eq:matrixM2}
(\bM_2^m)^T := \bN^m \begin{pmatrix}
0 & \xi^{|m|,|m|} &  & \text{\textbf{\large{O}}} &  \\
\nu^{|m|+1,|m|} & 0 & \xi^{|m|+1,|m|} &  &  \\
 & \nu^{|m|+2,|m|} & 0 & \xi^{|m|+2,|m|} & \\
 \text{\textbf{\large{O}}} & & \ddots &  & \ddots 
\end{pmatrix} (\bN^m)^{-1},
\end{equation}
with $\xi^{l,m}$ and $\nu^{l,m}$ as in (\ref{eq:relationSH}) and $\bN^m$ as in (\ref{eq:normalizationMatrix}). Using this matrix and proceeding analogously to the diffusion case, the solution to Eq. (\ref{eq:augusteqCC2}) is found by solving:


\begin{equation}\label{eq:matrixeqWmCC}
(\alpha I + D_{44} \mathbf{\Lambda}^m  + i r \bM_2^m) \bhw^m(\bomega) = \alpha \mathbf{\hat{u}}^m(\bomega), \qquad \bhw^m(\bomega) = (\hW^{l,m}(\bomega))_{l=|m|}^{\infty}.
\end{equation}

\subsection{Numerical Implementation of the Discrete Spherical Transform}\label{se:implementation}
When using the above procedure to compute the kernel, there are two places where the numerics differ from the exact solution: the series of spherical harmonics is truncated and the Fourier transform is carried out discretely. We introduce a parameter $l_{max}$ to indicate the maximal order of spherical harmonics that is taken into account. Furthermore, we take discrete values for $\bomega$ on an equidistant cubic grid, say $\bomega_{ijk}$, such that for each $\bomega_{ijk} \in \mathbb{R}^3$ the component $\bhw^{m}$ of the solution requires, for the pure-diffusion case, solving the ODE:



\begin{equation}\label{eq:numericaldifferentialeq}
\left\{ \begin{aligned}
&\partial_t \bhw^m(\bomega_{ijk},t) = -D_{33} r^2 \bM^m_{1,l_{max}} \bhw^m(\bomega_{ijk},t) - D_{44} \mathbf{\Lambda}^m_{l_{max}} \bhw^m(\bomega_{ijk},t), \qquad \bhw^m = (\hW^{l,m})_{l=|m|}^{l_{max}},\\
&\bhw^m(\bomega_{ijk},0) = \hat{\mathbf{u}}^m(\bomega_{ijk}) := \sum_{l=0}^{l_{max}} Y^{l,m}_{\bomega_{ijk}}(\be_z) Y^{l,m}_{\bomega_{ijk}} (\bn)
\end{aligned}\right. ,
\end{equation}
where $\bM^m_{1,l_{max}}, \mathbf{\Lambda}^m_{l_{max}} \in \mathbb{R}^{(l_{max}-|m|+1) \times (l_{max}-|m|+1)}$. In the convection-diffusion, resolvent case, it comes down to solving:

\begin{equation}
(\alpha I + D_{44} \mathbf{\Lambda}_{l_{max}}^m  + i r \bM_{2,l_{max}}^m) \bhw^m(\bomega) = \alpha \mathbf{\hat{u}}^m(\bomega), \qquad \bhw^m(\bomega,t) = (\hW^{l,m}(\bomega,t))_{l=|m|}^{l_{max}},
\end{equation}
for all $\bomega = \bomega_{ijk}$ on an equidistant grid:

\begin{equation}
\bomega_{ijk} = \left(\frac{i \eta \pi}{N}, \frac{j \eta \pi}{N}, \frac{k \eta \pi}{N} \right), \qquad i,j,k \in \{-N, \dots, N \}, \; \eta \in \mathbb{N},
\end{equation}
where $N$ denotes the number of samples. We use a discrete centered inverse Fourier transform to go back to the $\quot$ domain. 

\begin{remark}
In our implementation of the algorithm for Eq. (\ref{eq:numericaldifferentialeq}), we work with the basis $Y_{\mathbf{0}}^{l,m} = Y^{l,m}$. We use Wigner D-matrices to bring the coefficients with respect this basis to coefficients with respect to the rotated spherical harmonics $Y_\bomega^{l,m}$ and vice versa. This is done for all $m$ simultaneously to transform the coefficients of the initial condition. Then Eq. (\ref{eq:whatexp}) is solved for each $m$, and the solution is transformed back, again for all $m$ at once.
\end{remark}

The result for the diffusion kernel $K_t^1(\bomega,\bn)$ with $t = 2$, $D_{33} = 1$, $D_{44} = 0.1$, $\eta = 8$, $N = 65$ and $l_{max} = 12$ (resulting in 169 spherical harmonic coefficients) is shown in Fig. \ref{fig:exactCEkernels}. The convection-diffusion kernel $K_t^2(\bomega, \bn)$ for $k = 1$, $D_{44} = 0.5$, $\eta = 4$, $N = 65$, $l_{max} =12$ is given on the left in Fig. \ref{fig:exactCCkernel}. Numerically integrating these kernels for different $t$, using a $\Gamma$-distribution with $k = 4$ and $\alpha = 0.25$ gives the result as displayed on the right in Fig. \ref{fig:exactCCkernel}.

\begin{figure}[t!]
   \centering
   \includegraphics[width = 0.9\textwidth]{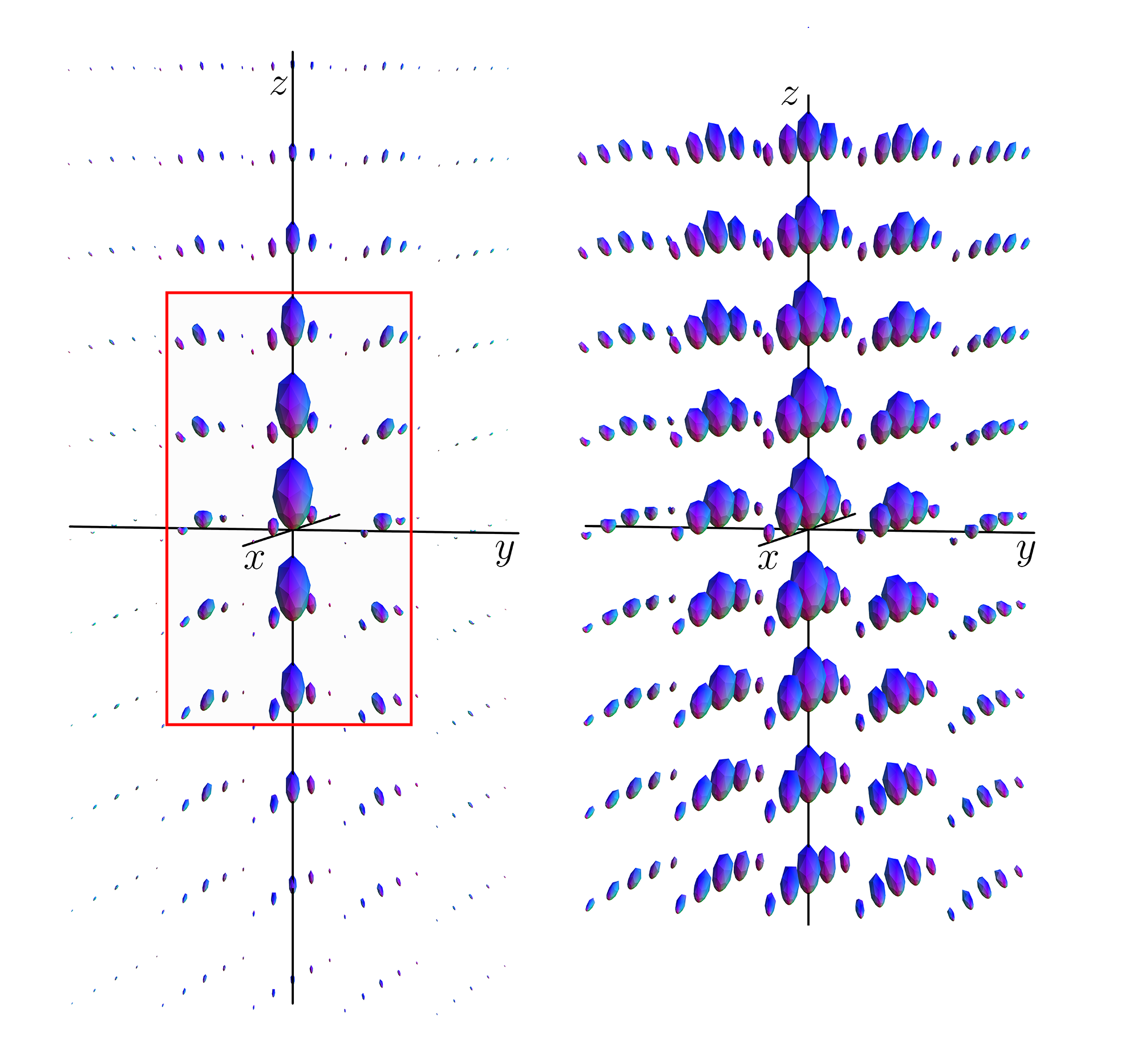}
 \caption{Glyph field visualization (as explained in Section \ref{se:stopro}) of the kernel $K^1_{t=2}(\bomega,\bn)$, with a higher resolution on the right. For this kernel, $D_{33} = 1$, $D_{44} = 0.1$.}\label{fig:exactCEkernels}
 \end{figure}


  \begin{figure}[t!]
   \centering
   \includegraphics[width = 0.9\textwidth]{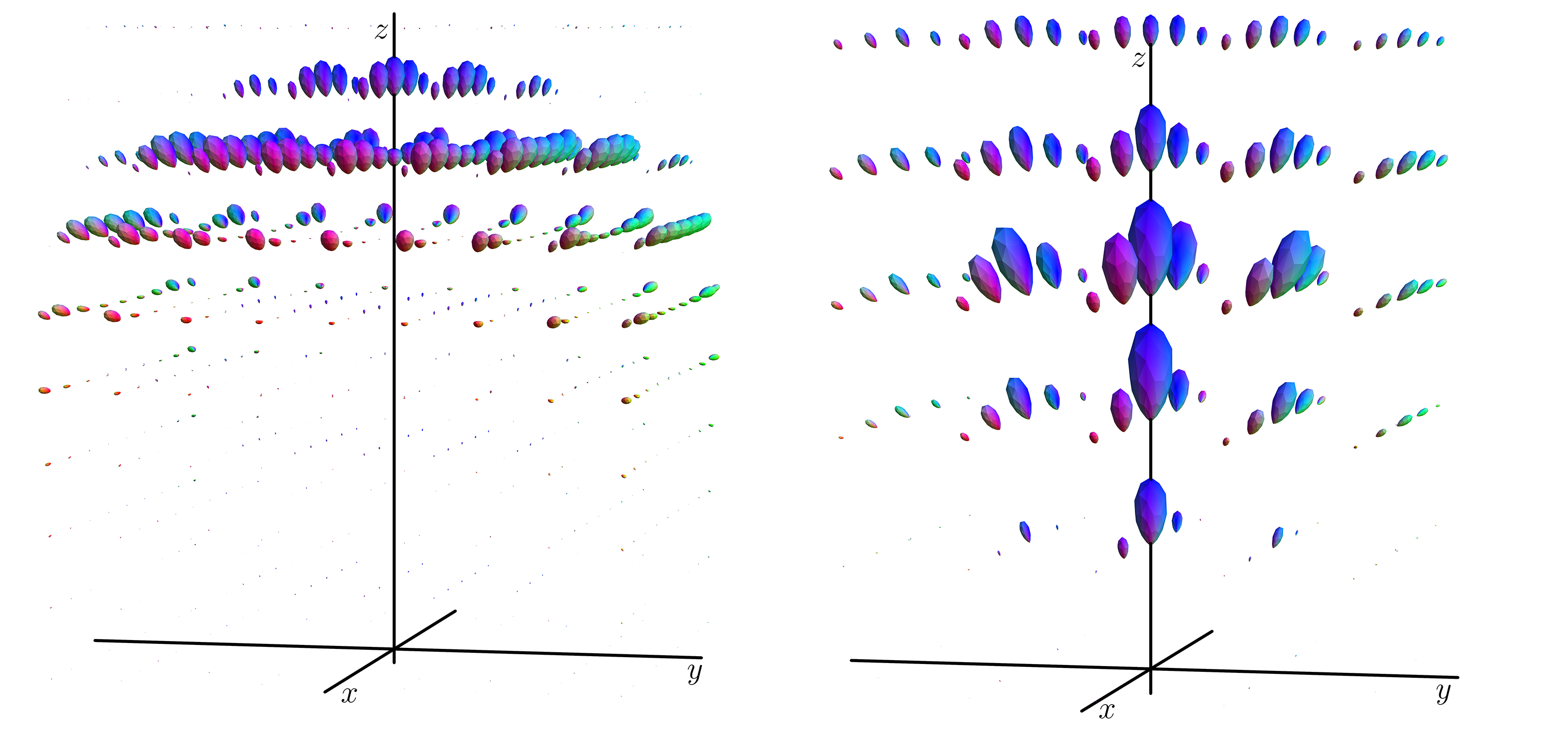}
 \caption{Glyph field visualization of the time dependent kernel $K_{t=1}^2$ for the convection-diffusion case, with $D_{44} = 0.5$ (left) and the time-integrated kernel, where we used a Gamma-distribution with $k = 4$ and $\alpha = .25$. }\label{fig:exactCCkernel}
 \end{figure}

\section{Analytical Approximations Using Logarithm on SE(3)}\label{se:approximationkernel}
For performing the enhancement of an image $U : \mathbb{R}^3 \rtimes S^2$ via the PDE framework as before, it is beneficial to have a good approximation of the convolution kernel. We have derived the exact solutions and related them to Fourier based algorithms, but they are not feasible for fast computation of the enhancement of the image. Numerical approximations of the kernels exist \cite{duits_left-invariant_2010}, for which improvements have been proposed in our conference paper \cite{portegies_new_2015}. A fast implementation for neuroimaging purposes was made by \cite{meesters_fast_2016}. In the following lemma we show three properties of the solution operator and the kernel of the pure diffusion case on the group $SE(3)$, from which we derive later a number of properties for the kernel on the group quotient $\quot$.

\begin{lemma}\label{le:lemma}
Let $\tUps:\mathbb{L}_2^R(SE(3)) \rightarrow \mathbb{L}_2^R(SE(3))$ be a linear and legal kernel operator, and assume it maps $\mathbb{L}_2(SE(3))$ into $\mathbb{L}_{\infty}(SE(3))$. Then we have:
\begin{enumerate}
\item identity: $(\tUps (\tU))(g) = \int \limits_{SE(3)} \tilde{k}(g,q)\,\tU(q)\, {\rm d} q = (\check{\tK} \ast_{SE(3)} \tU)(g),
\textrm{ with } \\ \tilde{k}(g,q) = \tilde{k}(e,(g^{-1}q)) =:\check{\tK}(q^{-1}g)$ and with $\tilde{K}(g):=\tilde{k}(g,e)=\check{\tilde{K}}(g^{-1})$.\mbox{}\vspace{0.2cm}\mbox{}
\item symmetry: $
\tilde{k}(gh, qh') = \tilde{k}(g,q)\textrm{ for all } h, h' \in H$ and all $g \in SE(3)$.
\item preservation of correspondences:
$(U \leftrightarrow \tU) \Rightarrow \Upsilon(U) \leftrightarrow \tUps(\tU)$ with
\begin{equation}\label{int}
\begin{array}{lll}
(\Upsilon (U))(\by,\bn) &= \int \limits_{\mathbb{R}^3 \times S^2} k(\by,\bn,\by',\bn') U(\by',\bn') {\rm d} \by'{\rm d} \sigma(\bn'), \qquad \textrm{ with } \\
k(\by,\bn,\by',\bn') &= \frac{1}{2\pi}\tilde{k}((\by,\bR_{\bn}),(\by',\bR_{\bn'})) \\
 &= \frac{1}{2\pi} \tilde{K} ((\by',\bR_{\bn}')^{-1}(\by,\bR_{\bn})) = K(\bR_{\bn'}^{-1}(\by-\by'),\bR_{\bn'}^{-1}\bn).
\end{array}
\end{equation}
\end{enumerate}
\end{lemma}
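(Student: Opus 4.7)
My plan is to handle the three items in order, each time using one of the three structural hypotheses: boundedness ($\mathbb{L}_2 \to \mathbb{L}_\infty$) for item 1, the two legality conditions for item 2, and the quotient identification \eqref{eq:quotientgroupconstraint} for item 3.

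For item 1, the hypothesis that $\tUps$ maps $\mathbb{L}_2(SE(3))$ into $\mathbb{L}_\infty(SE(3))$ gives, for each $g$, a bounded linear functional $\tU \mapsto (\tUps\tU)(g)$ on $\mathbb{L}_2(SE(3))$, hence by Riesz representation there is a measurable kernel $\tilde{k}(g,\cdot)\in \mathbb{L}_2(SE(3))$ with $(\tUps\tU)(g) = \int_{SE(3)} \tilde k(g,q)\tU(q)\,{\rm d}q$. To convert this into a convolution, I would exploit left-invariance: writing $\tUps \mL_{g'}\tU = \mL_{g'}\tUps\tU$ with both sides expressed as kernel integrals and performing the change of variables $q \mapsto g'q$ (the Haar measure is bi-invariant as $SE(3)$ is unimodular) yields $\tilde k(g,q) = \tilde k(g'^{-1}g,g'^{-1}q)$ for all $g,g',q$. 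Setting $g' = g$ gives $\tilde k(g,q) = \tilde k(e,g^{-1}q)$, so defining $\check{\tK}(h) := \tilde k(e,h)$ and $\tK(g) := \check{\tK}(g^{-1})$ finishes this step.

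For item 2, the first identity $\tilde k(gh,q) = \tilde k(g,q)$ for $h \in H$ follows immediately from the legality condition $\tUps = \mathcal R_h \tUps$: evaluating at any $\tU$ gives $(\tUps\tU)(gh) = (\tUps\tU)(g)$, and since this holds for all test functions $\tU$ the kernels must agree (for a.e.\ $q$, which I promote to everywhere by the left-invariant representative $\check{\tK}(g^{-1}q)$ obtained above). For the right variable, note that every $\tU \in \mathbb{L}_2^R(SE(3))$ satisfies $\tU(qh') = \tU(q)$ for all $h' \in H$; substituting $q\mapsto qh'$ in the integral and using bi-invariance of the Haar measure, together with the fact that $\tUps$ acts identically on the H-averaged function, forces $\tilde k(g,qh')=\tilde k(g,q)$ after replacing $\tilde k$ by its canonical H-right-average (which leaves the operator unchanged on $\mathbb{L}_2^R(SE(3))$).

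For item 3, the key step is to reduce the Haar integral over $SE(3)$ to an integral over the quotient $\mathbb{R}^3\rtimes S^2$ using the slicing ${\rm d}\bR' = \frac{1}{2\pi}{\rm d}\sigma(\bn')\,{\rm d}\alpha'$ compatible with the decomposition $\bR' = \bR_{\bn'}\bR_{\be_z,\alpha'}$ (this normalization is the source of the $\tfrac{1}{2\pi}$ factor). For $\tU$ arising from $U$ via \eqref{eq:quotientgroupconstraint}, both $\tU$ and (by item 2) $\tilde k(g,\cdot)$ are constant along right $H$-cosets, so the $\alpha'$-integration produces a factor $2\pi$ and collapses into an integral over $(\by',\bn')\in \mathbb{R}^3\times S^2$. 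Evaluating at $g=(\by,\bR_\bn)$ then gives
\[
(\tUps\tU)(\by,\bR_\bn) = \int_{\mathbb{R}^3\times S^2}\tfrac{1}{2\pi}\tilde k\bigl((\by,\bR_\bn),(\by',\bR_{\bn'})\bigr)U(\by',\bn')\,{\rm d}\by'\,{\rm d}\sigma(\bn'),
\]
and by item 1 the kernel equals $\tfrac{1}{2\pi}\tK((\by',\bR_{\bn'})^{-1}(\by,\bR_\bn))$; writing out the SE(3) product gives $K(\bR_{\bn'}^{-1}(\by-\by'),\bR_{\bn'}^{-1}\bn)$, where $K(\by,\bn):=\tK((\by,\bR_\bn))$ is well-defined by item 2.

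The main obstacle I anticipate is the well-definedness in item 3: the quotient kernel $K(\by,\bn)$ is defined through a choice $\bR_\bn$ of section of $SO(3)\to S^2$, and one must check that the value is independent of this choice. This is precisely where the right-$H$-invariance $\tilde k(gh,qh') = \tilde k(g,q)$ established in item 2 is crucial — replacing $\bR_\bn$ by $\bR_\bn \bR_{\be_z,\alpha}$ (and similarly $\bR_{\bn'}$) shifts $g$ and $q$ by elements of $H$ and leaves $\tilde k$, and hence the entire formula, invariant.
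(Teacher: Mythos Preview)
The paper itself does not prove this lemma; it simply states ``The proof of this lemma was given in \cite{portegies_new_2015}.'' So there is no in-paper argument to compare against.

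Your proposal is sound and follows the natural route one would expect in the cited reference. Item~1 is the standard Dunford--Pettis/Riesz argument combined with left-invariance and unimodularity. Item~3 is exactly the quotient-integration argument the factor $\tfrac{1}{2\pi}$ is designed for, and your check that the section choice $\bR_{\bn}$ is immaterial via item~2 is the right closing step.

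One point in item~2 could be tightened. The legality hypothesis $\tUps = \mathcal{R}_h \circ \tUps$ directly yields invariance of $\tilde{k}$ in the \emph{first} slot, $\tilde{k}(gh,q)=\tilde{k}(g,q)$. For the \emph{second} slot you correctly observe that nothing in the hypotheses forces $\tilde{k}(g,qh')=\tilde{k}(g,q)$ pointwise; rather, since the operator is only specified on $\mathbb{L}_2^R(SE(3))$, the kernel is determined only up to its right-$H$-average in the second variable. Your fix (replace $\tilde{k}$ by its canonical average over $H$ in the second slot) is the standard one and preserves the convolution form from item~1 because $H$ is compact and the Haar measure is bi-invariant; you might state explicitly that this averaged representative is the one used in item~3, so that the well-definedness of $K(\by,\bn)$ is genuinely secured.
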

The proof of this lemma was given in \cite{portegies_new_2015}. Now we consider the special case where $K$ represents the solution kernel $K_t^1$ of the diffusion process with generator $Q_1$.

\begin{corollary}\label{corr:1}
The exact kernels $\tilde{K}^1_{t}:SE(3) \to \mathbb{R}^{+}$ and $K^1_{t}:\mathbb{R}^{3}\rtimes S^{2} \to \mathbb{R}^+$ satisfy the following symmetries
\begin{equation}\label{eq:symm1}
K^1_{t}(\ul{y},\ul{n})= K^1_{t}(\bR_{\ul{e}_{z},\alpha}\ul{y},\bR_{\ul{e}_{z},\alpha}\ul{n}) \textrm{ and } \tilde{K}^1_{t}(h^{-1}g)=\tilde{K}^1_{t}(g)=\tilde{K}^1_{t}(gh')
\end{equation}
for all $t>0$, $\alpha>0$, $(\ul{y},\ul{n}) \in \mathbb{R}^{3}\rtimes S^{2}$, $g \in SE(3)$, $h,h' \in H$. Moreover,
\begin{equation}\label{eq:symm2}
 K^1_t(\by,\bn) = K^1_t(-\bR_{\bn}^T \by, \bR_{\bn}^T \be_z) \textrm{ and } \tK^1_t(g) = \tK^1_t(g^{-1}).
\end{equation}
\end{corollary}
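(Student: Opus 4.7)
\textbf{Proof plan for Corollary~\ref{corr:1}.} The plan is to derive both symmetries structurally (from left-invariance and self-adjointness of the generator) rather than from the explicit series representation for $K^1_t$, since the structural argument is cleaner and applies uniformly. We invoke Lemma~\ref{le:lemma} to translate symmetries of $\tilde{K}^1_t$ into symmetries of $K^1_t$.

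For (\ref{eq:symm1}), the relation $\tilde{K}^1_t(h^{-1}g)=\tilde{K}^1_t(g)=\tilde{K}^1_t(gh')$ for $h,h'\in H$ follows directly from item~2 of Lemma~\ref{le:lemma} applied to the legal solution operator $\tilde{\Upsilon}_t = e^{t\tilde{Q}_1}$. Indeed, the symmetry $\tilde{k}(gh,qh')=\tilde{k}(g,q)$ combined with the relation $\tilde{K}(g)=\tilde{k}(g,e)$ and the correspondence $\tilde{k}(g,q)=\check{\tilde{K}}(q^{-1}g)$, $\tilde{K}(g)=\check{\tilde{K}}(g^{-1})$ yields the two-sided $H$-invariance after substituting $q=e$ and $q=gh'$ and using that $H$ is closed under inversion. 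For the quotient symmetry $K^1_t(\ul{y},\ul{n}) = K^1_t(\bR_{\ul{e}_z,\alpha}\ul{y},\bR_{\ul{e}_z,\alpha}\ul{n})$, I would use that $Q_1$ commutes with the action $(\ul{y},\ul{n})\mapsto(\bR\ul{y},\bR\ul{n})$ for every $\bR\in SO(3)$ and that the initial condition $\delta_{(\mathbf{0},\ul{e}_z)}$ is invariant under $\bR_{\ul{e}_z,\alpha}$ (which fixes both $\mathbf{0}$ and $\ul{e}_z$). Hence the kernel, being the solution from this source, inherits the $\bR_{\ul{e}_z,\alpha}$-invariance.

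For (\ref{eq:symm2}) the key observation is that $\tilde{Q}_1 = D_{33}\mA_3^2 + D_{44}\mA_4^2 + D_{55}\mA_5^2$ is formally self-adjoint on $\mathbb{L}_2(SE(3))$. This is because $SE(3)$ is unimodular, so the left-invariant vector fields $\mA_i$ are skew-adjoint with respect to the Haar measure, making each $\mA_i^2$ self-adjoint. Consequently $\tilde{\Upsilon}_t = e^{t\tilde{Q}_1}$ is self-adjoint, so its (real) convolution kernel $\check{\tilde{K}}^1_t$ satisfies $\check{\tilde{K}}^1_t(s)=\check{\tilde{K}}^1_t(s^{-1})$ for all $s\in SE(3)$; combining with $\tilde{K}^1_t(g)=\check{\tilde{K}}^1_t(g^{-1})$ from Lemma~\ref{le:lemma}, one obtains $\tilde{K}^1_t(g)=\tilde{K}^1_t(g^{-1})$.

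The quotient symmetry $K^1_t(\ul{y},\ul{n}) = K^1_t(-\bR_{\ul{n}}^T \ul{y}, \bR_{\ul{n}}^T \ul{e}_z)$ then follows from (\ref{int}) by a choice of representative. Writing $K^1_t(\ul{y},\ul{n}) = \tfrac{1}{2\pi}\tilde{K}^1_t((\ul{y},\bR_{\ul{n}}))$, the group inversion symmetry gives $\tilde{K}^1_t((\ul{y},\bR_{\ul{n}})) = \tilde{K}^1_t((-\bR_{\ul{n}}^T\ul{y},\bR_{\ul{n}}^T))$. Since $\bR_{\ul{n}}^T$ maps $\ul{e}_z$ onto $\bR_{\ul{n}}^T\ul{e}_z$, we may legitimately choose $\bR_{\bR_{\ul{n}}^T\ul{e}_z}:=\bR_{\ul{n}}^T$ in (\ref{eq:quotientgroupconstraint}), which immediately identifies this quantity with $K^1_t(-\bR_{\ul{n}}^T\ul{y},\bR_{\ul{n}}^T\ul{e}_z)$. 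The main subtle point is the self-adjointness step and the accompanying conversion between the operator kernel $\tilde{k}$ and the functions $\tilde{K}$, $\check{\tilde{K}}$; everything else is a direct application of Lemma~\ref{le:lemma} and the group action.
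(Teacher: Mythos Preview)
Your argument is correct. For (\ref{eq:symm1}) you proceed exactly as the paper does, invoking Lemma~\ref{le:lemma} for the legal operator $\tilde{\Upsilon}_t=e^{t\tilde{Q}_1}$.

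For (\ref{eq:symm2}) you take a somewhat different route. The paper argues via the reflectional invariance $(\mA_3,\mA_4,\mA_5)\mapsto(-\mA_3,-\mA_4,-\mA_5)$ of the generator $\tilde{Q}_1$, combined with the fact that negation on the Lie algebra corresponds to inversion on the group. You instead use that on the unimodular group $SE(3)$ the $\mA_i$ are skew-adjoint, so $\tilde{Q}_1$ (a sum of squares) is self-adjoint, whence the semigroup kernel satisfies $\check{\tilde{K}}^1_t(s)=\check{\tilde{K}}^1_t(s^{-1})$ and therefore $\tilde{K}^1_t(g)=\tilde{K}^1_t(g^{-1})$. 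The two arguments encode the same structural fact from complementary angles: the paper's reflection argument is geometric (inversion acts by $A\mapsto -A$ at the Lie-algebra level and the pure-square form survives), while your self-adjointness argument is analytic and arguably more robust, since it bypasses the need to track how inversion intertwines left- and right-invariant fields. Your subsequent passage from $\tilde{K}^1_t(g)=\tilde{K}^1_t(g^{-1})$ to the quotient identity $K^1_t(\ul{y},\ul{n})=K^1_t(-\bR_{\ul{n}}^T\ul{y},\bR_{\ul{n}}^T\ul{e}_z)$ via the admissible choice $\bR_{\bR_{\ul{n}}^T\ul{e}_z}:=\bR_{\ul{n}}^T$ is clean and matches the correspondence in Lemma~\ref{le:lemma}.
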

\begin{proof}
The symmetry (\ref{eq:symm1}) is due to Lemma \ref{le:lemma}. The second symmetry (\ref{eq:symm2}) is due to reflectional invariance $(\mA_3,\mA_4,\mA_5) \mapsto (-\mA_3,-\mA_4,-\mA_5)$ in the diffusion (\ref{eq:CEdiff}) and reflection on the Lie algebra corresponds to inversion on the group.
\end{proof}
\begin{remark}\label{remark:symmetry}
Note that (\ref{eq:symm2}) in terms of $k$ would be: $k(\by,\bn,\mathbf{0},\be_z) = k(\mathbf{0},\be_z,\by,\bn)$, by the relation $k(\by,\bn,\by',\bn') = K^1_t(\bR_{\bn'}^{T}(\by-\by'),\bR_{\bn'}^{T}\bn)$.
\end{remark}
Now the Gaussian kernel approximation, based on the logarithm on $SE(3)$ and the theory of coercive weighted operators on Lie groups \cite{ter_elst_weighted_1998}, presented in earlier work \cite[Ch:8,Thm.11]{duits_morphological_2012} does not satisfy this symmetry. Next we will improve it by a new practical analytic approximation which does satisfy the property.

\subsection{Comparison with Earlier Kernel Approximations for the Diffusion Kernel}\label{se:kernelapprox}
We first present two existing approximations for $K^1_t$ and $\tK^1_t$, that do not automatically carry over the properties we have shown in the previous section. A possible approximation kernel for $K^1_t$ is based on a direct product of two $SE(2)$-kernels, see \cite[Eq.(10), Eq.(11)]{rodrigues_accelerated_2010}, to which we refer as $K_t^{1,\text{App}\;1}$, with App being short for approximation. This approximation is easy to use since it is defined in terms of the Euler angles of the corresponding orientations. However, the symmetries described before are not preserved by $K_t^{1,\text{App}\;1}$ and errors tend to be larger when $D_{44}$ and $t$ increase. Therefore we move to an approximation for the $SE(3)$-kernel $\tK^1_t$, for which we show that it can be adapted such that it has the important symmetries. We need the logarithm on $SE(3)$ for this approximation, so first consider an exponential curve in $SE(3)$, given by $\tilde{\gamma}(t)= g_0 \exp \left(t \sum \limits_{i=1}^{6} c^{i}A_i\right)$. The logarithm $\log_{SE(3)}: SE(3) \to T_{e}(SE(3))$ is bijective, and it is given by
\begin{equation}\label{eq:log}
\log_{SE(3)}(g)=\log_{SE(3)}\left(\exp\left(\sum \limits_{i=1}^{6}c^i(g)A_i\right)\right) =\sum \limits_{i=1}^{6}c^i(g)A_i,
\end{equation}
and we can relate to this the vector of coefficients $\bc(g)=(\bc^{(1)}(g),\bc^{(2)}(g))=(c^{1}(g),\cdots, c^{6}(g))^{T}$.
We define matrix $\OmegaBF$ as follows:
\begin{equation}
\OmegaBF := \left(\begin{array}{ccc}
0 & -c^6 & c^5 \\
c^6 & 0 & -c^4 \\
-c^5 & c^4 & 0
\end{array}
\right), \qquad \bR = e^{t \OmegaBF}, \qquad \bR \in SO(3).
\end{equation}
We can write $\bR$ in terms of Euler angles, $\bR = \bR_{\be_z,\gamma} \bR_{\be_y,\beta} \bR_{\be_z,\alpha}$. Let the matrix $\OmegaBF_{\gamma,\beta,\alpha}$ be such that
$\exp(\OmegaBF_{\gamma,\beta,\alpha})=\bR_{\be_z,\gamma} \bR_{\be_y,\beta} \bR_{\be_z,\alpha}$. The spatial coefficients $\bc^{(1)} = \bc^{(1)}_{\bx,\gamma,\beta,\alpha}$ are given by the following equation:
\begin{equation}\label{eq:c1}
\bc^{(1)}  = \left(I - \frac12 \OmegaBF_{\gamma,\beta,\alpha} + q^{-2}_{\gamma,\beta,\alpha} \left(1 - \frac{q_{\gamma,\beta,\alpha}}{2} \cot \left(\frac{q_{\gamma,\beta,\alpha}}{2} \right) \right)(\OmegaBF_{\gamma,\beta,\alpha})^2 \right) \bx,
\end{equation}
where $q_{\gamma,\beta,\alpha}$ is the (Euclidean) norm of $\bc^{(2)}$. Then an approximation for the kernel $\tK^1_t$ is, up to a rescaling of time $t \mapsto c t$ with $c\geq 1$ due to an equivalence of two norms\footnote{The weighted logarithmic modulus and the (sub-Riemannian) distance in $SE(3)$, cf.~\!\cite{duits_sub-riemannian_2013}.} \cite[Prop.6.1]{ter_elst_weighted_1998}, given by \cite{duits_left-invariant_2010}:
\begin{equation}
  \tK^{1,\log}_t(\bc(g)) = \frac{1}{(4\pi t^2 D_{33} D_{44})^2} e^{-\frac{|\log_{SE(3)}(g)|^2_{D_{33},D_{44}}}{4t}},
\end{equation}
  with the smoothed variant of the weighted modulus, \cite[Eq.78,79]{duits_morphological_2012}, given by
\begin{equation}
    |\log_{SE(3)}(\cdot)|_{D_{33},D_{44}} = \sqrt[4]{\frac{1}{\xi}\, \frac{|c^1|^2+|c^2|^2}{D_{33}D_{44}}+\frac{|c^6|^2}{D_{44}}+\left(\frac{(c^3)^2}{D_{33}}+\frac{|c^4|^2 + |c^5|^2}{D_{44}}\right)^2},
\end{equation}
where we included a scaling with $\xi>0$ in the commutator term. In view of computations in \cite[ch.5.2]{duits_left-invariant_2010-2} we set $\xi=16$. First numerical investigations indicate this is needed to get Gaussian kernels close to the exact kernels (compare the result in Figure~\ref{fig:logarithmicapp} to the exact result in Figure~\ref{fig:exactCEkernels}).

Now the difficulty lies in the fact that the function $U$ is defined on the quotient for elements $(\ul{y}, \bR_{\bn})$, where the choice for $\alpha$ in the rotation matrix $\bR_{\bn}$ (mapping $\ul{e}_{z}$ onto $\ul{n} \in S^2$) is not of importance. However, the logarithm is only well-defined on the group $SE(3)$, not on the quotient $\mathbf{R}^{3}\rtimes S^{2}$, and explicitly depends on the choice of $\alpha$. It is therefore not straightforward to use this approximation kernel such that the invariance properties in Corollary~\ref{corr:1} are preserved. In view of Corollary~\ref{corr:1}, we need both left-invariance and right-invariance for $\tilde{K}^1_t(g)$ under the action of the subgroup $H$. As right-invariance is naturally included via $\tilde{K}^1_{t}(\ul{y},\bR)=K^1_{t}(\ul{y},\bR \ul{e}_{z})$, left-invariance is equivalent to inversion invariance. In previous work the choice of $\alpha=0$ is taken, giving rise to the approximation
\begin{equation}
K_t^{1,\mathrm{App}\,2}(\by,\bn(\beta,\gamma)) = \tK_t^{1,\log}(\bc(\by,\bR_{\be_z,\gamma}\bR_{\be_y,\beta})).
\end{equation}
However, this section is not invariant under inversion. In contrast, we propose to take the section $\alpha=-\gamma$, which is invariant under inversion (since
$(\ul{R}_{\ul{e}_{z},\gamma}\ul{R}_{\ul{e}_{y},\beta}\ul{R}_{\ul{e}_{z},-\gamma})^{-1}=
 \ul{R}_{\ul{e}_{z},\gamma}\ul{R}_{\ul{e}_{y},-\beta}\ul{R}_{\ul{e}_{z},-\gamma}$).
Moreover, this choice for estimating the kernels in the group is natural, as it provides the
weakest upper bound kernel since by direct computation one has \mbox{$\alpha=-\gamma \Rightarrow c_{6}=0$}.
Finally, this choice indeed provides us the correct symmetry for the Gaussian approximation of $K^1_t(\ul{y},\ul{n})$ as stated in the following theorem, that is proven in \cite{portegies_new_2015}.


\begin{theorem}
When the approximate kernel $K_t^{1,\mathrm{new}}$ on the quotient is related to the approximate kernel on the group $\tK_t^{1,\log}$ by
\begin{equation}\label{eq:defappkernel}
K_t^{1,\mathrm{new}}(\by,\bn(\beta,\gamma)) := \tK_t^{1,\log}(\bc(\by,\bR_{\be_z,\gamma}\bR_{\be_y,\beta}\bR_{\be_z,-\gamma})),
\end{equation}
i.e. we make the choice $\alpha = -\gamma$, we have the desired $\alpha$-left-invariant property
\begin{equation}
\tK_t^{1,\mathrm{new}}(\by,\bn) = K_t^{1,\mathrm{new}}(\bR_{\be_z,\alpha'}\by, \bR_{\be_z,\alpha'}\bn), \qquad \alpha' \in [0,2\pi].
\end{equation}
and the symmetry property
\begin{equation}\label{symm}
K_t^{1,\mathrm{new}}(\by,\bn) = K_t^{1,\mathrm{new}}(-\bR^T_{\bn}\by,\bR_{\bn}^T\be_{z})
\end{equation}
\end{theorem}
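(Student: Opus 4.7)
The plan is to reduce both invariance properties of $K_t^{1,\mathrm{new}}$ to properties of the underlying group kernel $\tK_t^{1,\log}$, exploiting the particular algebraic features of the section $\alpha = -\gamma$ chosen in (\ref{eq:defappkernel}). Throughout, let $g := (\by, \bR_{\bn})$ with $\bR_{\bn} := \bR_{\be_z,\gamma}\bR_{\be_y,\beta}\bR_{\be_z,-\gamma}$ denote the representative used to define $K_t^{1,\mathrm{new}}(\by,\bn(\beta,\gamma))$, and write $h_{\alpha'}:=(\ul{0},\bR_{\be_z,\alpha'}) \in H$.

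For the $\alpha$-left-invariance, I would first observe that applying the joint rotation $\bR_{\be_z,\alpha'}$ to $(\by,\bn)$ shifts the azimuthal angle $\gamma \mapsto \gamma+\alpha'$ while leaving $\beta$ fixed. A direct computation then gives
\begin{equation*}
(\bR_{\be_z,\alpha'}\by,\, \bR_{\be_z,\gamma+\alpha'}\bR_{\be_y,\beta}\bR_{\be_z,-(\gamma+\alpha')}) = h_{\alpha'}\, g\, h_{\alpha'}^{-1},
\end{equation*}
so the action on the quotient lifts precisely to $H$-conjugation on $SE(3)$. Since $\log_{SE(3)}(h g h^{-1}) = \mathrm{Ad}(h)\log_{SE(3)}(g)$, it remains to verify that the weighted modulus $|\cdot|_{D_{33},D_{44}}$ is invariant under $\mathrm{Ad}(h_{\alpha'})$. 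This reduces to a short Lie-algebra calculation: $\mathrm{Ad}(h_{\alpha'})$ rotates the pairs $(A_1,A_2)$ and $(A_4,A_5)$ by $\bR_{\be_z,\alpha'}$ and fixes $A_3,A_6$, so the coefficients transform as $(c^1,c^2)\mapsto \bR_{\be_z,\alpha'}(c^1,c^2)^T$, $(c^4,c^5)\mapsto \bR_{\be_z,\alpha'}(c^4,c^5)^T$, while $c^3,c^6$ are unchanged. Because the weighted modulus depends on the former pairs only through $|c^1|^2+|c^2|^2$ and $|c^4|^2+|c^5|^2$, it is manifestly invariant, and the $\alpha$-left-invariance follows.

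For the symmetry (\ref{symm}), the key algebraic identity I would establish is that with the section $\alpha=-\gamma$ the representative of $\bR_{\bn}^T\be_z$ coincides with $\bR_{\bn}^{-1}$. Using the relation $\bR_{\be_y,-\beta} = \bR_{\be_z,\pi}\bR_{\be_y,\beta}\bR_{\be_z,-\pi}$, a short computation gives $\bR_{\bn}^T = \bR_{\be_z,\gamma+\pi}\bR_{\be_y,\beta}\bR_{\be_z,-(\gamma+\pi)}$, whence $\bR_{\bn}^T\be_z = \bn(\beta,\gamma+\pi)$ and the section-prescribed representative of this new direction is exactly $\bR_{\bn}^T$. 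Consequently $(-\bR_{\bn}^T\by,\, \bR_{\bn(\beta,\gamma+\pi)}) = g^{-1}$ in $SE(3)$.

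The proof is then concluded by the inversion invariance of the approximate group kernel: since $\log_{SE(3)}(g^{-1}) = -\log_{SE(3)}(g)$, all coefficients $c^i$ change sign, and the weighted modulus (built only from even functions of the $c^i$) is unaffected, so $\tK_t^{1,\log}(g^{-1})=\tK_t^{1,\log}(g)$. Combining with the identification above yields $K_t^{1,\mathrm{new}}(-\bR_{\bn}^T\by,\bR_{\bn}^T\be_z) = \tK_t^{1,\log}(g^{-1}) = \tK_t^{1,\log}(g) = K_t^{1,\mathrm{new}}(\by,\bn)$. The main technical obstacle, in my view, is the bookkeeping in the second step: one must carefully track that the section $\alpha=-\gamma$ applied to the new direction $\bn(\beta,\gamma+\pi)$ produces exactly $\bR_{\bn}^{-1}$ and not merely a $H$-translate thereof, as it is this coincidence (unlike the naive section $\alpha=0$) that is responsible for the inversion symmetry being transferred to the quotient approximation.
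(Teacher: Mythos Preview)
Your argument is correct and matches the line of reasoning the paper itself sketches just before the theorem (the actual proof is deferred to \cite{portegies_new_2015}). In particular, the paper records exactly the algebraic fact underlying your symmetry argument, namely that the section $\alpha=-\gamma$ is inversion-stable via $(\bR_{\be_z,\gamma}\bR_{\be_y,\beta}\bR_{\be_z,-\gamma})^{-1}=\bR_{\be_z,\gamma}\bR_{\be_y,-\beta}\bR_{\be_z,-\gamma}$; your reparametrization $\bR_{\be_y,-\beta}=\bR_{\be_z,\pi}\bR_{\be_y,\beta}\bR_{\be_z,-\pi}$ is simply the other way to read this, identifying $\bR_{\bn}^{-1}$ with the section-prescribed representative at $\bn(\beta,\gamma+\pi)$. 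Your treatment of the $\alpha$-left-invariance via $\mathrm{Ad}(h_{\alpha'})$-invariance of the weighted modulus, and of the symmetry via $\log_{SE(3)}(g^{-1})=-\log_{SE(3)}(g)$ together with the evenness of $|\cdot|_{D_{33},D_{44}}$ in the $c^i$, fills in precisely the details the paper leaves to the cited reference.
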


\subsection{Comparison Logarithmic Approximation to Exact Solutions}
Qualitatively, the approximation kernel can be used effectively in applications. Here $D_{44} > 0$ tunes the angular diffusion, $t>0$ regulates the size and $D_{33}$ tunes the spatial diffusion. However, from the general theory \cite{ter_elst_weighted_1998} we can only expect locally good approximations. In comparison to the exact kernels, these approximation kernels do not accurately solve the PDEs for these parameters. This is mainly due to the fact that the non-commutative structure of $SE(3)$ is only encoded in the logarithm (\ref{eq:log}), which is not sufficient. On top of this, we recall the required rescaling of time, which requires a smaller time in the Gaussian approximations than in the exact solutions. A visualization of the approximation kernel can be found in
Fig.~\!\ref{fig:logarithmicapp} which is qualitatively comparable to the exact kernel depicted in  Fig.~\!\ref{fig:exactCEkernels}.

  \begin{figure}[t!]
   \centering
   \includegraphics[width = 0.4\textwidth]{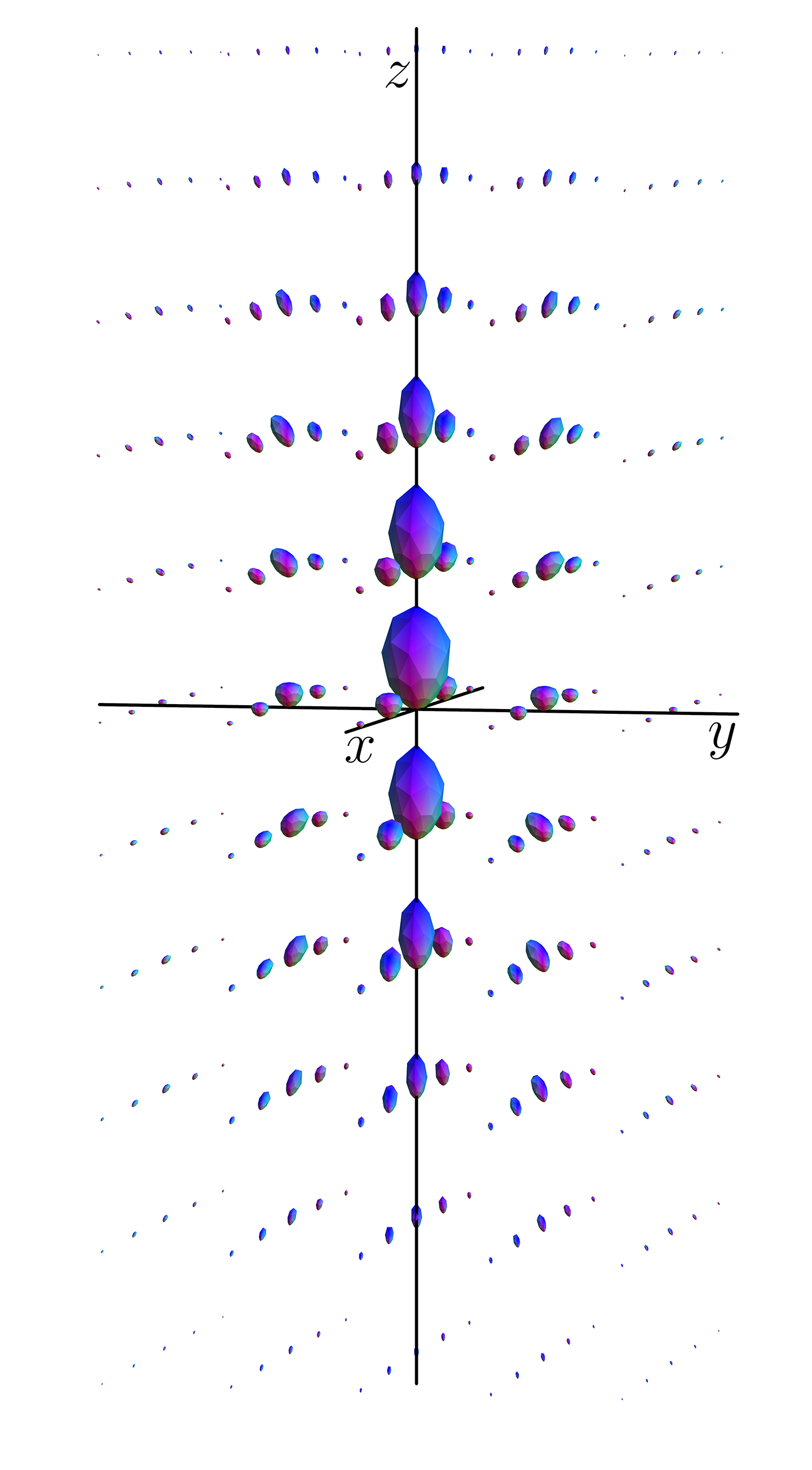}
 \caption{Glyph field visualization of the logarithmic approximation kernel $K_t^{1,\mathrm{new}}$, with $D_{33} = 1.$, $D_{44} = 0.24$, $t = 0.7$, $\xi = 16$. Although the parameters have to be chosen differently, the result is qualitatively comparable to the kernel in Fig. \ref{fig:exactCEkernels}.}\label{fig:logarithmicapp}
 \end{figure}


\section{Conclusion}\label{se:conclusion}
We have provided the explicit solutions for the hypo-elliptic diffusion process for (restricted) Brownian motion on SE(3) and for the convection-diffusion process on SE(3), that is an extension of Mumford's 3D-direction process \cite{mumford_elastica_1994}. The solutions were derived by applying a Fourier transform in the spatial variables and a particular (frequency dependent) choice of coordinates, yielding for both processes a separable second order differential operator. Using the spectral decomposition of this operator, we have obtained a series expression for the solution kernel of the evolution equation and the kernel for the time-integrated process, related to the resolvent of this operator.

In the diffusion case, the eigenfunctions encountered in the spectral decomposition are similar to spherical harmonics, but require spheroidal wave functions instead of Legendre functions. Convergence of the series expressions in terms of the eigenfunctions was shown using Sturm-Liouville theory. The final expression for the exact solution is given in Theorem \ref{th:maintheoremCE}.

For the convection-diffusion case, generalized spheroidal wave functions are needed, that we derive applying a non-standard expansion in Legendre functions. In this case, in contrast to the diffusion case, the considered operator is no longer self-adjoint and as a result, standard Sturm-Liouville cannot be used for proving convergence of the series. Instead we prove completeness of the eigenfunctions and specific properties of the spectrum using perturbation theory of linear (self-adjoint) operators. The exact solution for the resolvent case is given in Theorem \ref{th:maintheoremCC}. 

We have also established a numerical algorithm on SE(3) that generalizes the algorithm in \cite{august_curve_2001,august_sketches_2003} for Mumford's 2D direction process to the 3D case. Using this method, approximate solutions can be found by rewriting the equation into an ODE in terms of expansion coefficients w.r.t. a rotated spherical harmonic basis. We connect this numerical algorithm to the exact solutions, showing that it in essence just diagonalizes (the matrix of) the resolvent operator. Truncation of the exact series representation of the kernels yield the same result. Results of the truncated kernels are shown in Figs. \ref{fig:exactCEkernels} and \ref{fig:exactCCkernel}.

Building on previous work on approximations of the kernels, we have included Gaussian estimates for the hypo-elliptic diffusion kernel and we have compared them qualitatively to the exact solution. The new approximation has the same symmetries as the exact kernel and by making use of a scaling factor, the approximation is good for reasonable parameter settings in practice.

Finally, we have algebraically derived the same exact solutions via harmonic analysis, using the Fourier transform on SE(3), see Theorem \ref{th:Fourierse3}.

Future work will include extensive analysis of the Monte-Carlo simulations, briefly described in \ref{app:stochastics}. Furthermore, we will put connections of hypo-elliptic diffusion (and Brownian bridges) on SE(3) and recently derived sub-Riemannian geodesics in SE(3) \cite{duits_sub-riemannian_2013}.

\section{Acknowledgements}

We gratefully acknowledge A.J.E.M. Janssen for all his useful comments on preliminary versions of this manuscript. Furthermore, we thank J.W. Portegies for his helpful input regarding the completeness of the (generalized) eigenfunctions of the generator of the convection-diffusion case. The research leading to the results of this paper has received funding from the European Research Council under the European Community's Seventh Framework Programme (FP7/2007-2013) / ERC grant \emph{Lie Analysis}, agr.~nr.~335555. \\
\includegraphics[width=0.2\hsize]{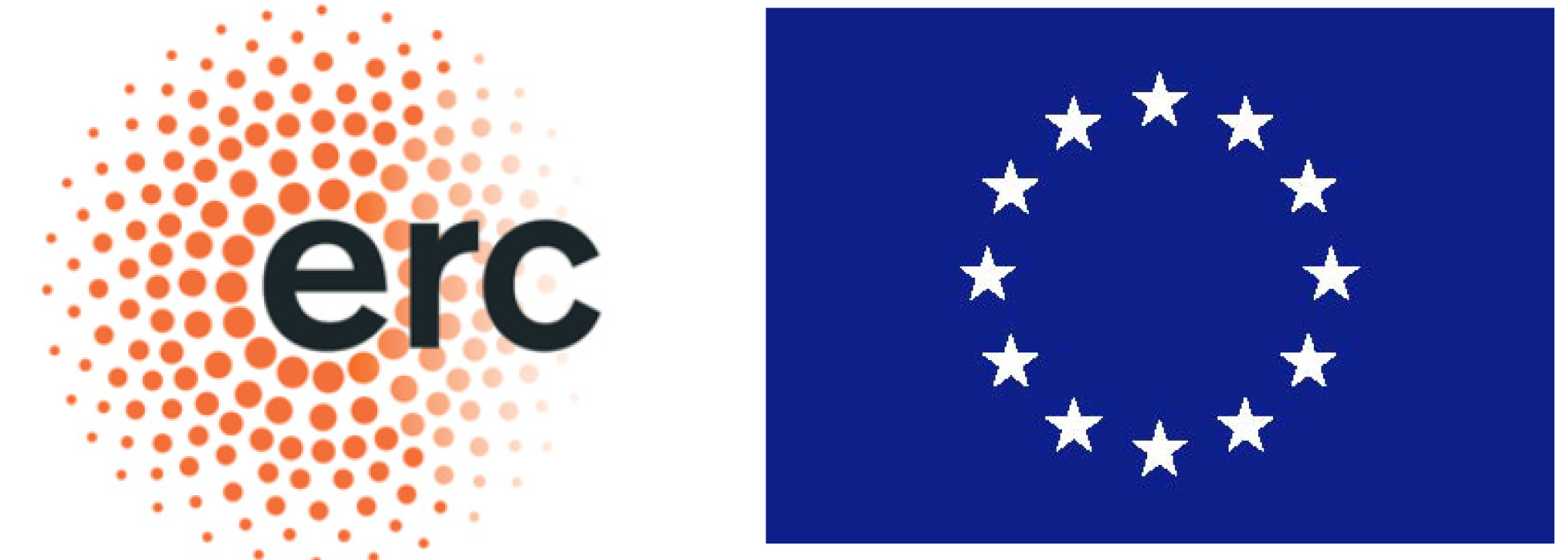}

\bibliographystyle{elsarticle-num}
\bibliography{exactsolutions}

\appendix

\section{Expansion of Spheroidal Wave Functions}\label{ap:SWF}
In this section we show how to obtain the eigenfunctions of the spheroidal wave equation \eqref{eq:spheroidalwave}. First note that when $\rho = r = 0$, corresponding to $\bomega = \mathbf{0}$, the spheroidal wave equation reduces to the Legendre differential equation, that has eigenvalues $-l(l+1)$ and eigenfunctions $P_l^m$, associated Legendre functions with $l \in \mathbb{N}_0$, $m \in \mathbb{Z}$, $|m| \leq l$. We immediately include a normalization factor in the definition of these functions:

\begin{equation}\label{def:plm}
P_l^{m}(x) = N^{l,m} (1-x^2)^{|m|/2} \frac{d^{|m|} P_l(x)}{dx^{|m|}}, \quad P_l(x) = \frac{1}{2^l l!} \frac{d^l}{dx^l}\left[(x^2 -1)^l \right], \quad -1 \leq x \leq 1.
\end{equation}
The normalization factor $N^{l,m}$ is given by

\begin{equation}
N^{l,m} = \sqrt{\frac{(2l+1)(l-|m|)!}{2(l+|m|)!}}.
\end{equation}
When $\rho > 0$ in Eq. (\ref{eq:spheroidalwave}), several series solutions are possible, but it is customary to use a series of associated Legendre polynomials \cite{flammer_spheroidal_1957,olver_nist_2010}:

 \begin{equation}\label{eq:expansionSlm}
 \Slmrho(x) = \sum_{j=0}^{\infty} d_{j}^{l,m} P_{m+j}^m(x), \qquad m \geq 0,
 \end{equation}
 where $d_j^{l,m} = d_j^{l,m}(\rho)$. For the case $m < 0$ it follows immediately from the property that $P_l^m (x)= P_l^{-m}(x)$ that:

 \begin{equation}
 d_j^{l,m} = d_j^{l,-m}, \qquad S^{l,m}_\rho(x) = S^{l,-m}_\rho(x) = \sum_{j=0}^{\infty} d_j^{l,|m|} P_{|m|+j}^{m}(x), \qquad m \in \mathbb{Z}.
 \end{equation}
 We use the following identity to normalize the $\Slm_\rho$:

 \begin{equation}\label{eq:normSrholm}
 \int_{-1}^1 \Slmrho(x) S^{l',m}_\rho(x) \; \rmd x = \delta_{l l'} \; \sum_{j=0}^{\infty} |d_j^{l,m}|^2 =: \delta_{l l'} \; ||\bd^{l,m}||^2, \qquad \bd^{l,m} := (d_j^{l,m})_{j=0}^{\infty}.
 \end{equation}

Our solutions are of the form $S_\rho^m(x) = \sum_{j=0}^\infty d_j^m(\rho) P_{m+j}^m(x)$, where in this appendix we only consider $m \geq 0$. For shortness, we will omit the dependence on $\rho$ of the coefficients. It will follow later that for every $m$ a countable number of solutions exist for $S_\rho^m$. For $m$ fixed, substitution of the series in the differential equation (\ref{eq:spheroidalwave}) gives the following identity:

\begin{equation}\label{eq:derivrec1}
 \sum_{j=0}^{\infty} (-\rho^2 x^2 + \tlambda_\rho - (m+j)(m+j+1)) \; d_j^m P_{m+j}^m (x) = 0, \textrm{ i.e.,}
\end{equation}

\begin{equation}\label{eq:derivrec2}
 -\rho^2 \sum_{j = 0}^{\infty} d_j^m x^2 P_{m+j}^m(x) + \sum_{j= 0}^{\infty} (\tlambda_\rho - (m+j)(m+j+1)) d_j^m P_{m+j}^m(x) = 0.
\end{equation}
By applying the identity given in (\ref{eq:relationPlm}), we can expand the $x^2 P_{m+j}^m$ term in Legendre polynomials $P_{m+j-2}^m$, $P^m_{m+j}$ and $P^m_{m+j+2}$. By substituting Eq. (\ref{eq:relationPlm}) in Eq. (\ref{eq:derivrec2}) and by equating coefficients of $P_{m+j}^m(x)$, the following relation for the $d$'s can be found:



\begin{equation}
\rho^2 \frac{N^{m+j+2,m}}{N^{m+j,m}} \alpha^{m+j+2,m} d_{j+2}^m + (\rho^2 \eta^{m+j,m} + (m+j)(m+r+1)) d_j^m + \rho^2 \frac{N^{m+j-2,m}}{N^{m+j,m}} \zeta^{m+j-2,m}d_{j-2}^m = \tlambda_\rho d_{j}^m.
\end{equation}
In matrix form, this equation can be written as

\begin{equation}\label{eq:matrixeqCE}
(\rho^2 \bM_1^m + \mathbf{\Lambda}^m) \bd^m = \tlambda_\rho \bd^m, \qquad \bd^m = (d^m_0, d^m_{1}, \dots)^T,
\end{equation}
with $\bM_1^m$ as in (\ref{eq:definitionM1}) and $\mathbf{\Lambda}^m$ as in (\ref{eq:matrixeqWm}).


Then the eigenvalues of the matrix on the left are spheroidal eigenvalues, that we denote with $\tlambda_\rho^{l,m}$, $l \geq |m|$, such that $\tlambda_\rho^{m,m} < \tlambda_\rho^{m+1,m} < \tlambda_\rho^{m+2,m} < \dots$. The corresponding eigenvectors form the constant vectors $\bd^{l,m}$ and thereby the functions $\Slmrho(x)$ are determined up to normalization. It follows from the form of the matrix in (\ref{eq:matrixeqCE}) that either the even or the odd coefficients of $\bd^{l,m}$ are 0, resulting in only even and odd functions $\Slmrho(x)$. Recall that in our case $\rho = \sqrt{\frac{D_{33}}{D_{44}}} r >0$ and the eigenvalues corresponding to the eigenfunctions $\Philmom$ are $\lambdalmr = - D_{44} \tlambda_\rho^{l,m}$, and thereby

\begin{equation}\label{eq:finaleigvproblem}
(r^2 D_{33} \bM^m_1 + D_{44}\mathbf{\Lambda}^m)\bd^{l,m} = -\lambda_r^{l,m}\bd^{l,m}.
\end{equation}

We conclude that the spheroidal wave functions are given by

\begin{equation}\label{def:spheroidalwavefunctions}
S_\rho^{l,m}(x) = \sum_{j=0}^{\infty} d_j^{l,|m|}(\rho) \;P_{|m|+j}^m(x), \qquad l \in \mathbb{N}_0, \; m \in \mathbb{Z}, \; |m| \leq l,
\end{equation}
where the vectors $\bd^{l,m}(\rho)$ are the solutions to Eq. (\ref{eq:finaleigvproblem}).

\section{Generalized Spheroidal Wave Functions}\label{ap:GSWF}
Similar to the case of the spheroidal wave functions, we can derive eigenfunctions of the generalized spheroidal wave equation by substituting $GS_\rho^m(x) = \sum_{j=0}^{\infty} c_j^{m}(\rho) P_{m+j}^m(x)$ in Eq. (\ref{eq:maineqCC}). For shortness, we omit the dependence on $\rho$ of the coefficients and we only consider the case $m \geq 0$. This yields

 \begin{equation}\label{eq:derivrec3}
 -i \rho \sum_{j = 0}^{\infty} c_j^m x P_{m+j}^m(x) + \sum_{j= 0}^{\infty} (\tlambda_\rho - (m+j)(m+j+1)) c_j^m P_{m+j}^m(x) = 0.
 \end{equation}
In our case $\rho = \frac{r}{D_{44}} > 0$ and the eigenvalues $\lambda_r^{l,m}$ of the eigenfunctions $\Psilmom$ are given by $\lambdalmr = -D_{44} \tilde{\lambda}_\rho^{l,m}$. Now applying (\ref{eq:relationSH}) once to rewrite the term $x P_{m+j}^m(x)$, we get

\begin{multline}
-i \rho \sum_{j=0}^\infty c_j^m \frac{N^{m+j}}{N^{m+j-1}} \nu^{m+j,m} P_{m+j-1}^m(x) + \frac{N^{m+j}}{N^{m+j+1}} c_j^m \xi^{m+j,m} P_{m+j+1}^m(x) \\ + \sum_{j=0}^\infty (\tilde{\lambda}_\rho - (m+j)(m+j+1))\; c_j^m P_{m+j}^m(x) = 0.
\end{multline}
Again, equating coefficients of $P_{m+j}^m$, we get

\begin{equation}
i \rho \nu^{m+j+1,m} c_{j+1}^m + i \rho \xi^{m+j-1,m} c_{j-1}^m + (m+j)(m+j+1)c_j^m = \tilde{\lambda}_\rho c_j^m, \qquad j \in \mathbb{N}_0.
\end{equation}
In matrix form:

\begin{equation}\label{eq:finaleigvproblemc}
\begin{aligned}
(i \rho \bM_2^m + \mathbf{\Lambda}^m) \bc^m &= \tilde{\lambda}_\rho \bc^m, \qquad \bc^m = (c_0^m, c_1^m \dots)^T,
\end{aligned}
\end{equation}
with $\bM^m_2$ as in (\ref{eq:matrixM2}) and $\mathbf{\Lambda}^m$ as in (\ref{eq:matrixeqWm}).

For each fixed $m$, eigenvalues $\tilde{\lambda}_\rho$ can again be numbered as $\tilde{\lambda}_\rho^{l,m}$, $l \geq |m|$, with eigenvectors $\bc^m$. This determines the eigenfunctions $GS_\rho^{l,m}$ up to a normalization constant:

\begin{equation}\label{eq:GSlmrho}
GS_\rho^{l,m}(x) = \sum_{j=0}^{\infty} c_j^{l,|m|}(\rho) \;P_{|m|+j}^m(x), \qquad l \in \mathbb{N}_0, \; m \in \mathbb{Z}, \; |m| \leq l,
\end{equation}
where the vectors $\bc^{l,m}(\rho)$ are the solutions to Eq. (\ref{eq:finaleigvproblemc}).

\section{Proof of Theorem \ref{thm:gammadist}}\label{app:proof}
We now give a proof for Theorem \ref{thm:gammadist}, that stated the relation between time-integration of the diffusion and convection-diffusion kernels and the repeated convolution of resolvent kernels.
\begin{proof}
When $T \sim \Gamma(k,\alpha)$, we can also write $T = \sum_{j=1}^k T_k$ with i.i.d. $T_k \sim Exp(\alpha)$ and corresponding pdf $P_\alpha(t) := \alpha e^{-\alpha t}$. When $k = 1$, we have seen that $R_\alpha^i = - \alpha (Q_i - \alpha I)^{-1} \delta_{(\mathbf{0},\be_z)}$. When $k > 1$, we have 

\begin{equation}
\begin{aligned}
\int_0^\infty K_t^i \; \Gamma(t;k,\alpha) \; \rmd t &= \int_0^\infty e^{t Q_i}\;\delta_{(\mathbf{0},\be_z)} \;(P_\alpha \ast_{\mathbb{R}^+}^{(k-1)} P_\alpha) (t) \; \rmd t \\
&= \left[\alpha \left[\mathcal{L} (t \mapsto e^{t Q_i} \delta_{(\mathbf{0},\be_z)})\right](\alpha)\right]^k  \\
&= \alpha^k \left[(\alpha I - Q_i)^{-k} \right]\delta_{(\mathbf{0},\be_z)} = R_\alpha^i \ast_{\quot}^{(k-1)} R_\alpha^i,
\end{aligned}
\end{equation}
with $\mathcal{L}$ the Laplace transform on $\mathbb{R}^+$ and $\ast_{\mathbb{R}^+}$ the convolution on $\mathbb{R}^+$. In the second step we used the property that $\mathcal{L}(f \ast_{\mathbb{R}^+} g) = (\mathcal{L}f) (\mathcal{L}g)$.

Regarding the singularities of these kernels, we recall that

\begin{equation}
\begin{aligned}
\hK_t^1(\bomega,\bn)  &= \sumlm \overline{\Philmom(\be_z)} \Philmom(\bn) \;e^{\lambda_r^{l,m}t}, \qquad \lambda_r^{l,m} = - D_{44} l (l+1) + \mathcal{O}(r^2), \\
\hK_t^2(\bomega,\bn)  &= \sumlm \overline{\Psilmom(\be_z) \Psilmom(\bn)} \;e^{\lambda_r^{l,m}t}, \qquad \lambda_r^{l,m} = - D_{44} l (l+1) + \mathcal{O}(r).
\end{aligned}
\end{equation}
All eigenfunctions $\Philmom$, $\Psilmom$ are bounded on the compact set $S^2$. Consider the case $i = 1$, then \\
 $\mF_{\mathbb{R}^3} \left[ R_\alpha^1 \ast_{\quot}^{(k-1)} R_\alpha^1 \right] (\bomega,\bn)$  is $\mathbb{L}_1$-integrable w.r.t $\bomega$ if

\begin{equation}
\int_0^\infty \left(\frac{\alpha}{l(l+1)D_{44} + \mathcal{O}(r^2)+ \alpha} \right)^k r^2 \rmd r < \infty.
\end{equation}
This implies $k > \frac{3}{2}$. For the case $i = 2$, it follows analogously that $k >3$. The fact that this implies that no singularities occur when these conditions for $k$ are satisfied follows from \cite[Thm. 7.5]{rudin_functional_2006}.

\end{proof}

\section{Equivalent Solutions via the Fourier transform on SE(3)\label{ap:Fourierse3}}
 In this appendix, we show how to obtain the solutions of the diffusion and convection-diffusion equations using the Fourier transform on SE(3), rather than a Fourier transform in only the spatial coordinates. We write $G = SE(3)$ for shortness. Recall from Section \ref{sec:introse3} that we denote the rigid body motions with $g=(\ul{x},\ul{R}) \in SE(3)=\R^{3} \rtimes SO(3)$. Now $G$ is a unimodular Lie group (of type I) with (left- and right-invariant) Haar measure ${\rmd}g = {\rm d}\bx \rmd \mu_{SO(3)}(\bR)$ being the product
  of the Lebesgue measure on $\mathbb{R}^{3}$ and the Haar-Measure $\mu_{SO(3)}$ on $SO(3)$. We denote the unitary dual space of $G$ with $\hG$. Its elements are equivalence classes of unitary, irreducible group representations
  $\sigma: G \rightarrow B(H_{\sigma})$, where $B(H_{\sigma})= \{A: H_\sigma \rightarrow H_\sigma \; | \; A \text{ linear and trace}(A^*A)< \infty \}$ denotes the space of bounded linear trace-class operators on a Hilbert space $H_{\sigma}$ (on which each operator $\sigma_{g}$ acts). In the dual space $\hG$ we identify elements via the following equivalence relation:
 \[
 \sigma_1 \sim \sigma_2 \Leftrightarrow \textrm{ there exists a unitary operator $U$, s.t. } \sigma_1 = U \circ \sigma_2 \circ U^{-1}.
 \]
 Then for all $f \in \mathbb{L}_1(G) \cap \mathbb{L}_2(G)$ the Fourier transform $\mathcal{F}_{G}f$ is given by
 \begin{equation}
\hat{f}(\sigma) = (\mathcal{F}_{G}(f))(\sigma) = \int \limits_G f(g)\; \sigma_{g^{-1}} \rmd g \; \in B(H_\sigma), \textrm{ for all } \sigma \in \hat{G},
 \end{equation}
where  $\mathcal{F}_{G}$ maps $\mathbb{L}_2(G)$ unitarily onto the direct integral space $\int_{\hat{G}}^\oplus B(H_\sigma) \; \rmd \nu(\sigma)$ with $\nu$ the Plancherel measure on $\hat{G}$. For more details see \cite{fuhr_abstract_2005,sugiura_unitary_1990,folland_course_1994}.
One has the inversion formula:
 \begin{equation}
f(g) = (\mF^{-1}_G \mF_G f)(g) = \int_{\hat{G}} \text{trace}\{(\mF_G f)(\sigma)\; \sigma_g \}\; \rmd \nu(\sigma).
 \end{equation}
 In our Lie group case of $SE(3)$ we identify all irreducible representations $\sigma^{p,s}$ having non-zero dual measure with the pair $(p,s) \in \mathbb{R}^{+} \times \mathbb{Z}$. This identification is commonly applied, see e.g.\cite{chirikjian_engineering_2000}. All unitary irreducible representations (UIR's) of $G$, up to equivalence, with non-zero Plancherel measure are given by \cite{mackey_imprimitivity_1949,sugiura_unitary_1990}:

 \begin{equation} \label{inter}
 \begin{array}{l}
\sigma = \sigma^{p,s}: SE(3) \to B(\mathbb{L}_{2}(p \, S^2)), \qquad p > 0, s \in \mathbb{Z}, \textrm{ given by }\\[7pt]
 (\sigma^{p,s}_{(\bx,\bR)} \phi)(\bu) = e^{- i \bu \cdot \bx} \phi(\bR^{-1}\bu) \; \Delta_s \left(\bR^{-1}_{\frac{\bu}{p}} \bR \bR_{\frac{\bR^{-1}\bu}{p}}\right), \ \ \ \bu \in p S^{2}, \phi \in \mathbb{L}_{2}(p S^2),
 \end{array}
 \end{equation}
where $\Delta_s$ is a representation of $SO(2)$ (or rather of the stabilizing subgroup $\text{stab}(\be_z) \subset SO(3)$ isomorphic to $SO(2)$) producing a scalar. In (\ref{inter}) $\bR_{\frac{\bu}{p}}$ denotes a rotation that maps $\ul{e}_{z}$ onto $\frac{\bu}{p}$.
So direct computation
\[
\bR^{-1}_{\frac{\bu}{p}} \bR \bR_{\frac{\bR^{-1}\bu}{p}}\ul{e}_{z}= \bR^{-1}_{\frac{\bu}{p}} \bR \bR^{-1}\left(\frac{\bu}{p}\right)= \ul{e}_{z},
\]
shows us that it is a rotation around the $z$-axis, say about angle $\phi$. This yields character $\Delta_s (\bR^{-1}_{\frac{\bu}{p}} \bR \bR_{\frac{\bR^{-1}\bu}{p}})=e^{- i s \phi}$, for details see \cite[ch.10.6]{chirikjian_engineering_2000}.

Furthermore, the dual measure $\nu$ can be identified with the Lebesgue measure on $p S^{2}$ and we have
\[
d \nu (\sigma^{p,s}) \equiv p^2 \rmd p \textrm{ for all }p>0, s \in \mathbb{Z}.
\]
The matrix elements of $\mF_G f$ w.r.t. an orthonormal basis of modified spherical harmonics $\{Y^{l,m}_s(p^{-1}\cdot)\}$, with $|m|, |s| \leq l$, for $\mathbb{L}_{2}(p \, S^2)$ are given by
\begin{equation} \label{matrixone}
\hat{f}^{p,s}_{l,m,l',m'} := \int \limits_G f(g) \; \;(\, Y^{l,m}_s(p^{-1} \cdot)\, , \,\sigma_{g^{-1}}^{p,s} Y^{l',m'}_s(p^{-1}\cdot)\, )_{\mathbb{L}_{2}(p \, S^2)}\; \, \rmd g.
\end{equation}
For an explicit formula for the modified spherical harmonics, see \cite{chirikjian_engineering_2000}, where they are denoted with $h_{m,s}^{l}$. Moreover, we have inversion formula (\cite[Eq.10.46]{chirikjian_engineering_2000}):
\begin{equation} \label{C5}
f(g) = \frac{1}{2 \pi^2} \sum_{s \in \mathbb{Z}} \sum_{l' = |s|}^{\infty} \sum_{l = |s|}^\infty \sum_{m'=-l}^l \sum_{m=-l}^l \int \limits_0^\infty \hat{f}^{p,s}_{l,m,l',m'}\, (\sigma^{p,s}_g)_{l',m',l,m} \; p^2 \rmd p,
\end{equation}
with matrix coefficients (independent of $f$) given by
\begin{equation} \label{matrix}
(\sigma^{p,s}_g)_{l',m',l,m}= (\, \sigma^{p,s}_g Y^{l,m}_s(p^{-1} \cdot), Y^{l',m'}_{s}(p^{-1} \cdot) \,)_{\mathbb{L}_{2}(p \, S^2)}.
\end{equation}
Note that $\sigma^{p,s}$ is a UIR so we have
\begin{equation}\label{property}
(\sigma^{p,s}_{g^{-1}})_{l',m',l,m}= \overline{(\sigma^{p,s}_g)_{l,m,l',m'}}\,.
\end{equation}

Now in our position and orientation analysis we must constrain ourselves to a special type of functions $f=\tU \in \mathbb{L}_{2}(G)$, namely the ones that have the property that $f(\bx,\bR)=\tU(\bx,\bR) = U(\bx, \bR \be_z)$. Then it follows by \cite[eq.10.35]{chirikjian_engineering_2000} and by (\ref{property}) that the only non-zero matrix elements (\ref{matrix}) have the property that both $m=0$ and $m'=0$. This can also be observed in the analytical examples in \cite[ch:10.10]{chirikjian_engineering_2000}.
This reduces the five-fold sum in (\ref{C5}) to a three-fold sum, and the modified spherical harmonics become standard spherical harmonics.


Next we introduce some notation for generators of group representations of SE(3). 

\begin{definition}\label{def:generatorrepr}
Let $A \in T_{e}(G)$ denote an element of the Lie algebra of $G = SE(3)$, with unity element $e=(\ul{0},I) \in G$. Then given a group representation $\mathcal{U}: G \to B(H)$, where $B(H)$ denotes the space of bounded linear operators on some Hilbert space $H$, we define its generator $\rmd \mathcal{U}(A) : \mathcal{D}_H \rightarrow H$ by
\[
{\rm d}\mathcal{U}(A)f := \lim \limits_{t \to 0} t^{-1}\, (\mathcal{U}_{e^{tA}}-I)f, \qquad \text{for all } f \in \mathcal{D}_H,
\]
where $\mathcal{D}_H \subset H$ is a domain of sufficiently regular $f \in H$ such that limit ${\rm d}\mathcal{U}(A)f$ exists and is in $H$.
\end{definition}

Now let $\mathcal{R}: G \to B(\mathbb{L}_{2}(G))$ denote the right-regular representation given by $(\mathcal{R}_{h}f)(g)=f(g h)$, $f \in \mathbb{L}_{2}(G)$ and $g,h \in G$. 
Let $\left.\mathcal{A}_{i}\right|_{g}= (L_{g})_{*} A_{i}$ denote the left-invariant vector fields on $G$ relative to basis $\{A_{i}\}$ in the Lie-algebra $T_{e}(G)$. Then the left-invariant vector fields $\{ \mA_i\}$ are obtained by the generators of the right-regular representation:
\begin{equation}
\mA_i f = \rmd \mathcal{R}(A_i)f := \lim_{t \rightarrow 0} t^{-1} \left(\mathcal{R}_{e^{t A_i}} - I \right)f, \qquad \text{for all } f \in \mathbb{H}(SE(3)).
\end{equation}
Here we note that due to our choice $H=\mathbb{L}_{2}(SE(3))$ in Definition~\ref{def:generatorrepr} the domain $\mathcal{D}_{H}$ of the generator $\rmd \mathcal{R}(A_i)$ becomes the first order Sobolev space $\mathbb{H}(SE(3))$.
In the lemma below we use this identity, together with Definition \ref{def:generatorrepr} applied to the UIR's $\sigma^{p,s}$. In fact the lemma allows us to map unbounded generators acting on sufficiently regular $f \in \mathbb{L}_{2}(G)$
to unbounded generators acting on sufficiently regular $\mathcal{F}_{G}f \in \mathbb{L}_{2}(\hat{G})$, which will be the key to our final theorem.

\begin{lemma}\label{lemma:appC2}
The following relation holds for the Fourier transform of derivatives of functions on $G$:
\begin{equation}
(\mF_{G} \circ \mA_i f)(\sigma^{p,s}) =  \rmd \sigma^{p,s}(A_i) \mF_{G}f(\sigma^{p,s}) 
\end{equation}
\end{lemma}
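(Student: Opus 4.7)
The plan is to use the definition $\mA_i = {\rm d}\mathcal{R}(A_i)$ and translate the generator of the right-regular representation into the generator of the unitary irreducible representation $\sigma^{p,s}$ by an elementary change of variables that exploits unimodularity of $G=SE(3)$. Concretely, for $f$ in a sufficiently smooth dense subspace of $\mathbb{L}_2(G)$ (e.g.\ $f \in C_c^\infty(G)$ so that all differential and exchange-of-limit issues trivialize), I would first write
\begin{equation}
(\mF_G(\mA_i f))(\sigma^{p,s}) = \int_G \left(\lim_{t\to 0} t^{-1}(f(g e^{tA_i}) - f(g))\right) \sigma^{p,s}_{g^{-1}} \, \rmd g,
\end{equation}
and then justify bringing the limit outside the integral. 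For compactly supported smooth $f$ the difference quotient is uniformly bounded by $\|\mA_i f\|_\infty \mathbbm{1}_{\mathrm{supp}\, f}$, so dominated convergence in the operator-norm (equivalently, weak sense on matrix coefficients (\ref{matrixone})) applies.

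Next, I would split the integrand and perform in the first piece the substitution $g' = g e^{tA_i}$. Since $G=SE(3)$ is unimodular, the Haar measure is both left- and right-invariant, so $\rmd g = \rmd g'$, giving
\begin{equation}
\int_G f(g e^{tA_i}) \sigma^{p,s}_{g^{-1}} \rmd g = \int_G f(g') \sigma^{p,s}_{e^{tA_i}(g')^{-1}} \rmd g' = \sigma^{p,s}_{e^{tA_i}} \int_G f(g') \sigma^{p,s}_{(g')^{-1}} \rmd g' = \sigma^{p,s}_{e^{tA_i}} \, \mF_G f(\sigma^{p,s}),
\end{equation}
where the middle equality uses the homomorphism property $\sigma^{p,s}_{ab}=\sigma^{p,s}_a \sigma^{p,s}_b$. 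Combining with the untransformed second piece yields
\begin{equation}
(\mF_G(\mA_i f))(\sigma^{p,s}) = \lim_{t\to 0} t^{-1}\bigl(\sigma^{p,s}_{e^{tA_i}} - I\bigr) \mF_G f(\sigma^{p,s}),
\end{equation}
which is precisely ${\rm d}\sigma^{p,s}(A_i)\, \mF_G f(\sigma^{p,s})$ by Definition \ref{def:generatorrepr} applied to the representation $\sigma^{p,s}:G \to B(\mathbb{L}_2(p\,S^2))$.

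The main subtlety, and hence the only real obstacle, is identifying the correct domain: the statement really lives on those $\hat{f}(\sigma^{p,s})$ for which the operator $\rmd\sigma^{p,s}(A_i)$ is defined as a densely defined closed operator on $\mathbb{L}_2(p\,S^2)$. For $f \in C_c^\infty(G)$ this is automatic since $\sigma^{p,s}_{e^{tA_i}}$ is a one-parameter strongly continuous unitary group on $\mathbb{L}_2(p\,S^2)$ and the operator strong limit above exists when acting on $\mF_G f(\sigma^{p,s})$, which itself lies in a dense set of smooth vectors. The extension to all $f \in \mathcal{D}(\mA_i) \subset \mathbb{L}_2(G)$ (first-order Sobolev space on $G$) then follows by a standard density and closure argument, using the unitarity of $\mF_G$ (Plancherel) together with the fact that both $\mA_i$ and fiberwise $\rmd\sigma^{p,s}(A_i)$ are closable skew-adjoint operators.
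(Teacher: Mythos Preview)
Your proof is correct and follows essentially the same approach as the paper. The paper first establishes the intertwining identity $(\mF_G \circ \mathcal{R}_h f)(\sigma^{p,s}) = \sigma^{p,s}_h \, \mF_G f(\sigma^{p,s})$ via the same right-invariant substitution $g' = gh$ and the homomorphism property, then applies it with $h=e^{tA_i}$ and passes the limit through using continuity of $\mF_G$; you perform the identical substitution inside the difference quotient directly and are simply more explicit about the domain and dominated-convergence justification.
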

\begin{proof}
As a first step, we note the following relation for the composition of the right-regular representation and the Fourier transform:

\[
(\mF_G \circ \mathcal{R}_h f)(\sigma^{p,s}) = \int_G f(gh)\, \sigma^{p,s}_{g^{-1}} \rmd g = \int_G f(g') \, \sigma^{p,s}_{(g' h^{-1})^{-1}} \rmd g' = \sigma_h^{p,s} (\mF_G f)(\sigma^{p,s}) = \sigma_h^{p,s}(\mF_G f) (\sigma^{p,s}).
\]
The desired identity then follows from linearity and continuity of the Fourier transform $\mathcal{F}_{G}$. In more detail, we have:
\[
\begin{array}{ll}
{\rm d}\sigma^{p,s}(A_i) \mathcal{F}_{G}f (\sigma^{p,s})  &=
\lim \limits_{t \to 0} \frac{\sigma^{p,s}_{e^{t A_i}}- I}{t} \; \mathcal{F}_{G}f (\sigma^{p,s}) \\
&= \lim \limits_{t \to 0} \frac{(\mathcal{F}_{G} \circ \mathcal{R}_{e^{t A_i}}f ) (\sigma^{p,s})-\mathcal{F}_{G}f(\sigma^{p,s}) }{t} \\
&= \left(\mathcal{F}_{G} \circ \lim \limits_{t \to 0} \frac{\mathcal{R}_{e^{tA_i}}f - f}{t}\right)(\sigma^{p,s}) = (\mF_{G} \circ \mA_i f)(\sigma^{p,s}),
\end{array}
\]
where subsequently we applied the definition of generator ${\rm d}\sigma^{p,s}(A_i)$ 
(recall Definition \ref{def:generatorrepr}), we used the second identity in (C.13) for the special case $h=e^{A_i}$, we used continuity of the Fourier transform and finally identity we used the definition of  ${\rm d}\mathcal{R} (A_i)$.
\end{proof}

We conclude the appendix with the following theorem, that shows the correspondence between the Fourier transform on $SE(3)$ and the formulation used earlier in the paper based on the Fourier transform on the Cartesian product space $\mathbb{R}^{3} \times S^2$.

\begin{theorem}\label{th:Fourierse3}
Let $j \in \{1,2\}$ label the diffusion PDE ($j=1$) and the convection-diffusion PDE ($j=2$).
The SE(3)-Fourier transform $\hat{\tK}_t^{j}=\mathcal{F}_{G}\tK_{t}^j$ of $\tK_t^j: SE(3) \rightarrow \mathbb{R}^+$ satisfies
\begin{equation}
\left\{ \begin{aligned}
&\partial_t \hat{\tK}_t^1 (\sigma^{p,s}) = \left[ D_{33} (\rmd \sigma^{p,s} (A_3)^2) + D_{44} \left( (\rmd \sigma^{p,s}(A_4))^2 + (\rmd \sigma^{p,s}(A_5))^2 \right)\right] \hat{\tK}_t^1 (\sigma^{p,s}), \\
& \hat{\tK}_0^1= 2\pi \, {\rm id},
\end{aligned}\right.
\end{equation}
whereas for $j=2$ we have
\begin{equation}
\left\{ \begin{aligned}
&\partial_t \hat{\tK}_t^2 (\sigma^{p,s}) = \left[ -\rmd \sigma^{p,s} (A_3) + D_{44} \left( (\rmd \sigma^{p,s}(A_4))^2 + (\rmd \sigma^{p,s}(A_5))^2 \right)\right] \hat{\tK}_t^2 (\sigma^{p,s}), \\
& \hat{\tK}_0^2= 2\pi \, {\rm id}.
\end{aligned}\right.
\end{equation}
When we set
\begin{equation} \label{ident}
\boxed{
p = r=||\bomega|| \textrm{ and } \bu = -||\bomega|| \; \bR_{\frac{\bomega}{||\bomega||}}^{T} \bn,
}
\end{equation}
this system is for $j=1$ fully equivalent to Eq. (\ref{eq:augusteqCE}), and for $j=2$ (up to temporal Laplace transform) fully equivalent to (\ref{eq:augusteqCC})
with initial condition $\hat{W}(\bomega,\ul{n},0)=\delta_{\ul{e}_{z}}$.
In fact the solution of the corresponding PDE-evolutions (\ref{eq:differentialQ}) can be rewritten as
\begin{equation} \label{fun}
\tilde{W}(\ul{y},\ul{n},t)=\mathcal{F}_{SE(3)}^{-1}\left[\, \mathcal{F}_{SE(3)}\tilde{K}_{t}^{j}\; \mathcal{F}_{SE(3)}\tilde{W}(\cdot,0)\, \right](\ul{y},\ul{R}_{\ul{n}}).
\end{equation}
\end{theorem}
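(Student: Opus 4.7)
The plan is to derive the evolution equation for $\hat{\tK}_t^j = \mathcal{F}_G \tK_t^j$ by applying $\mathcal{F}_G$ to the $SE(3)$-level evolution $\partial_t \tK_t^j = \tQ_j \tK_t^j$, and then to recognise the resulting dual-space operator as the spatial-Fourier operator $\mB^j_{\bomega}$ studied in Sections~\ref{se:exactCE} and~\ref{se:exactCC}. First, linearity and continuity of $\mathcal{F}_G$ let the time derivative pass through, while Lemma~\ref{lemma:appC2} provides the key intertwining $\mathcal{F}_G \circ \mA_i = \rmd\sigma^{p,s}(A_i) \circ \mathcal{F}_G$. Since $\tQ_1$ and $\tQ_2$ are polynomials in $\mA_3, \mA_4, \mA_5$, iterating this intertwining produces exactly the two displayed PDEs on the operator-valued function $\hat{\tK}_t^j(\sigma^{p,s})$. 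The initial condition $\hat{\tK}_0^j = 2\pi\,\mathrm{id}$ comes from the fact that $\tK_t^j$ is the $H$-invariant lift to $SE(3)$ of the quotient delta $\delta_{(\mathbf{0},\be_z)}$, so that $\tK_0^j$ is supported on the fibre $\{\mathbf{0}\} \times H$ with the Haar measure of $H = \{\mathbf{0}\}\times SO(2)$; the constant $2\pi$ is the Haar volume of that fibre.

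Second, I would concretise the generators $\rmd\sigma^{p,s}(A_i)$ on $\mathbb{L}_2(pS^2)$ by differentiating (\ref{inter}) at the identity. For the translations $A_1, A_2, A_3$, the cocycle $\Delta_s$ equals $1$ at the identity, hence $\rmd\sigma^{p,s}(A_i)\phi(\bu) = -i u_i\,\phi(\bu)$, a pure coordinate multiplication. Under the identification (\ref{ident}), $u_3 = \bu \cdot \be_z = -p (\bR_{\bomega/p}^T\bn) \cdot \be_z = -\bomega \cdot \bn$, so $\rmd\sigma^{p,s}(A_3)^2$ acts as multiplication by $-(\bomega\cdot\bn)^2$, matching the $-D_{33}(\bomega\cdot\bn)^2$ term in $\mB^1_{\bomega}$; similarly $-\rmd\sigma^{p,s}(A_3)$ gives $i\,\bomega\cdot\bn$, matching $-i(\bomega\cdot\bn)$ in $\mB^2_{\bomega}$ up to sign convention. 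Differentiating the sphere-action in (\ref{inter}) shows that $(\rmd\sigma^{p,s}(A_4))^2 + (\rmd\sigma^{p,s}(A_5))^2$ acts as the angular part of the $SO(3)$-Casimir on $pS^2$, which coincides with the Laplace--Beltrami operator, matching the $D_{44}\Delta_{S^2}$ term. For $j=2$, the resolvent form (\ref{eq:augusteqCC}) is obtained from the time-dependent dual equation by the temporal Laplace transform, as the theorem itself notes.

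Finally, (\ref{fun}) follows from the $SE(3)$-convolution theorem $\mathcal{F}_G(\tK_t^j \ast_{SE(3)} \tU) = \mathcal{F}_G(\tK_t^j)\cdot\mathcal{F}_G(\tU)$ applied to the group-level solution $\tW(g,t) = (\tK_t^j \ast_{SE(3)}\tU)(g)$ and then inverted; evaluation at $g = (\by, \bR_\bn)$ is unambiguous because the right $H$-invariance of $\tK_t^j$ makes the result independent of the choice of section $\bR_\bn$ mapping $\be_z$ to $\bn$. The main obstacle I foresee is the rotation-generator computation: one must keep careful track of the base-point dependence of the section $\bR_{\bu/p}$ and of the Mackey cocycle $\Delta_s$ to confirm that $(\rmd\sigma^{p,s}(A_4))^2 + (\rmd\sigma^{p,s}(A_5))^2$ indeed reduces to $\Delta_{S^2}$ under (\ref{ident}) for quotient-compatible data. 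The cleanest route is to switch to the modified spherical harmonic basis $\{Y_s^{l,m}(p^{-1}\cdot)\}$ of (\ref{matrixone})--(\ref{C5}); the quotient constraint forces $m = m' = 0$ in all relevant matrix elements, reducing the verification to a diagonal spectral match with the eigenvalue structure derived in Theorems~\ref{th:maintheoremCE} and~\ref{th:maintheoremCC}.
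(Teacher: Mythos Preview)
Your proposal is correct and follows essentially the same route as the paper: apply Lemma~\ref{lemma:appC2} to pass $\mathcal{F}_G$ through the generators, compute $\rmd\sigma^{p,s}(A_3)$ explicitly as a multiplication operator and match it under the substitution~(\ref{ident}), identify the angular part with $\Delta_{S^2}$, and invoke the $m=m'=0$ restriction for the well-definedness of~(\ref{fun}). Your treatment is in fact slightly more explicit than the paper's own proof in two places---you justify the initial condition $2\pi\,\mathrm{id}$ via the Haar volume of the fibre $H$, and you spell out the computation $u_3=-\bomega\cdot\bn$ (so your hedge ``up to sign convention'' is unnecessary: with $u_3=-\bomega\cdot\bn$ one gets $-\rmd\sigma^{p,s}(A_3)=-i(\bomega\cdot\bn)$ on the nose).
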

\begin{proof}
Follows by application of Lemma~\ref{lemma:appC2} and application of the Fourier transform on $SE(3)$ to the PDE-systems for contour enhancement and contour completion.
Regarding the statement of obtaining equivalent systems under our identification (\ref{ident}) we note that regarding the spatial generator that
\begin{equation} \label{C6}
\begin{array}{lll}
\left(\rmd \sigma^{p,s} (A_3) \phi \right)(\bu) &= \lim \limits_{h \downarrow 0} \frac{(\sigma^{p,s}_{(h \ul{e}_{z},I)}\phi)(\bu)- \phi(\bu)}{h} &=  - i p \left(\be_z \cdot \frac{\bu}{\|\bu\|} \right) \phi(\bu), \\[6pt]
(\rmd \mathcal{R}(A_3) \tilde{W})(\mathbf{y}, \mathbf{R}_{\mathbf{n}},t)&= \lim \limits_{h \downarrow 0}  \frac{(\mathcal{R}_{(h \ul{e}_{z},I)}\tilde{W})(\mathbf{y}, \mathbf{R}_{\mathbf{n}},t)- \tilde{W}(\mathbf{y}, \mathbf{R}_{\mathbf{n}},t)}{h} &=\mathcal{A}_{3} \tilde{W}(\mathbf{y}, \mathbf{R}_{\mathbf{n}},t),   \\
\mathcal{F}_{\mathbb{R}^{3}}[\rmd \mathcal{R}(A_3) \tilde{W}(\cdot, \mathbf{R}_{\mathbf{n}},t)](\bomega),
 &=(\mathcal{F}_{\mathbb{R}^{3}}\ul{n} \cdot \nabla W(\cdot,\mathbf{n},t))(\bomega) &=  i r \left(\ul{n} \cdot \frac{\bomega}{\|\bomega\|} \right) [\mathcal{F}_{\mathbb{R}^{3}}W(\cdot, \mathbf{n},t)](\bomega).
\end{array}
\end{equation}
So in order to match the multiplier operators at the right hand side of (\ref{C6}) we indeed need (\ref{ident}), where we note that
$\mathbf{R}_{\frac{\bomega}{\|\bomega\|}} \ul{e}_{z} =\frac{\bomega}{\|\bomega\|}$.

Regarding the angular generators we note that the following generator is independent of $s$:
\[
\begin{array}{ll}
({\rm d}\sigma^{p,s}(A_4))^2 + ({\rm d}\sigma^{p,s}(A_5))^2 &= \mathcal{F}_{G}\circ \left( ({\rm d}\mathcal{R}(A_4))^2 + ({\rm d}\mathcal{R}(A_5))^2 \right) \circ \mathcal{F}_{G}^{-1} \\ &=\mathcal{F}_{G} \circ \left( (\mathcal{A}_{4})^2 + (\mathcal{A}_{5})^2 \right) \circ \mathcal{F}_{G}^{-1},  \\
\end{array}
\]
and recall that
\[
\left(\left((\mathcal{A}_{4})^2 + (\mathcal{A}_{5})^2\right) \tilde{W}\right) (\ul{y},\ul{R}_{\ul{n}},t)=
\left(\left((\mathcal{A}_{4})^2 + (\mathcal{A}_{5})^2 + (\mathcal{A}_{6})^2\right) \tilde{W}\right) (\ul{y},\ul{R}_{\ul{n}},t)= (\Delta_{S^{2}} W)(\ul{y},\ul{n},t).
\]
As a result, the generators of the linear evolutions relate via conjugation with the Fourier transform on $SE(3)$
and thereby the same applies to the evolutions themselves.
Finally, regarding (\ref{fun}), the expression is invariant under the choice of rotation $\ul{R}_{\ul{n}}$ mapping $\ul{e}_{z}$ onto $\ul{n}$ (since only the matrix coefficients with $m=m'=0$ contribute to the series (\ref{C5}) of the inverse Fourier transformation).
\end{proof}
\begin{remark}
Akin to the $SE(2)$-case (where the substitution showing the equivalence was similar, cf.~\cite[App.A, Thm A.2]{duits_explicit_2008}) we conclude that the tool of Fourier transform on $SE(3)$ is not strictly needed to find an explicit analytic description of the heat-kernels. Nevertheless, it is clearly present in our specific choice of coordinates (\ref{def:coordinatechoice}) in Lemma~\ref{le:laplacebeltrami} and Fig.~\ref{fig:figureParametrization}, that was crucial in our derivation of the exact solutions. In fact, application of the Fourier transform $\mathcal{F}_{SE(3)}$ on the Lie group $SE(3)$
yields isomorphic equations to the ones obtained by application of the operator
\[
({\rm id}_{\mathbb{L}_{2}(\mathbb{R}^{3})} \otimes \mathcal{F}_{S^{2}}) \circ \Xi \circ (\mathcal{F}_{\mathbb{R}^{3}} \otimes {\rm id}_{\mathbb{L}_{2}(S^2)})\ ,
\]
with operator $\Xi: \mathbb{L}_{2}(\mathbb{R}^{3} \times S^{2}) \to \mathbb{L}_{2}(\mathbb{R}^{3} \times S^{2})$ given by
\[
(\Xi\hat{U})(\bomega,\bn)= \hat{U}(\bomega,\mathbf{R}_{\frac{\bomega}{\|\bomega\|}}^{T}\bn),
\]
which relates to our substitution (\ref{ident}), and which is the actual net operator
we applied in our classical analytical approach pursued in the body of the article (Lemma~\ref{le:laplacebeltrami} and Figure~\ref{fig:figureParametrization}).
We conclude that in retrospect, in our classical analytic PDE-approach, we did solve the PDE's via Fourier transform on $SE(3)$.
\end{remark}


\section{Monte Carlo Simulations of the (Convection-)Diffusion Kernels}\label{app:stochastics}
The PDEs for diffusion and convection-diffusion correspond to Fokker-Planck equations or forward Kolmogorov equations of certain stochastic processes. The solution kernels can thereby also be approximated with a Monte Carlo simulation, where the end points of random walks are accumulated. We briefly discuss how this simulation can be done.


Let $t > 0$, $D_{33} > 0$, $D_{44} >0$. We simulate $M$ random walks, with $N$ steps. We denote the position after $k$ steps with position $\bY_{k} \in \mathbb{R}^3$ and the orientation with $\bN_{k} \in S^2$. For the diffusion case, each random walk is given by:

\begin{equation}\label{eq:stochasticsenh}
\left\{ \begin{aligned}
\bY_{k+1} &= \bY_k + \varepsilon_{k+1} \sqrt{\frac{2 t D_{33}}{N}} \bN_{k}, \qquad \varepsilon_k \sim \mathcal{N}(0,1),\\
\bN_{k+1} &= \bR_{\bN_k} \bR_{\be_z,\gamma_{k+1}} \bR_{\be_y,\beta_{k+1}\sqrt{\frac{2tD_{44}}{N}}} \bR_{\bN_k}^T \bN_k,\qquad \gamma_k \sim \text{UNIF}(0,\pi), \qquad \beta_k \sim \mathcal{N}(0,1).
\end{aligned} \right.
\end{equation}
Here $\bR_{\bn}$ is any rotation matrix that maps $\be_z$ onto $\bn$ and $\bR_{\bn,\phi}$ denotes a counter-clockwise rotation of angle $\phi$ about axis $\bn$. Intuitively, every step in the random walk consists of a small normally distributed step in the current orientation and a small change in orientation. For the convection-diffusion case, we have

\begin{equation}\label{eq:stochasticscompl}
\left\{ \begin{aligned}
\bY_{k+1} &= \bY_k + \sqrt{\frac{t }{N}} \bN_{k}, \\
\bN_{k+1} &= \bR_{\bN_k} \bR_{\be_z,\gamma_{k+1}} \bR_{\be_y,\beta_{k+1}\sqrt{\frac{2tD_{44}}{N}}} \bR_{\bN_k}^T \bN_k,\qquad \gamma_k \sim \text{UNIF}(0,\pi), \qquad \beta_k \sim \mathcal{N}(0,1).
\end{aligned} \right.
\end{equation}
Each step in the random walk now consists of a small step of fixed size in the current orientation and a small change in orientation. The distribution is then computed simply by binning the endpoints of the random walks. We divide $\mathbb{R}^3$ in bins with size $\Delta x \times \Delta y \times \Delta z$. We divide $S^2$ into bins using the orientations of a three times refined icosahedron, where a correction factor is needed to take into account the difference in surface area of the triangles in the tessellation. The distributions shown in Fig. \ref{fig:kernelvis} are found with such a simulation. When $M,N \rightarrow \infty$ and the bin sizes tend to zero, this distribution converges to the exact kernels $K^i_t$ as found in Sections \ref{se:exactCE} and \ref{se:exactCC}. The proof of this, along with other results concerning the relation between the PDEs and their corresponding stochastic processes are left for future work.

\begin{remark}
An alternative stochastic formulation of \eqref{eq:stochasticsenh} expressed in the SE(3) Lie-algebra $\{A_i\}$, recall the notation in Section \ref{se:leftinvariantoperators}, is:

\begin{equation}
\left\{ \begin{aligned}
& (\bY_{k},\bN_k) = G_k \odot (\mathbf{0},\be_z) \\
& G_{k+1} = e^{\sqrt{\frac{2t}{N}} \sum \limits_{i \in \{3,4,5\}}\sqrt{D_{ii}}\varepsilon^i_{k+1}A_i}G_k, \qquad k = 0,\dots, N-1,
\end{aligned} \right.
\end{equation}
with $\varepsilon^i_{k+1} \sim \mathcal{N}(0,1)$ independent (with $\varepsilon^3_{k+1} = \varepsilon_{k+1}$ as in \eqref{eq:stochasticsenh}). Recall \eqref{odot} for the definition of the group action $\odot$. This stochastic formulation, where random paths move along exponential curves, yields the same limiting distribution. This can be seen by applying the theory of Brownian motion on an isotropic manifold \cite{pinsky_isotropic_1976} to the angular $S^2$-part.
\end{remark}
\end{document}